\def\kk{\Bbbk}
\newcommand{\GL}{\mathbf{GL}}
\newcommand{\ot}{\otimes}
\newcommand{\Ind}{\operatorname{Ind}}
\newcommand{\ord}{\operatorname{ord}}
\newcommand{\com}{\Delta}
\newcommand\toba{{\mathfrak B }}
\newcommand{\gr}{\operatorname{gr}}
\newcommand{\Lc}{{\mathcal L}}
\newcommand{\eps}{\varepsilon}
\newcommand{\K}{{\mathcal K}}
\newcommand{\Z}{{\mathbb Z}}
\newcommand{\N}{{\mathbb N}}
\newcommand{\M}{{\mathcal M}}
\def\ot{\otimes}
\def\mB{\mathcal{B}}
\def\mA{\mathcal{A}}
\def\mJ{\mathcal{J}}
\def\mC{\mathcal{C}}
\def\mP{\mathcal{P}}
\newcommand{\m}{\mathcal{M}}
\newcommand{\II}{\mathcal{I}}
\newcommand{\T}{{\mathcal T}}
\newcommand{\Oc}{{\mathcal O}}
\newcommand{\oc}{{\mathcal O}}
\newcommand{\ydh}{{}^H_H\mathcal{YD}}
\newcommand{\Alg}{\operatorname{Alg}}
\newcommand\Hom{\operatorname{Hom}}
\newcommand{\Ga}{\Gamma}
\newcommand\co{\operatorname{co}}
\newcommand{\ydhs}{{}^{H^{*}}_{H^{*}}\mathcal{YD}}
\newcommand{\ydao}{{}^{A_0}_{A_0}\mathcal{YD}}
\newcommand{\ydga}{{}^{\Ga}_{\Ga}\mathcal{YD}}
\newcommand{\yddm}{{}^{\dm}_{\dm}\mathcal{YD}}
\newcommand{\ydkm}{{}^{\Bbbk^{\dm}}_{\Bbbk^{\dm}}\mathcal{YD}}
\newcommand{\ydkga}{{}^{\Bbbk^{\Ga}}_{\Bbbk^{\Ga}}\mathcal{YD}}
\def\Zbt2{\tilde{\mathrm{Z}}_b^2}
\def\Hbt2{\tilde{\mathrm{H}}_b^2}
\def\Bbt2{\tilde{\mathrm{B}}_b^2}
\def\Zbh2{\widehat{\mathrm{Z}}_b^2}
\def\Hbh2{\widehat{\mathrm{H}}_b^2}
\def\Bbh2{\widehat{\mathrm{B}}_b^2}
\def\pac{\partial^c}
\def\pa{\partial}
\def\pah{\partial^h}
\def\pab{\partial^b}
\def\ep{\varepsilon}
\def\De{\Delta}
\def\m{\mathrm{m}}
\def\ot{\otimes}
\def\Zep2{\mathrm{Z}^2_\ep}
\def\Hep2{\mathrm{H}^2_\ep}
\def\yd{\mathcal{YD}}
\def\ydh{{_H ^H\hskip -2pt \yd}}
\def\T{\operatorname{T}}
\def\M{\operatorname{M}}
\def\id{\operatorname{id}}
\def\Hom{\operatorname{Hom}}
\theoremstyle{plain}
\newtheorem{lema}{Lemma}[section]
\newtheorem{theorem}[lema]{Theorem}
\newtheorem{cor}[lema]{Corollary}
\theoremstyle{definition}
\newtheorem{definition}[lema]{Definition}
\newtheorem{defn}[lema]{Definition}
\newtheorem{exa}[lema]{Example}
\newtheorem{notation}[lema]{Notation}
\theoremstyle{remark}
\newtheorem{obs}[lema]{Remark}
\theoremstyle{plain}
\newcounter{maint}
\newtheorem{mainthm}[maint]{Theorem}
\theoremstyle{plain}
\def\ep{\varepsilon}
\newcommand\dm{\mathbb D_m}
\newcommand\kdm{\kk^{\dm}}
\newcommand\A{\mathbb A}
\newcommand\s{\mathbb S}
\newcommand{\cS}{\mathcal{S}}
\def\pf{\begin{proof}}
\def\epf{\end{proof}}
\theoremstyle{remark}
\begin{document}

\renewcommand{\baselinestretch}{1.2}

\thispagestyle{empty}

\title[Copointed Hopf algebras over dihedral groups]
{On finite-dimensional copointed Hopf algebras \\ over dihedral groups}

\author[F. Fantino, G. A. Garc\'ia, M. Mastnak]
{Fernando Fantino, Gast\'on Andr\'es Garc\'ia and Mitja Mastnak}

\thanks{This work was partially supported by ANPCyT-Foncyt, CONICET, Secyt (UNLP, UNC), NSERC (Canada)}

\address{\newline\noindent Departamento de Matem\'atica, Facultad de Ciencias Exactas,
Universidad Nacional de La Plata. CONICET. C.C. 172, (1900) La Plata, Argentina.}
\email{ggarcia@mate.unlp.edu.ar}

\address{\newline\noindent Facultad de Matem\'atica, Astronom\'\i a, F\'\i sica y Computaci\'on,
Universidad Nacional de C\'ordoba. CIEM -- CONI\-CET. Medina Allende s/n, Ciudad Universitaria 5000 C\'ordoba, Argentina}
\email{fantino@famaf.unc.edu.ar}

\address{\newline\noindent Department of Mathematics and C.S.
Saint Mary's University Halifax, NS B3H 3C3, Canada.}
\email{mmastnak@cs.smu.ca}

\subjclass[2010]{16T05}
\keywords{Nichols algebras, Hopf algebras, Dihedral group, deformations.}
\date{\today}

\begin{abstract}
We classify all finite-dimensional Hopf algebras over an algebraically
closed field of characteristic zero such that its coradical is isomorphic to the algebra
of functions $\kdm$ over a dihedral group $\dm$, with $m=4a\geq 12$.
We obtain this classification by means of the lifting method, where we use cohomology theory
to determine all possible deformations.
Our result provides an infinite family of new examples
of finite-dimensional copointed Hopf algebras over dihedral groups.
\end{abstract}

\maketitle

\section*{Introduction}
Let $\Bbbk$ be an algebraically closed field of characteristic zero.
This paper contributes to the classification of finite-dimensional Hopf algebras over $\Bbbk$.
We address particularly the case where the coradical of the Hopf algebra is the dual of the group algebra
of a non-abelian group $G$.
This kind of algebras are called \emph{copointed} Hopf algebras over $G$.

Let $A$ be a copointed Hopf algebra over a finite group $G$. We say that $A$ is \emph{trivial}
if $A\simeq \Bbbk^{G}$ as Hopf algebras.
It is known that all copointed Hopf algebras over $G$ are trivial if $G$ is an alternating group $\A_n$,
$n\geq 5$, \cite{AFGV-alt},
a sporadic group different from $Fi_{22}$, $B$, $M$, \cite{AFGV-spor}, or a finite projective special
linear group $PSL_n(q)$ for a certain infinite family of pairs $(n,q)$ \cite{ACG}.
On the other hand, there are few non-trivial examples and the complete classification with
non-trivial examples is known only
for the symmetric groups $\s_3$ and $\s_{4}$, see \cite{AV} and \cite{GIV2} respectively, and non-abelian groups
attached to affine racks \cite{GIV}.
In this paper we give the complete classification of copointed Hopf algebras over the
dihedral groups $\dm$, with $m=4a\geq 12$. Throughout the paper $\dm$ will denote the dihedral group of order $2m$.

The case of the dihedral groups was previously treated in \cite{CDMM}, where a particular case is studied.
In this case, the associated graded algebra $\gr A$ is isomorphic to a bosonization of a quantum
line over $\Bbbk^{\mathbb{D}_n}$
with $n\geq 2$. The authors also provide the description in the case where the coradical is the function algebra
over a dicyclic group and when the coradical belongs
 to a certain family of cosemisimple non-pointed self-dual Hopf algebras.

The question of classifying Hopf algebras over $\Bbbk$ is a difficult problem to attack.
One of the main reasons lies on the lack of general methods.
The best approach to study Hopf algebras with the Chevalley property,
\emph{i.~e.}, the coradical is a Hopf subalgebra,
is the lifting method developed by Andruskiewitsch and Schneider \cite{AS}.
Any Hopf algebra $A$ has a coalgebra filtration $\{A_{ n } \}_{n\geq 0} $,
called \textit{the coradical filtration}, whose first term is the
\emph{coradical} $A_0$. It
corresponds to the filtration of $A^{*}$ given by the powers of the Jacobson radical. If $A_{ 0}$ is a
Hopf subalgebra, then $\{A_{ n } \}_{n\geq 0} $ is also an algebra filtration; in particular,
its associated graded object $\gr A = \bigoplus_{ n\geq 0} A_{ n} /A_{ n-1}$ is a graded Hopf
algebra, where  $A_{-1} = 0$. Let $\pi : \gr A \to A_{ 0}$ be the homogeneous projection.
It turns out that $\gr A \simeq B\# A_{0}$ as Hopf algebras, where
$B = (\gr A)^{\co \pi} = \{a\in A:\ (\id\ot \pi)\com(a)= a\ot 1\}$ and
$\#$ stands for the Radford-Majid biproduct or \textit{bosonization} of $B$ with $A_{0}$.
Here $B$ is not a usual Hopf algebra, but a graded connected Hopf algebra in
the category $\ydao$ of Yetter-Drinfeld modules over $A_{0}$. It contains the
algebra generated by the elements of degree one, called the \textit{Nichols algebra} $\toba(V)$  of $V$; here
$V=B^{1}$ is a braided vector space whose braiding $c:V\ot V \to V\ot V$ is called the \textit{infinitesimal braiding} of $A$.

Let $H$ be a finite-dimensional cosemisimple Hopf algebra. The main steps to
determine all finite-dimensional Hopf algebras $A$ with coradical $A_0\simeq H$, in terms of the lifting method, are:
\begin{enumerate}
\item[$(a)$] determine all $V\in\ydh$ such that the
Nichols algebra $\toba(V)$ is finite-dimensional.
\item[$(b)$] for such $V$, compute all Hopf algebras $A$ such
that $\gr A\simeq \toba(V)\# H$. We call $A$ a \emph{lifting}
of $\toba(V)$ over $H$.
\item[$(c)$] prove that if $A$ is any finite-dimensional Hopf algebra such that $A_0\simeq H$ and
$\gr A \simeq B\#H$ with $B$ a braided Hopf algebra in $\ydh$, then $B\simeq \toba(V)$ for some
$V\in \ydh$.
\end{enumerate}
By \cite[Proposition 2.2.1]{AG}, we have that $\ydh\simeq \ydhs$ as braided tensor categories.
Hence, the questions posed in previous steps $(a)$ and $(c)$ can be answered either in
the category $\ydh$ or in the category $\ydhs$.
Since all pointed Hopf algebras over $\dm$ were classified in \cite{FG}
by means of the lifting method, to study copointed Hopf algebras over $\dm$
we have to deal only with step (b) and (c). To describe all possible liftings,
we use the invariant Hochschild cohomology of the Nichols algebra. As a consequence,
it turns out that all liftings are cocycle deformations.

Recently, it was proved in \cite{AGI} that all liftings of Nichols algebras of diagonal type over a
cosemisimple Hopf algebra $H$ can be obtained as a $2$-cocycle deformation of the bosonization
$\toba(V)\# H$, provided $V$ is a
principal realization in $\ydh$ and the Nichols algebra $\toba(V )$ is
finitely presented. In particular, all liftings may be described by generators and relations
using the strategy developed in \cite{AAGMV}. Although in our case
the braiding is diagonal and $\kdm$ is cosemisimple,
the infinitesimal braidings can not be described as principal realizations in $\ydkm$, since
$\kdm$ does not coact homogeneously. Hence, the copointed Hopf algebras we introduce might not be obtained
directly from their method. The strategy developed in \cite{GIV2} seems to be more appropriate.


A \textit{formal graded deformation} of a graded bialgebra $A$ over $\Bbbk[t]$,
consists of a $\Bbbk[t]$-bilinear multiplication
$m_{t} = m + tm_{1} + t^{2}m_{2} + \cdots$
and a comultiplication $\com_{t} = \com + t\com_{1} + t^{2}\com_{2}+ \cdots$
with respect to which the $\Bbbk[t]$-module $A[t] := A \ot_{\Bbbk} \Bbbk[t]$ is again a
graded bialgebra (where the degree of $t$ is $1$).   At first glance it might seem that the coefficients are
in the formal power series $\Bbbk[[t]]$ (as is the case in the ``ungraded" version of the theory), but the fact that structure maps $m_t$, $\com_t$,
are homogeneous of degree $0$, yields that for every $i$, the maps $m_i, \com_i$ are homogeneous of degree $-i$, and hence
the images of $A$ and $A\otimes A$ under $m_t$ and $\com_t$ always lie in $A[t]$ and $(A\otimes A)[t]$.
For the sake of simplicity let us now assume that the deformation is a preferred deformation in the sense that $\com_t=\com$
(formal graded deformations of graded bialgebras satisfying the assumptions of Theorem A are equivalent to such deformations).
Given such a deformation of $A$, let $r$ be the smallest
positive integer for which $m_{r}\neq 0$ (if such an $r$ exists).
Then $(m_{r}, 0)$ is a $2$-cocycle in the bialgebra cohomology
$\Zbh2(A)$ called an \emph{infinitesimal deformation}. Every nontrivial deformation
is equivalent to one for which the corresponding
$(m_{r}, 0)$ represents a nontrivial cohomology class, see \cite{DCY} (see also \cite{GS} for the ungraded version).
Conversely, given a
positive integer $r$ and a $2$-cocycle
$(m', 0)$ in $\Zbh2(A)$, $m + t^{r}m'$
is an associative multiplication on the \textit{$r$-deformation} $A[t]/(t^{r+1})$ of $A$, making it
a bialgebra over $\Bbbk[t]/(t^{r+1})$.
There may or may not exist $m_{r+1}, m_{r+2}, \ldots$ for which
$m + t^{r}m' + t^{r+1}m_{r+1} + t^{r+2}m_{r+2} + \cdots$
endows $A[t]$ with a bialgebra structure over $\Bbbk[t]$.
In principle it is hard to know if an $r$-deformation corresponds to a formal deformation.

Clearly, any formal deformation $U$ of a graded bialgebra $A$ is a lifting in the sense that $\gr U \simeq A$. Conversely,
it is well-known that any lifting corresponds to a formal deformation, see for example
\cite[Theorem 2.2]{DCY}. If $\sigma:A\ot A \to \Bbbk$ is a multiplicative $2$-cocycle on $A$, see
Subsection \ref{subsec:def-cocycles}, then the bialgebra $A_{\sigma}$ is a lifting of $A$ and
whence corresponds to a formal deformation whose infinitesimal part $\sigma_{r}$ is a
Hochschild $2$-cocycle, see Section \ref{sec:cohomology}.

Assume $A=B\# H$ is a bosonization of a quadratic Nichols algebra $B$ over $H$. If
the braiding in $\ydh$ is symmetric, then any $r$-deformation given by a $2$-cocycle
$(m', 0)$ in $\Zbh2(A)$ corresponds to a formal deformation, since $m'$ is
the infinitesimal part of the multiplicative $2$-cocycle
given by the exponentiation $\sigma=e^{m'}$ see \cite[Corollary 2.6]{GM}.
Hence, to find all possible deformations it suffices to describe
the set $ \Hbh2(A)$.
For more details on bialgebra deformations see \cite{GS}, \cite{DCY}, \cite{MW}.

We will restrict our attention to the $H$-bitrivial part of cohomology, \emph{i.~e.},
pairs $(f,g)$ where $f|_{A_0\ot A}=0=f|_{A\ot A_0}$ and $(\pi\ot\id)g=0=(\id\ot\pi)g$,
where $\pi\colon A\to A_0$ denotes the canonical projection.
We denote the corresponding sets by $\Zbt2(A)$ and $\Bbt2(A)$,
and the relevant cohomology by
$\Hbt2(A)=\Zbt2(A)/\Bbt2(A)$.
If $H= A_{0}$ is a group algebra or a  dual of a group algebra, then by \cite[Lemma 2.3.1]{MW},
it follows that $\Hbt2(A)=\Hbh2(A)$.
For $\ell\in\mathbb{Z}$, we denote the relevant graded cohomology by
$
\Hbt2(A)_\ell=\Zbt2(A)_\ell/\Bbt2(A)_\ell$.
Let $\Hep2(B,\kk)$ denote the Hochschild 2-cohomology group of $B$ and $\Hep2(B,\kk)^H$ the $H$-stable part.
Under our hypothesis, it holds that there exists a connecting homomorphism
$\tilde{\pa}_\ell\colon \Hep2(B,\kk)^H_\ell\to \Hbt2(A)_\ell$.
Suppose $B=\T(V)/I$
for some space $V\in\ydh$ and
an ideal $I\subseteq \T(V)_{(2)}=\bigoplus_{n\ge 2} V^{\otimes n}$.  Set
$\M(B)= I/\big(T(V)^+ I + I T(V)^+\big)$ and
denote by $\yd(\M(B),V)_\ell$ the $\ell$-homogeneous morphisms between $M(B)$ and $V$ in $\ydh$.
As a particular case of our first result we obtain the following theorem that generalizes
\cite[Theorem 6.2.1]{MW}.

\begin{mainthm}\label{thm:M(B)V}
Let $\ell<0$ and $r=-\ell$.  If $\yd(\M(B),\mP(B))_\ell=0$ and $B$ is generated as an algebra by elements of degree at most $r$, then
$\tilde{\pa}_\ell\colon \Hep2(B,\kk)^H_\ell\to \Hbt2(A)_\ell$ is surjective.
\end{mainthm}

This implies that all homogeneous
$2$-cocycles $(m',0) \in  \Hbt2(A)_2 $, and consequently also all deformations,
can be described using $H$-invariant Hochschild $2$-cocycles
on $B$, see Remarks \ref{rmk:formula-def-gen} and \ref{rmk:formula-def-part}.

Using the machinery described above, we obtain our main result. It turns out that all finite-dimensional Hopf algebras with coradical
$A_{0}$ isomorphic to $\kdm$ are liftings of bosonization of Nichols algebras over semisimple Yetter-Drinfeld modules.
These semisimple Yetter-Drinfeld modules are given by $M_{I}=\bigoplus_{1\leq j\leq r}M_{(i_{j},k_{j})}$,
$M_{L} = \bigoplus_{1\leq i \leq r} M_{\ell_{i}}$ and $M_{I,L} = M_{I} \oplus M_{L}$, and are parametrized by sets $I$, $L$ of
tuples satisfying certain properties, see Subsection \ref{subsec:ydmodkdm}. All Nichols algebras over these modules are isomorphic to exterior algebras.
The liftings are then encoded by families of parameters $\zeta_{I}$, $\mu_{L}$, $\nu_{L}$, $\tau_{L}$ related to this description.

\begin{mainthm}\label{thm:haoverkdm}
Let $A$ be a finite-dimensional Hopf algebra with coradical $A_{0}$ isomorphic to $\kdm$ with $m=4a\geq 12$.
Then $A$ is isomorphic to one of the following Hopf algebras:
\begin{enumerate}
 \item[(i)] $\mA(\zeta_{I})$ for some $I\in \II$ and some lifting data $\zeta_{I}$,
 \item[(ii)] $\mB(\mu_{L},\nu_{L},\tau_{L})$ for some $L\in \Lc$ and
 some lifting data $(\mu_{L},\nu_{L},\tau_{L})$,
 \item[(iii)] $\mC(\zeta_{I},\mu_{L},\nu_{L},\tau_{L})$ for some
 $(I,L)\in \K$ and some lifting data $(\zeta_{I},\mu_{L},\nu_{L},\tau_{L})$.
 \end{enumerate}
 Conversely, any Hopf algebra above is a lifting of a finite-dimensional Nichols algebra in
 $\ydkm$.
\end{mainthm}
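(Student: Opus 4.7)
The plan is to apply the lifting method of Andruskiewitsch and Schneider in the dual setting, combined with the cohomological description of deformations provided by Theorem~\ref{thm:M(B)V}. Since $\kdm$ is cosemisimple, the coradical filtration $\{A_n\}_{n\ge 0}$ is a Hopf algebra filtration, and $\gr A \simeq R\#\kdm$ for a graded connected braided Hopf algebra $R$ in $\ydkm$. By \cite[Proposition 2.2.1]{AG} there is an equivalence $\ydkm \simeq \ydg$ of braided tensor categories with $G=\dm$, so the classification of finite-dimensional Nichols algebras in $\ydkm$ is reduced to the one carried out in \cite{FG} as part of the pointed classification over $\dm$. This yields a list of possible infinitesimal braidings $V\in\ydkm$ with $\dim\toba(V)<\infty$, indexed by the combinatorial sets $\II$, $\Lc$ and $\K$ described in Section~\ref{sec:pointed-dihedral}; these correspond respectively to the three cases (i), (ii) and (iii) of the statement.

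Fix one such Nichols algebra $B=\toba(V)$ and set $A_0=B\#\kdm$. Any $A$ with $\gr A\simeq A_0$ is a formal deformation of $A_0$, and by \cite{GM} all relevant deformations arise from multiplicative $2$-cocycles $\sigma=e^{m'}$ where $m'$ is an infinitesimal $2$-cocycle; this applies because $B$ is quadratic and the braiding on $V$ is symmetric. Theorem~\ref{thm:M(B)V} then reduces the computation of the bialgebra cohomology $\Hbt2(A_0)_\ell$ in the relevant negative degrees to that of the $H$-invariant Hochschild cohomology $\Hep2(B,\kk)^H_\ell$, provided that $\yd(\M(B),V)_\ell=0$; this vanishing is verified directly from the explicit presentation of $B$ and the action of $\dm$ on the space of quadratic relations. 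One then computes $\Hep2(B,\kk)^H$ explicitly from the defining relations of $B$. The resulting classes, after exponentiation and bosonization, produce precisely the parametric families $\mA(\zeta_I)$, $\mB(\mu_L,\nu_L,\tau_L)$ and $\mC(\zeta_I,\mu_L,\nu_L,\tau_L)$, establishing that every finite-dimensional Hopf algebra $A$ with $A_0\simeq\kdm$ is isomorphic to one of them.

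For the converse, each Hopf algebra $\mA$, $\mB$ and $\mC$ is constructed as a cocycle deformation of the bosonization $B\#\kdm$ for an appropriate $V\in\ydkm$; hence $\gr$ of such an algebra recovers that bosonization and its coradical is $\kdm$. Step~(c) of the lifting method, namely that the degree-one part of $R$ generates a Nichols algebra and not merely a pre-Nichols algebra, is inherited from the corresponding statement in \cite{FG} via the categorical equivalence $\ydkm\simeq\ydg$.

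The main obstacle is the explicit computation of $\Hep2(B,\kk)^H$ in the relevant degrees for the three Nichols algebras associated with $\II$, $\Lc$ and $\K$; this amounts to a finite-dimensional linear algebra problem, but it requires a careful combinatorial analysis of which quadratic relations in $B$ are $\dm$-stable and a description of the $\dm$-invariant $\kk$-valued linear forms on the space of relations. A secondary but nontrivial task is to prove that distinct lifting data yield non-isomorphic Hopf algebras, typically handled by extracting invariants such as the infinitesimal part of the deformation or the dimensions of spaces of skew-primitive elements attached to each isotypic component of $\kdm$.
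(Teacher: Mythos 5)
Your proposal follows essentially the same route as the paper: reduce to $\gr A\simeq \toba(M)\#\kdm$ via the generation-in-degree-one result and the Nichols algebra classification transported from \cite{FG} through the braided equivalence $\ydkm\simeq\yddm$, then realize every lifting as a cocycle deformation by exponentiating an $H$-invariant Hochschild $2$-cocycle (valid since the braiding is symmetric and $B$ is quadratic and exterior), and finally compute $\Hep2(B,\kk)^{H}$ relation by relation to obtain the three parametric families. The only inessential difference is that you flag the non-isomorphy of distinct lifting data as a remaining task, which the theorem as stated does not require (the paper only distinguishes the three families via their infinitesimal braidings).
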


The paper is organized as follows. In Section \ref{sec:prelim} we set the conventions and give the
necessary definitions and results used along the paper.
Section \ref{sec:cohomology} is devoted to develop the cohomological tools to describe all
possible deformations.
In Section \ref{sec:pointed-dihedral} we begin by recalling the description of the simple objects in $\ydkm$
and the Nichols algebras associated to them.
Then we introduce the deformed algebras listed in Theorem \ref{thm:haoverkdm}
and we prove they are all the possible finite-dimensional copointed Hopf algebras over $\dm$.

\section{Preliminaries}\label{sec:prelim}

\subsection{Conventions}
We work over an algebraically closed field $\kk$ of characteristic
zero. Our references for Hopf algebra theory are
\cite{Mo}, \cite{R} and \cite{S}.

Let $A$ be a Hopf algebra over $\kk$. We denote by $\com$, $\eps$
and $\cS$ the comultiplication,
the counit and the antipode, respectively.
Comultiplication and coactions are written using the
Sweedler-Heynemann notation with summation sign suppressed, \textit{e.g.},
$\com(a)=a_{(1)}\ot a_{(2)}$ for $a\in A$.

The \emph{coradical}
$A_0$ of $A$ is the sum of all simple
sub-coalgebras of $A$. In particular, if $G(A)$ denotes the group
of \emph{group-like elements} of $A$, we have $\kk G(A)\subseteq
A_0$. A Hopf algebra is \emph{pointed} if $A_0=\kk
G(A)$, that is, all simple sub-coalgebras are one-dimensional. We say that
$A$ has the \textit{Chevalley property} if $A_{0}$ is a Hopf subalgebra of $A$. In particular,
we say that $A$ is \textit{copointed} if $A_{0}$ is isomorphic as Hopf algebra to the algebra of functions $\kk^{\Ga}$ over some
finite group $\Ga$.

Denote by $\{A_i\}_{i\geq 0}$ the \emph{coradical
filtration} of $A$; if $A$ has the Chevalley property, then by $\gr A = \oplus_{n\ge 0}\gr A(n)$ we denote the associated
graded Hopf algebra, with
 $\gr A(n) = A_n/A_{n-1}$, setting
$A_{-1} =0$.
For $h,g\in G(A)$, the linear space of $(h,g)${\it -primitives}
 is:
$$
\mathcal{P}_{h,g}(A) :=\{x\in A\mid\com(x)= x\ot h + g\ot x\}.
$$
If $g=1=h$, the linear space $\mathcal{P}(A) = \mathcal{P}_{1,1}(A)$ is called
the set of primitive elements.

Let $D$ be a coalgebra over $\Bbbk$ and denote by $(D_{n})_{n\in \N}$
its coradical filtration.
Then there exists a coalgebra projection
$\pi:D \to D_{0}$ from
$D$ to the coradical
$D_{0}$ with kernel $I$, see
\cite[Theorem 5.4.2]{Mo}.
Define the maps
$$
\rho_{L}:= (\pi\ot \id)\com: D \to D_{0}\ot D \qquad\mbox{ and }\qquad
\rho_{R}:= (\id\ot \pi)\com: D \to D\ot D_{0},
$$
\noindent and let $P_{n}$ be the sequence of subspaces defined recursively
by
\begin{align*}
P_{0} & = 0,\\
P_{1} & = \{x\in D:\ \com(x) = \rho_L(x) +\rho_R(x)\}
= \com^{-1}(D_{0}\ot I + I\ot D_{0}),\\
P_{n} & = \{x\in D:\ \com(x) - \rho_L(x) - \rho_R(x) \in
\sum_{1\leq i \leq n-1}P_{i}\ot P_{n-i}\}, \quad n\geq 2.
\end{align*}
By a result of Nichols, $P_{n} = D_{n}\cap I$, $D_n = D_0 \oplus P_n$ for $n\geq 0$,
and $D_{n},P_{n}$ are
$D_{0}$-sub-bicomodules of $D$ via $\rho_R$ and $\rho_L$.

If $M$
is a left $A$-comodule via $\delta(m)=m_{(-1)}\ot m_{(0)} \in A\ot M$
for all $m\in M$,
then the space of {\it left
coinvariants} is $\ ^{\co \delta}M = \{x\in
M\mid\delta(x)=1\ot x\}$. In particular, if $\pi:A\rightarrow H$ is a morphism of Hopf
algebras, then $A$ is a left $H$-comodule via $(\pi\ot\id)\com$ and
in this case $$^{\co \pi}A:=\ ^{\co\ (\id\ot\pi)\com}A
=  \{a\in
A\mid (\pi\ot \id )\com(a)=1\ot a\}.$$
Right coinvariants, written
${A^{\co \pi} }$ are defined analogously.

\subsection{Yetter-Drinfeld modules and
Nichols algebras}\label{subsec:ydCG-nichols-alg}
In this subsection we recall the definition of Yetter-Drinfeld modules
over Hopf algebras $H$ and we describe the irreducible ones
in case $H$ is a group algebra. We also add the definition of
Nichols algebras associated with them.

\subsubsection{Yetter-Drinfeld modules over Hopf algebras}
Let $H$ be a Hopf algebra. A \textit{left Yetter-Drinfeld module} $M$ over $H$
is a left $H$-module $(M,\cdot)$ and a left $H$-comodule
$(M,\lambda)$ with $\lambda(m)=m_{(-1)}\ot m_{(0)} \in H\ot M$
for all $m\in M$, satisfying the compatibility
condition
$$ \lambda(h\cdot m) = h_{(1)}m_{(-1)}\cS(h_{(3)})\ot h_{(2)}\cdot m_{(0)}
\qquad \forall\ m\in M, h\in H. $$
We denote by $\ydh$ the corresponding category.
It is a braided monoidal category: for any $M, N \in \ydh$,
the braiding $c_{M,N}:M\ot N \to N\ot M$ is given by
$$c_{M,N}(m\ot n) = m_{(-1)}\cdot n \ot m_{(0)} \qquad \forall\
m\in M,n\in N.$$
A \emph{braided} Hopf algebra is a Hopf algebra in $\ydh$ such that all structural maps
are morphisms in the category.

Suppose $H$ is a finite-dimensional Hopf algebra.
Then the category $\ydhs$  is braided equivalent to $\ydh$, see \cite[Remark 1.1.5 and Theorem 2.2.1]{AG}.
 The functor $F:\ydh\to\ydhs$ that yields the braided tensor equivalence is given as follows: for $V\in\ydh$ define $F(V)$ to be $V$ as
 $\Bbbk$-vector space and with its Yetter-Drinfeld module structure defined by
\begin{equation}\label{eq:dualyd}
f\cdot v = f(\cS(v_{(-1)})) v_{(0)},\quad
\lambda(v) = \sum_{i} \cS^{-1}(f_{i})\ot h_{i}\cdot v\quad \text{ for all }v\in V,\ f\in H^{*},
\end{equation}
where $(h_{i})_{i\in I}$ and $(f_{i})_{i\in I}$ are dual bases of $H$ and $H^{*}$.

Assume $H=\kk \Ga$ with $\Ga$ a finite group. In this particular case, a left Yetter-Drinfeld module over $\kk \Ga$ is a left
$\kk\Ga$-module and left $\kk \Ga$-comodule $M$ such that
$$
\lambda(g \cdot m) = ghg^{-1} \otimes g \cdot m, \qquad \forall\ m\in M_h, g, h\in \Ga,
$$
where $M_h = \{m\in M: \lambda(m) = h\otimes m\}$; clearly, $M =
\oplus_{h\in \Ga} M_h$ and thus $M$ is a $\Ga$-graded vector space such that $g\cdot M_h \subseteq M_{ghg^{-1}}$, for all $g, h\in \Ga$.
We denote this category simply
by $\ydga$. As $\kk\Ga$ is semisimple,
Yetter-Drinfeld modules over $\kk\Ga$ are
completely reducible. The irreducible modules
are parameterized by pairs $(\oc, \rho)$, where $\oc$ is a
conjugacy class of $\Ga$ and $(\rho,V)$ is an irreducible representation of the
centralizer $C_\Ga(z)$ of a fixed point $z\in \oc$.
The corresponding Yetter-Drinfeld module is given by
$M(\oc, \rho) = \Ind_{C_\Ga(z)}^{G}V =
\Bbbk \Ga \otimes_{C_\Ga(z)} V$.
Explicitly, let $z_1 = z$, \dots, $z_{n}$
be a numeration of $\oc$ and
let $g_i\in \Ga$ such that $g_i z g_{i}^{-1} = z_i$ for all $1\le i \le
n$. Then  $M(\oc, \rho) = \oplus_{1\le i \le n}g_i\otimes V $. The Yetter-Drinfeld module
structure is given as follows. Write
$g_iv := g_i\otimes v \in M(\oc,\rho)$, $1\le i \le n$, $v\in V$.
For $v\in V$ and $1\le i \le n$, the action of $g\in \Ga$ and
the coaction are given by
$$g\cdot (g_iv) = g_j(\gamma\cdot v), \qquad
\lambda(g_iv) = z_i\otimes g_iv,
$$
where $gg_i = g_j\gamma$, for unique $1\le j \le n$ and $\gamma\in
C_\Ga(z)$.

Consider now the function algebra $\Bbbk^{\Ga}$
and denote by $\{\delta_{g}\}_{g\in \Ga}$ the dual basis of the canonical
basis of $\Bbbk \Ga$. By the braided equivalence described
above, we have also a parametrization of the irreducible objects in
$\ydkga$. Explicitly, $M(\oc, \rho)  \in \ydkga$ with the structural maps given by
$$  \delta_{g} \cdot (g_{ i} v ) = \delta_{ g,z_{i}^{-1}} g_{ i} v ,\qquad
\lambda(g_{ i}v ) =
\sum_{g\in \Ga} \delta_{g^{-1}} \ot g \cdot (g_{i}v ).$$

\subsubsection{Nichols algebras}\label{subsubsec:nichols-alg}
The Nichols algebra of a braided vector space $(V,c)$ can be
defined in different ways, see \cite{N, AG, AS, H}.
As we are interested in Nichols algebras of
braided vector spaces arising from Yetter-Drinfeld modules,
we give the explicit definition in this case.

\begin{definition}\cite[Definition 2.1]{AS}
Let $H$ be a Hopf algebra and $V \in \ydh$. A braided $\N$-graded
Hopf algebra $B = \bigoplus_{n\geq 0} B(n) \in \ydh$  is called
the \textit{Nichols algebra} of $V$ if
\begin{enumerate}
 \item[(i)] $\kk\simeq B(0)$, $V\simeq B(1) \in \ydh$,
 \item[(ii)] $B(1) = \mathcal{P}(B)
=\{r\in B~|~\com_{B}(b)=b\ot 1 + 1\ot b\}$, the linear space of primitive elements.
 \item[(iii)] $B$ is generated as an algebra by $B(1)$.
\end{enumerate}
In this case, $B$ is denoted by $\toba(V) = \bigoplus_{n\geq 0} \toba^{n}(V) $.
\end{definition}

For any $V \in \ydh$ there is a Nichols algebra $\toba(V)$ associated with it.
It can be constructed as a quotient of the tensor algebra $T (V)$ by the largest
homogeneous two-sided ideal $I$ satisfying:
\begin{itemize}
\item $I$ is generated by
homogeneous elements of degree $\geq 2$.
\item $\com(I) \subseteq I\ot T(V) + T(V)\ot I$, \emph{i.~e.}, it is also a coideal.
\end{itemize}
In such a case, $\toba(V) = T(V)/ I$. See
\cite[Section 2.1]{AS} for details.

An important observation is that the Nichols algebra
$\toba(V)$, as algebra and coalgebra,
is completely determined just by the braiding.

Let $\Ga$ be a finite group.
We denote by
$\toba(\oc,\rho)$ the
Nichols algebra associated
with the irreducible Yetter-Drinfeld module $M(\oc, \rho) \in \ydga$.

\subsection{Bosonization and Hopf algebras with a projection}\label{subsec:bosonization}
Let $H$ be a Hopf algebra and $B$ a braided Hopf algebra in $\ydh$.
The procedure to obtain a
usual Hopf algebra from $B$ and $H$ is called
the Majid-Radford biproduct or \emph{bosonization}, and it is usually
denoted by $B \#H$. As a vector space $B \# H = B\otimes H$, and the
multiplication and comultiplication are given by the smash-product
and smash-coproduct,
respectively. That is, for all $b, c \in B$ and $g,h \in H$
\begin{align*}
(b \# g)(c \#h) & = b(g_{(1)}\cdot c)\# g_{(2)}h,\\
\com(b \# g) & =b^{(1)} \# (b^{(2)})_{(-1)}g_{(1)} \ot
(b^{(2)})_{(0)}\# g_{(2)},
\end{align*}
where $\com_{B}(b) = b^{(1)}\ot b^{(2)}$ denotes the comultiplication in $B\in \ydh$.
We identify $b=b\# 1$ and
$h=1\# h$; in particular we have $bh=b\# h$ and $hb=h_{(1)}\cdot b\# h_{(2)}$.
Clearly, the map $\iota: H \to B\#H$ given by $\iota(h) = 1\#h$
is an injective Hopf algebra map, and the map $\pi: B\#H \to H$
given by $\pi(b\#h) = \eps_{B}(b)h$
is a surjective Hopf algebra map such that $\pi \circ \iota = \id_{H} $.
Moreover, it holds that $B = (B\#H)^{\co \pi}$.

Conversely, let $A$ be a Hopf algebra with bijective antipode and
$\pi: A\to H$ a Hopf algebra epimorphism admitting
a Hopf algebra section $\iota: H\to A$.
Then $B=A^{\co\pi}$ is a braided Hopf algebra in $\ydh$ and $A\simeq B\# H$
as Hopf algebras.

\subsection{Deforming cocycles}\label{subsec:def-cocycles}
Let $A$ be a Hopf algebra.
Recall that a convolution invertible linear map
$\sigma $ in $\Hom_{\Bbbk}(A\ot A, \Bbbk)$
is a
\textit{normalized multiplicative 2-cocycle} if
$$ \sigma(b_{(1)},c_{(1)})\sigma(a,b_{(2)}c_{(2)}) =
\sigma(a_{(1)},b_{(1)})\sigma(a_{(2)}b_{(2)},c)  $$
and $\sigma (a,1) = \eps(a) = \sigma(1,a)$ for all $a,b,c \in A$,
see \cite[Section 7.1]{Mo}.
In particular, the inverse of $\sigma $ is given by
$\sigma^{-1}(a,b) = \sigma(\cS(a),b)$ for all $a,b\in A$.

Let $B$ be an algebra and consider $\Hom_{\Bbbk}(A^{\ot n}, B)$
as a $\Bbbk$-algebra with the convolution product given by
$(f*g)(a)=f(a_{(1)})g(a_{(2)})$ for all $a\in A^{\ot n}$. With this notation, the 2-cocycle condition above reads
\begin{equation}\label{eq:2-cocycle}
(\eps  \ot \sigma) * [\sigma \circ ( \id_{A} \ot m) ]= (\sigma \ot \eps) * [\sigma \circ (m \ot \id_{A})].
\end{equation}

Using a $2$-cocycle $\sigma$ it is possible to define
a new algebra structure on $A$, which we denote by $ A_{\sigma} $, by deforming the multiplication.
Moreover, $A_{\sigma}$ is indeed
a Hopf algebra with
$A = A_{\sigma}$ as coalgebras,
deformed multiplication
$m_{\sigma}: A \ot A \to A$
given by $$m_{\sigma}(a,b) = a\cdot_{\sigma}b = \sigma(a_{(1)},b_{(1)})a_{(2)}b_{(2)}
\sigma^{-1}(a_{(3)},b_{(3)})\qquad\text{ for all }a,b\in A,$$
and antipode
$\cS_{\sigma}: A \to A$ given by (see \cite[Theorem 1.6 (b)]{doi} for details)
$$\cS_{\sigma}(a)=\sigma(a_{(1)},\cS(a_{(2)}))\cS(a_{(3)})
\sigma^{-1}(\cS(a_{(4)}),a_{(5)})\qquad\text{ for all }a\in A.$$

\subsubsection{Deforming cocycles for graded Hopf algebras}\label{subsubsec:defgraded}
Let $\displaystyle{A=\bigoplus_{n\geq 0} A_n}$ be a
$\mathbb{N}_{0}$-graded Hopf algebra, where
$A_n$ denotes the homogeneous component of $A$ of degree $n$.
Let $\sigma\colon A\otimes A\to \Bbbk$ be a normalized
multiplicative $2$-cocycle and
assume that $\sigma|_{A_0\otimes A_0}=\eps\ot \eps$.

We decompose $\sigma=\sum_{i=0}^\infty \sigma_i$ into
the (locally finite) sum of homogeneous maps $\sigma_i$, with
$$
\sigma_i : (A\otimes A)_i = \bigoplus_{p+q=i} A_p\otimes A_q
\stackrel{\sigma|_{(A\otimes A)_i}}{\to} \Bbbk.
$$
We set $-i$ for the \textit{degree} of $\sigma_{i}$.
Note that due to our assumption $\sigma|_{A_0\otimes A_0}=\eps\ot \eps$
we have $\sigma_0=\eps \ot \eps$.
Decomposing $\sigma^{-1}=\sum_{j=0}^\infty \eta_j$, where $\eta_j=(\sigma^{-1})|_{(A\ot A)_j}$, we have that
$$\displaystyle{\sum_{i+j=\ell} \sigma_i * \eta_j =\delta_{\ell, 0}\; \eps\ot\eps =\sum_{i+j=\ell} \eta_i * \sigma_j },$$ for $\ell \geq 1$.
Note that $\eta_0=\eps\ot\eps$ and that for the least positive integer $s$ for which $\sigma_s\ne 0$
we have $\eta_s=-\sigma_s$.
Moreover, the cocycle condition \eqref{eq:2-cocycle}
implies that
$$\sum_{i+j=\ell}(\eps\ot\sigma_i )*[\sigma_j \circ (\id\ot m)]=
\sum_{i+j=\ell}(\sigma_i\ot\eps )*[\sigma_j \circ (m\ot \id)]$$
for all $\ell\ge 1$. In particular,
$$\eps\ot\sigma_s + [\sigma_s \circ (\id\ot m)]=\sigma_s\ot\eps +[\sigma_s \circ (m\ot \id)]$$
which implies that $\sigma_s\colon A\ot A\to \Bbbk$ is a
\textsl{Hochschild 2-cocycle} of $A$.
We call $\sigma_s$ the
\textsl{graded infinitesimal part} of $\sigma$. We denote the Hochschild
2-cohomology group by $ \Hep2(A,\kk)$.

\section{Computing cohomology}\label{sec:cohomology}
Let $A = \bigoplus_{n\geq 0} A_{n}$ be a graded Hopf algebra. We say that a filtered Hopf algebra $K$ is a \textit{lifting} of $A$ if
$\gr K \simeq A$ as graded Hopf algebras. The aim of this section is to give a recipe for computing the set of
$A_{0}$-bitrivial bialgebra $2$-cocycles $\Hbt2(A)$, see definitions below. For more details on
bialgebra cohomology and the definition of the Hochschild and coalgebra differentials $\partial^{h}$ and
$\partial^{c}$, respectively, we refer to \cite[Section 2]{MW}.

\begin{obs}\label{rmk:lifting=def}
By \cite[Theorem 2.2]{DCY}, there is a natural bijection between isomorphisms classes of
liftings of a graded bialgebra $A$ and isomorphisms classes of formal deformations of $A$.
Indeed, for a lifting $K$ of $A$, we denote by $m_{K}$ the product and we identify $K_\ell$ with $\oplus_{i=1}^\ell A_i$.
 Since $K$ is a filtered bialgebra,
one has that $m_{K}: A_{i}\ot A_{j} \to K_{i+j}$. Thus, there exist unique homogeneous
maps $m_j= \pi_{j} \circ m_K$ for all $j\geq 0$, such that $m_{K}(a\ot b) = \sum_{j\geq 0} m_{j}(a\ot b)$; here
$\pi_{j}: K_{j} \to A_{j}$ denotes the canonical projection with kernel $\oplus_{i=1}^{j-1} A_i$. Since
$\gr K = A$ as graded algebras, one has that $m_{0}=m$.
The corresponding formal deformation for $K$ is given by $(A[t],m_t)$ with
$$m_t(a\ot b)=\sum_{j \geq 0}m_j(a\ot b)t^j, \qquad a,b\in A.$$
\end{obs}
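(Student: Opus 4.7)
The plan is to exhibit maps in both directions between isomorphism classes of liftings of the graded bialgebra $A$ and isomorphism classes of formal deformations (with undeformed comultiplication) of $A$, and then check that they are mutually inverse and compatible with equivalence. The forward direction is essentially spelled out in the statement: given a lifting $K$ with product $m_K$, the filtration $\{K_n\}_{n\geq 0}$ together with the identification $\gr K\simeq A$ identifies $K$ with $A$ as a graded vector space and lets us write $m_K=\sum_{s\geq 0} m_s$, where $m_s|_{A_i\ot A_j}$ takes values in $A_{i+j-s}$ (so $m_s$ is homogeneous of degree $-s$) and $m_0=m$. Defining $m_t=\sum_{s\geq 0} m_s\, t^s$ on $A[t]$, I would verify that associativity of $m_t$ amounts to matching homogeneous components in each $t$-degree of the identity $m_K(m_K\ot\id)=m_K(\id\ot m_K)$, and that the unit and the unchanged comultiplication $\com$ turn $(A[t], m_t, \com)$ into a bialgebra over $\Bbbk[t]$.

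For the inverse direction, given a formal deformation $(A[t], m_t)$ with $m_t=m+tm_1+t^2m_2+\cdots$ and each $m_s$ homogeneous of degree $-s$, I would construct a filtered bialgebra $K$ by setting $K=A$ as a vector space and defining $m_K(a,b)=\sum_{s\geq 0} m_s(a,b)$. The degree-shift property makes this sum locally finite on homogeneous inputs; the filtration $K_n=\bigoplus_{k\leq n} A_k$ is stable under $m_K$ precisely because $m_s$ lowers total degree by $s$; and the associated graded product is $m_0=m$, whence $\gr K\simeq A$. Associativity of $m_K$ and compatibility with $\com$ follow from the corresponding identities for $m_t$ by evaluating at $t=1$, which is legitimate thanks to local finiteness.

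Next, I would check that the two constructions are mutually inverse: starting from a lifting, producing $m_t$ and then specializing at $t=1$ recovers $m_K$ by construction; conversely, starting from a formal deformation, producing $K$ and then reading off its homogeneous components returns the original $m_s$ by uniqueness of the homogeneous decomposition. Finally, I would show that the bijection descends to isomorphism classes: an isomorphism of liftings $\varphi\colon K\to K'$ preserves the filtration and induces the identity on $\gr K\simeq\gr K'\simeq A$, so it decomposes as $\varphi=\id+\varphi_1+\varphi_2+\cdots$ with $\varphi_s$ homogeneous of degree $-s$, yielding an equivalence $\varphi_t=\id+t\varphi_1+t^2\varphi_2+\cdots$ of formal deformations; the converse is analogous.

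The main obstacle will be the bookkeeping for the inverse direction: one must formalize the degree-shift property $m_s\colon (A\ot A)_\ell\to A_{\ell-s}$ as a structural hypothesis on the class of formal deformations under consideration, and verify that specialization at $t=1$ produces a genuine (as opposed to purely formal) filtered bialgebra structure on $A$. Since the statement is attributed to \cite[Thm. 2.2]{DCY}, this plan is essentially an exposition of that theorem in the graded setting used here; the work is mainly to make sure the conventions on gradings, filtrations, and the sign of $\deg t$ line up with the cocycle-deformation set-up used in the rest of the section.
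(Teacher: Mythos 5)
Your proposal is correct and follows essentially the same route as the paper: the forward construction (decompose $m_K$ into homogeneous components of degree $-s$ via an identification of $K$ with $A$ and set $m_t=\sum_s m_s t^s$) is exactly what the remark spells out, and the inverse direction, mutual inverseness, and descent to isomorphism classes that you add are precisely the content of the cited result \cite[Thm. 2.2]{DCY}, which the paper simply invokes rather than reproves. The only point worth keeping in mind is that identifying $K$ with $A$ as a graded vector space with \emph{undeformed} comultiplication requires choosing a splitting of the filtration that is a coalgebra isomorphism $K\simeq\gr K$; both your write-up and the paper's remark leave this to the reference.
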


\begin{obs}\label{rmk:relation-cocycle-def}
Let $\sigma\colon A\otimes A\to \Bbbk$ be a normalized multiplicative $2$-cocycle.
The cocycle deformation $A_\sigma$ is a filtered bialgebra with the
underlying filtration inherited from the grading on $A$, \emph{i.~e.}, the
$\ell$-th filtered part is $(A_\sigma)_{(\ell)}=\bigoplus_{i=0}^\ell A_i$.
Note that the associated graded bialgebra $\gr A_\sigma$ can be identified with $A$,
that is, $A_{\sigma}$ is a lifting of $A$.
As in Remark \ref{rmk:lifting=def},
decomposing the multiplication
$m_\sigma=\sigma*m*\sigma^{-1}$ into the sum of homogeneous components
$m_i$ of degree $-i$, allows us to identify the filtered $\Bbbk$-linear structure $A_\sigma$ with
a $\Bbbk[t]$-linear structure $(m_\sigma)_{t} \colon A[t]\otimes A[t] \to A[t]$ induced by
$(m_\sigma)_{t}|_{A\otimes A}=
m+tm_1  +t^2m_2 +\ldots$ (as before, we assume that the degree of $t$ is 1).
If $\Delta_{t}\colon A[t]\to A[t]\otimes A[t]$ is the
$\Bbbk[t]$-linear map induced by $\Delta_{t}|_{A} = \Delta$,
then the graded Hopf algebra $A[t]_\sigma= (A[t], (m_\sigma)_{t}, \Delta_{t})$
is a graded formal deformation of $A$ in the sense of \cite{DCY}, see also \cite{GS, MW}.
Note that in case $m$ does not commute with the
graded infinitesimal part $\sigma_s$ of $\sigma$,
then $( \sigma_s*m-m*\sigma_s,0)$,
is the infinitesimal part of the formal graded deformation
(and is a $2$-cocycle in $\Zbh2(A)_s$).
\end{obs}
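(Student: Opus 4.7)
The plan is to unpack each of the claims in the remark by carefully tracking the degrees in the expansion $\sigma = \sum_i \sigma_i$, $\sigma^{-1} = \sum_j \eta_j$ provided in Subsection 2.4.1. First I would verify that $A_\sigma$ is a filtered bialgebra. Given $a\in A_p$ and $b\in A_q$, the formula
\[
m_\sigma(a\ot b) = \sigma(a_{(1)},b_{(1)})\, a_{(2)}b_{(2)}\, \sigma^{-1}(a_{(3)},b_{(3)}),
\]
together with homogeneity of each $\sigma_i$ and $\eta_j$ of degree $-i$ and $-j$ respectively, immediately shows $m_\sigma(a\ot b)\in \bigoplus_{k\leq p+q} A_k$. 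Hence $m_\sigma$ respects the filtration $(A_\sigma)_{(\ell)}=\bigoplus_{i=0}^\ell A_i$; the coproduct is unchanged and therefore trivially respects it, and the antipode respects it by the same argument applied to the formula for $\cS_\sigma$ recalled in Subsection 2.3.

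Next, to see $\gr A_\sigma \simeq A$, I would use $\sigma_0=\eps\ot\eps=\eta_0$ to identify the top-degree piece of $m_\sigma$: the degree-zero contribution to $\sum_{i,j}\sigma_i*m*\eta_j$ comes only from $(i,j)=(0,0)$ and equals $\eps\ot\eps*m*\eps\ot\eps=m$. Thus in the associated graded the deformed product recovers the original $m$, giving a canonical isomorphism $\gr A_\sigma\simeq A$ of graded bialgebras, which is exactly the statement that $A_\sigma$ is a lifting of $A$ in the sense of Section~3. Writing $m_\sigma=m+m_1+m_2+\cdots$ with $m_i$ of degree $-i$, the passage to the $\Bbbk[t]$-linear picture $(m_\sigma)_t|_{A\ot A}=m+tm_1+t^2m_2+\cdots$ is tautological: associativity of $(m_\sigma)_t$ at each order in $t$ is equivalent to associativity of $m_\sigma$ restricted to a fixed total degree, and compatibility with $\com_t$ reduces to the fact that $m_\sigma$ is a coalgebra morphism together with $\com_t|_A=\com$. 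This realizes $A[t]_\sigma$ as a graded formal deformation of $A$ in the sense of \cite{DCY}.

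Finally, let $s$ be the smallest positive integer with $\sigma_s\neq 0$. From the relations $\sum_{i+j=\ell}\sigma_i\eta_j=\delta_{\ell,0}$ I would extract inductively that $\eta_j=0$ for $0<j<s$ and $\eta_s=-\sigma_s$. Substituting into the expansion $m_\sigma=\sum_{i,j}\sigma_i*m*\eta_j$, only the pairs $(0,0)$, $(s,0)$ and $(0,s)$ contribute up to order $s$, giving
\[
m_\sigma = m + \bigl(\sigma_s*m - m*\sigma_s\bigr) + \text{(lower degree terms)},
\]
so that $m_s=\sigma_s*m-m*\sigma_s$ is the degree $-s$ part of $m_\sigma$. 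That $(m_s,0)$ is a $2$-cocycle in $\Zbh2(A)_s$ then follows from the general fact that the infinitesimal part of a formal bialgebra deformation is always a bialgebra $2$-cocycle (see \cite{GS,MW}); alternatively, one can expand the multiplicative cocycle condition \eqref{eq:2-cocycle} in degree $s$ as in Subsection 2.4.1 and read off that $\partial^h m_s=0$, with $\partial^c 0=0$ trivially.

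The main obstacle here is not analytical but bookkeeping: one must be careful that the three different gradings in play (the grading on $A$, the degree of the homogeneous components $\sigma_i$, and the order in $t$) are matched consistently, so that the filtered-to-graded identification of $A_\sigma$ coincides with the $\Bbbk[t]$-linear graded deformation on $A[t]$. Once that dictionary is set, all the claims reduce to substituting $\sigma_0=\eps\ot\eps$ and $\eta_s=-\sigma_s$ into the convolution expansion of $\sigma*m*\sigma^{-1}$.
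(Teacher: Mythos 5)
Your proposal is correct and follows essentially the same route the paper intends: the remark is an unpacking of the decomposition $\sigma=\sum_i\sigma_i$, $\sigma^{-1}=\sum_j\eta_j$ with $\eta_s=-\sigma_s$ set up in Subsection 2.4.1, and your degree-by-degree analysis of $m_\sigma=\sum_{i,j}\sigma_i*m*\eta_j$ is exactly that computation. The only point worth making explicit is that the identification $\gr A_\sigma\simeq A$ and the vanishing $m_k=0$ for $0<k<s$ rely on the standing hypothesis $\sigma|_{A_0\ot A_0}=\eps\ot\eps$ (so that $\sigma_0=\eps\ot\eps=\eta_0$), which is not implied by normalization alone but is part of the setup the remark inherits.
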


\begin{notation}\label{notation-le} Let $U,V$ be graded vector spaces and
$f\colon U\to V$ a linear map. For $r\in\mathbb{N}$,
define $f_r\colon U\to V$ as the linear map given by $f_r|_{U_r}=f$
and $f_r|_{U_s}=0$ for $r\not=s$.  Similarly we define a linear map
$f_{\le r}$ by $f_{\le r}|_{U_s}=f$ for $s\le r$ and $f_{\le r}|_{U_s}=0$ for $s>r$.
The map $f_{< r}$ is defined by $f_{< r}|_{U_s}=f$ for $s< r$ and
$f_{< r}|_{U_s}=0$ for $s\ge r$.
\end{notation}

Recall that the second truncated Gerstenhaber-Schack bialgebra
cohomology $$\Hbh2(A)=\Zbh2(A)/\Bbh2(A)$$ is given by
\begin{align*}
\widehat{{\rm Z}}_b^2(A)= \big\{(f,g)\;\big|\;& f\colon A^+\ot A^+\to A^+,\
g\colon A^+\to A^+\ot A^+,\nonumber\\
&af(b,c)+f(a,bc)=f(ab,c)+f(a,b)c,\\ 
&c_{(1)}\ot g(c_{(2)})+(\id\ot\De)g(c)=(\De\ot\id)g(c)+g(c_{(1)})\ot c_{(2)}\\ 
&(f\ot m)\De(a\ot b) -\De f(a,b)+(m\ot f)\De(a\ot b) = \\
&\phantom{(f\ot m)\De(a\ot b)} -\De (a)g(b)+g(ab)-g(a)\De (b) \nonumber \big\}
\end{align*}
and
\begin{align*}
\widehat{{\rm B}}_b^2(A)= \big\{(f,g)\;\big|\; \exists h\colon A^+\to A^+, \,\,
&f(a,b)=ah(b)-h(ab)+h(a)b, \\
   &g(c)=-c_{(1)}\ot h(c_{(2)})+\De h(c) -h(c_{(1)})\ot c_{(2)} \big\}.
\end{align*}
We view $f$ as a map from $A\ot A$ to $A$ with the understanding that
it is normalized and co-normalized in the sense that $f(\id\ot u)=0=f(u\ot\id)$ and
$\ep f=0$.  Similarly, we view $g$ as a map from $A$ to $A\ot A$ with the
understanding that it is normalized and co-normalized in the sense that $g u =0$
and $(\ep\ot\id)g=0=(\id\ot\ep)g$.
We will restrict our attention to the $A_0$-bitrivial part of the cohomology, \emph{i.~e.},
pairs $(f,g)$ where $f|_{A_0\ot A}=0=f|_{A\ot A_0}$ and $(\pi\ot\id)g=0=(\id\ot\pi)g$.
Here $\pi\colon A\to A_0$ denotes the canonical projection.
We denote the corresponding sets by $\Zbt2(A)$ and $\Bbt2(A)$,
and the relevant cohomology by
$$
\Hbt2(A)=\Zbt2(A)/\Bbt2(A).
$$
We remark that, if $A_0$ is a group algebra or the dual of a group algebra,
then we have that $\Hbt2(A)=\Hbh2(A)$, see \cite[Lemma 2.3.1]{MW}.
For $\ell\in\mathbb{Z}$, we set
$$
\Hbt2(A)_\ell=\Zbt2(A)_\ell/\Bbt2(A)_\ell,
$$
for the relevant graded cohomology. It is obtained by restricting to maps of homogeneous degree $\ell$.
We also point out the following well-known facts: if $f\colon A\ot A\to A$ is an $A_0$-trivial
Hochschild $2$-cocycle,then from the equalities $\pah f(h,x,y)=\pah f(x,h,y)=\pah f(x,y,h)$ and the $A_0$-triviality
it follows that
\begin{align*}
f(hx,y)= hf(x,y),&&
f(xh,y)= f(x,hy),&&
f(x,yh)= f(x,y)h,
\end{align*}
for all $x,y\in A$, $h\in A_0$.
If $A_0=H$ is moreover a Hopf algebra, then one has that $A\simeq B\# H$ as algebras, where $B=A^{\mathrm{co} H}$. Hence,
$f$ is $H$-stable and uniquely determined by
its values on $B\ot B$.  This follows from the fact that
$f(xh,yk)= f(x,h_{(1)}yS(h_{(2)}))h_{(3)}k$ for all
$x,y\in B$ and $h,k\in H$.  In particular,
\begin{eqnarray*}
f(hx,ky)&=& hf(x,k_{(1)}yS(k_{(2)}))k_{(3)}, \\
f(h_{(1)}x S(h_{(2)}),h_{(3)} y S(h_{(4)})) &=& h_{(1)} f(x,y) S(h_{(2)}).
\end{eqnarray*}

\subsection{Liftings as cocycle deformations}\label{subsec:partialinv} Let $H$ be a fixed Hopf algebra with bijective
antipode and $B$ a graded connected bialgebra in $\ydh$.
Set $A=B\# H$ the bosonization of $B$ with $H$.
Then $A$ is a graded Hopf algebra with $A^{\mathrm{co} H}=B$ and $A_{0}=H$, see
Subsection \ref{subsec:bosonization}.
We will study $H$-bitrivial formal deformations of $A$.
We remark that $H$-bitriviality is automatic if $H$ is a
semisimple and cosemisimple Hopf algebra such that the integrals in $H$ and $H^*$ are cocommutative, see \cite[Remark 2.3.2]{MW}.

Recall that $H$ acts on $\Hep2(B,\kk)$ by $(\eta^h)(x,y) = \eta(h_{(1)}\cdot x, h_{(2)}\cdot y)$
for all $h\in H$, $\eta \in \Hep2(B,\kk)$ and $x,y\in B$.
Let $\Hep2(B,\kk)^H$ denote the $H$-stable part.  Then we have a morphism
$\tilde{(\;)}\colon \Hep2(B,\kk)^H\to \Hep2(A,\kk)$
given by $\tilde{f}(xh,yk)=f(x,h\cdot y)\ep(k)$.  If $H$ is semisimple, then this map is an isomorphism. Moreover,
we have a connecting homomorphism \cite[Theorem 2.3.7]{MW}:
$$\pa\colon \Hep2(A,\kk)\to \Hbh2(A)$$ given by
$\pa(f)=(\pac f,0)$, where
$$
\pac f(x,y) = x_{(1)} y_{(1)} f( x_{(2)}, y_{(2)}) - f(x_{(1)}, y_{(1)}) x_{(2)}y_{(2)}.
$$
Note that for $f\in \Zep2(B,\kk)^{H}$, one has that $\pa(\tilde{f})\in \Zbt2(A)$.

\begin{defn}\label{def:M(B)} For an augmented algebra $B$ with multiplication map
$\m\colon B\ot_B B \to B$ define
$$\M(B):=\ker\big(B^+\otimes_B B^+\stackrel{\m}{\to} B^+\big).$$
\end{defn}

\begin{obs}\label{rmk:hoch-M(B)} It is known that
$ \Hep2(B,\kk) \simeq \Hom(\M(B),\kk) $, see for example \cite[Subsection 4.1]{MW}.
If $(B^+)^2$ denotes the image of the multiplication
$\m\colon B^+\ot_B B^+\to B^+$ and $\varphi\colon (B^+)^2\to B^+\ot_B B^+$
is any linear section of the multiplication map, then the isomorphism
$\Hom(\M(B),\kk)\stackrel{\sim}{\to}  \Hep2(B,\kk)$ is given by $\hat{\varphi}(f)= f_\varphi$,
where $f_\varphi(x\ot y)= f(x\ot_B y-\varphi(xy))$.
\end{obs}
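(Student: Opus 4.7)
I would compute $\Hep2(B,\kk)$ directly from the normalized Hochschild cochain complex of $B$ with values in $\kk$, viewed as a trivial $B$-bimodule via the augmentation $\ep$. Since both actions of $B^+$ on $\kk$ vanish, the outer terms of the Hochschild differential disappear, and on $B^+$-valued arguments the differential reduces to its internal-multiplication contributions.

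In degree $2$, a map $g\colon B^+\ot B^+\to\kk$ then satisfies $\pah g(x,y,z)=-g(xy,z)+g(x,yz)$, so the cocycle condition $g(xy,z)=g(x,yz)$ is precisely the statement that $g$ descends through the canonical surjection $B^+\ot B^+\twoheadrightarrow B^+\ot_B B^+$. Hence $\Zep2(B,\kk)\simeq \Hom(B^+\ot_B B^+,\kk)$. In degree $1$, for $h\colon B^+\to\kk$ one has $\pah h(x,y)=-h(xy)$, which factors through the multiplication $\m\colon B^+\ot_B B^+\to (B^+)^2$; conversely, any $\kk$-linear functional on $(B^+)^2$ extends to $B^+$. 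Therefore, under the above identification, the coboundary subspace corresponds to those maps in $\Hom(B^+\ot_B B^+,\kk)$ that vanish on $\M(B)=\ker\m$.

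Now apply $\Hom(-,\kk)$ to the short exact sequence of $\kk$-vector spaces
$$0\longrightarrow \M(B)\longrightarrow B^+\ot_B B^+\xrightarrow{\,\m\,}(B^+)^2\longrightarrow 0.$$
This yields a surjection $\Hom(B^+\ot_B B^+,\kk)\twoheadrightarrow\Hom(\M(B),\kk)$ whose kernel is exactly the coboundary subspace. Passing to the quotient, restriction $[g]\mapsto g|_{\M(B)}$ produces the desired isomorphism $\Hep2(B,\kk)\xrightarrow{\sim}\Hom(\M(B),\kk)$. A linear section $\varphi$ of $\m$ splits the sequence, so $\xi\mapsto \xi-\varphi(\m(\xi))$ defines a retraction of the inclusion $\M(B)\hookrightarrow B^+\ot_B B^+$; composing this retraction with $f$ and pre-composing with the projection $B^+\ot B^+\twoheadrightarrow B^+\ot_B B^+$ produces exactly $\hat{\varphi}(f)(x\ot y)=f\bigl(x\ot_B y-\varphi(xy)\bigr)$.

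The remaining verifications are routine: (i) $x\ot_B y-\varphi(xy)$ lies in $\M(B)$ since both terms map to $xy$ under $\m$; (ii) $f_\varphi$ factors through $B^+\ot_B B^+$ and hence is a Hochschild $2$-cocycle; (iii) $[f_\varphi]|_{\M(B)}=f$, using $\varphi(0)=0$; and (iv) two sections $\varphi,\varphi'$ yield cohomologous cocycles, since $f_\varphi-f_{\varphi'}=f\circ(\varphi'-\varphi)\circ\m$ is a coboundary. No genuine obstacle arises; the content of the remark is the explicit formula for the inverse, which will be needed later to lift Hochschild $2$-cocycles on $B$ to bialgebra $2$-cocycles on $B\#H$.
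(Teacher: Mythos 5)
Your argument is correct, and it is essentially the standard computation underlying the citation that the paper uses in place of a proof: the paper only refers to \cite[Subsection 4.1]{MW}, and your identification of $\Zep2(B,\kk)$ with $\Hom(B^+\ot_B B^+,\kk)$ (via the reduced complex with trivial coefficients) together with the identification of the coboundaries with the annihilator of $\M(B)=\ker\m$ is exactly that argument, including the correct derivation of the formula $f_\varphi(x\ot y)=f(x\ot_B y-\varphi(xy))$ from a splitting of $0\to\M(B)\to B^+\ot_B B^+\to (B^+)^2\to 0$. No gap.
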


Let us now fix $\ell<0$ and let $r=-\ell$.  We will show that the results and proofs from \cite{MW}
can be adjusted to show the following: if $B$ is generated as an algebra in degrees at most $r$ and
$\yd(\M(A),\mP(A))_\ell=0$, then the connecting homomorphism at degree $\ell$,
$$
\tilde{\pa}_\ell\colon \Hep2(B,\kk)^H_\ell \to \Hbt2(A)_\ell,
$$
given by $\tilde{\pa}_\ell(f) = \pa(\tilde{f})$, is surjective.
The following generalizes Lemma 4.2.2 of \cite{MW} (see also Theorem 5.3 of \cite {AKM}).
The proof is almost identical.

\begin{lema}[cf. {\cite[Lemma 4.2.2]{MW}}]\label{lem:FG} Let $A=B\#H$ be as above and
$(F,G)\in \Zbt2(A)_\ell$ for some $\ell<0$.  Set $r=-\ell$.
Assume that $B$ is generated as an algebra by elements of degree at most $r$ and that
$\yd(\M(B),\mP(B))_\ell=0$.  If $F_{r}=0$, then $(F,G)\in\Bbt2(A)$.
\end{lema}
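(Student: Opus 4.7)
The strategy is to construct, by induction on the grading of $A$, an $H$-bitrivial $1$-cochain $h\colon A^+\to A^+$ of homogeneous degree $\ell=-r$ such that $(F,G)=(\pah h,\pac h)$. Since $h$ takes values in $A^+$ and has degree $-r$, we are forced to set $h|_{A_n}=0$ for all $n\le r$, so $h$ is first potentially nonzero on $A_{r+1}$, where it lands in $A_1$. At each step the recursion $\pah h=F$ dictates
\[
h(xy)=xh(y)+h(x)y-F(x,y)
\]
for any decomposition $z=xy$ with $x,y\in A^+$ of positive degrees, so the content of the construction is the well-definedness of $h$.

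For the base case $n=r+1$, since $B$ is generated by elements of degree $\le r$, every $z\in B_{r+1}$ is a sum of products $xy$ with $x,y\in B^+$ of positive degree. In this range $h(x)=h(y)=0$, so we must set $h(z):=-F(x,y)$. The ambiguity is controlled by the space $\M(B)_{r+1}$. The Hochschild cocycle condition on $F$, combined with the smash product structure $A=B\#H$ and the $H$-bitriviality of $F$, makes $F|_{B^+\ot B^+}$ descend to a map $B^+\ot_B B^+\to A$. The assumption $F_r=0$ rules out $A_0$-valued components in the relevant degree, so $H$-bitriviality forces the induced map on $\M(B)_{r+1}$ to land in the $H$-coinvariants of $A_1$, which are $B_1=\mP(B)$. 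Since $F$ is $\ydh$-equivariant, this produces a morphism in $\yd(\M(B),\mP(B))_\ell$, which vanishes by hypothesis. Hence $h$ is well-defined on $B_{r+1}$, and we extend it $H$-bitrivially to $A_{r+1}=B_1\# H$ by $h(xk)=h(x)k$.

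For the inductive step $n>r+1$, the same recursion defines $h$ on $A_n$. Consistency among different factorizations $xy=x'y'$ follows from the inductive hypothesis together with the Hochschild cocycle identity for $F$, exactly as in the proof of \cite[Lemma 4.2.2]{MW}: associativity-type changes reduce directly to the cocycle identity, and genuine relations are controlled by the base case since $B$ is generated in degrees $\le r$. Once $\pah h=F$ is established, the mixed compatibility equation in the definition of $\Zbh2(A)$, combined with the coalgebra cocycle condition on $G$ and the $H$-bitriviality, forces $G=\pac h$ by a dual inductive argument.

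The main obstacle is the base case: carefully verifying that the obstruction to well-definedness of $h$ on $B_{r+1}$ lies in $\yd(\M(B),\mP(B))_\ell$ rather than a larger ambient space. This requires tracking the $H$-module and $H$-comodule structures simultaneously and, crucially, using $F_r=0$ to confine the image to the primitive part $\mP(B)$. Beyond this, the construction and its consistency are forced by the Hochschild identity and the generation hypothesis on $B$, following the blueprint of \cite[Lemma 4.2.2]{MW}.
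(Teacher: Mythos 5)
Your overall blueprint is the same as the paper's: a degree-by-degree induction starting at $r+1$, in which the obstruction to trivializing the cocycle in degree $s$ is an induced map on $\M(B)_s$ that one must show lies in $\yd(\M(B),\mP(B))_\ell=0$. (The paper phrases this as successively subtracting coboundaries $\pab T$ so that $(F,G)$ vanishes up to degree $s$, rather than assembling a single primitive $h$ all at once, but the two bookkeepings are equivalent.) The genuine gap is at the point you yourself flag as ``the main obstacle'': the justification that the induced map lands in $\mP(B)$. You argue that ``$H$-bitriviality forces the induced map on $\M(B)_{r+1}$ to land in the $H$-coinvariants of $A_1$, which are $B_1=\mP(B)$.'' This is not a valid inference: $H$-bitriviality (and the resulting $H$-linearity identities such as $F(hx,y)=hF(x,y)$, $F(x,yh)=F(x,y)h$) constrains how $F$ depends on its arguments, not where its values lie in $A$, so it does not place $F(u)$ in $A^{\co\pi}=B$, let alone in $\mP(B)$. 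Moreover, even granting coinvariance, the argument only makes sense in degree $s=r+1$ where the target is $A_1$; for $s>r+1$ the target is $A_{s-r}$ with $s-r\ge 2$, and $B_{s-r}\not\subseteq\mP(B)$ in general, while $\M(B)$ need not be concentrated in its lowest degree. Your remark that higher-degree relations are ``controlled by the base case'' therefore does not close the induction.

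What actually does the work in the paper is the \emph{mixed} compatibility condition between $F$ and $G$ in the definition of $\Zbt2(A)$, i.e.\ $\pac F(u)=-\pah G(u)$. For $u=\sum_i u^i\ot v^i\in M(B)\cap(B\ot B)_s$ with $\sum_i u^iv^i=0$, and using inductively that $G_{<s}=0$ (which kills every term involving $G$), this identity collapses to
$$\De F(u)=F(u)\ot 1+\sum_i u^i_{-1}v^i_{-1}\ot F(u^i_0,v^i_0),$$
which says precisely that $F(u)$ is a primitive element of $B$; only then does the hypothesis $\yd(\M(B),\mP(B))_\ell=0$ apply. So $G$ cannot be deferred to the very end of the argument as you propose: its vanishing in low degrees must be carried along as part of the inductive hypothesis and is used in the middle of each step, both to establish primitivity of $F(u)$ and afterwards (via the same compatibility, now with $F_{\le s}=0$) to conclude that $G$ vanishes in degree $s$ as well. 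With this substitution for your coinvariance claim, and with the primitivity argument run at every degree $s>r$ rather than only at $s=r+1$, your construction of $h$ goes through and recovers the paper's proof.
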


\begin{proof} Recall the setting of Notation \ref{notation-le}.  Assume that $F_r=0$.  By degree considerations,
one has that $F_{< r}=0$, hence $F_{\le r}=0$. Analogously, one deduces that
$G_{<r}=0$. Moreover, also from degree considerations combined with that fact that $G$ is
$A_0$-cotrivial (\emph{i.~e.}, $(\pi\ot\id)G=0=(\id\ot\pi)G$), it follows that
$G_r=0$.

Now assume that for some $s>r$, we have that $(F,G)_{<s}=0$ (as observed above this holds
for $s=r+1$).  We prove that $(F,G)\sim (F',G')$ where $(F',G')_{\le s}=0$. The
conclusion of the lemma then follows by induction.

We first show that $F_s|_{M(B)}$ lies in $\yd(\M(B),\mP(B))_\ell=0$.
Let $u=\sum_i u^i\ot v^i\in (B\ot B)_s\cap M(B)$, with all $u^i, v^i$ homogeneous of strictly positive degree,
be such that $\m(u)=\sum_i u^i v^i=0$.  Then, by degree considerations and the fact that $\m(u)=0$, we have that
\begin{itemize}
\item $(F\ot\m)\De u = F(u)\ot 1$,
\item $(\m\ot F)\De u = \sum_i u^i_{(-1)}v^i_{(-1)}\ot F(u^i_{(0)}, v^i_{(0)})$,
\item $\sum_i \De(u^i)G(v ^i)=0$,
\item $G(\sum_i u^iv^i)=0$,
\item $\sum_i G(u ^i)\De(v^i)=0$.
\end{itemize}
Hence, by the compatibility of $F$ and $G$ given by $\pa^c F(u)=-\pa^h G(u)$, or, in expanded
form $(F\ot \m)\De(u)-\De F(u)+(\m\ot F)\De u = -\sum_i \De(u^i)G(v^i) +G(m(u)) -\sum_i G(u^i)\De(v^i)$), we have
$$
\De F(u) = F(u)\ot 1 + \sum_i u^i_{(-1)}v^i_{(-1)}\ot F(u^i_{(0)}, v^i_{(0)}).
$$
Therefore, we conclude that $F(u)$ is in $\mP(B)$. Since $F$ is $H$-trivial and consequently also
$H$-stable, we have that $F$ defines an element of $\yd(\M(B),\mP(B))_\ell=0$.
Hence, $F_s|_{(B\ot B)}$ is an $\ep$-coboundary (see Remark \ref{rmk:hoch-M(B)}).
Let $T\colon B\to \Bbbk$ be the map given by $F_s(x,y)=T(xy)$ for all $x,y\in B^+$.
With no loss of generality, we assume that $T=T_s$.  Since $F_s$ is $H$-stable and the $xy$'s span
$B_s$ (generation in degrees at most $r$), we have that $T$ is $H$-stable and therefore $F_s= -\pa^h T$.
Now set $(F',G')=(F,G)-\pa^b T = (F-\pa^h T, G+\pa^c T)$.  By construction, we have that $(F',G')\sim (F,G)$,
$F'_{\le s}=0$, and $G'_{<s}=0$.  We now prove that $G'_s=0$, which completes the proof.
Let $x\in A_i$, $y\in A_j$ with $i,j>0$ and $i+j=s$.  As $F_{\le s}=0$ and $G_{<s}=0$, by the compatibility
of $F$ and $G$, we get that
$
G(xy)=0
$. From this follows that $G'_s=0$, since we are assuming that $B$, and hence also $A$,
is generated as an algebra by
elements of degree at most $r$, which implies that the $xy$'s span $A_s$.
\end{proof}

\begin{obs}\label{rmk:M(B)I} Let $A=B\#H$ be as above and let $B=\T(V)/I$
for some $V\in\ydh$ and
let $I\subseteq \T(V)_{(2)}=\bigoplus_{n\ge 2} V^{\otimes n}$ be a graded ideal;
here we assume that $B$ is graded by $\deg(v)=1$ for $v\in V$ with $v \neq 0$ (and hence generated as an algebra in degree $1$).
Then
$$\M(B)\simeq I/\big(T(V)^+ I + I T(V)^+\big).$$
Furthermore, if $I$ is generated by $R\in \ydh$, then the canonical map $R\to \M(B)$ is
an epimorphism of graded $\ydh$ modules (see \cite[Corollary 4.5]{AKM}). If $R$ is minimal
(in the sense that $R\cap (T(V)^+ I+I T(V)^+)=0)$,
then the map in question is an isomorphism.
In particular:
\begin{enumerate}
\item If $\yd(R, \mP(B))_\ell=0$ for some $\ell<0$,
then every cocycle pair $(F,G)\in\Zbt2(A)_\ell$
satisfying $F_{-\ell}=0$ is cohomologically trivial.
\item If $B=\toba(V)=\T(V)/\langle R\rangle$ is a Nichols algebra and $\yd(R,V)=0$, then every cocycle pair
$(F,G)\in \Zbt2(A)_\ell$, $\ell<0$, satisfying $F_{-\ell}=0$ is cohomologically trivial.
\end{enumerate}
\end{obs}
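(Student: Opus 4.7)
The plan is to first establish the identification $\M(B)\simeq I/(T(V)^+I+IT(V)^+)$ and then derive the remaining assertions from it. The key preliminary fact is that for the free tensor algebra $T=T(V)$ the multiplication map $m_T\colon T^+\otimes_T T^+\to T_{\geq 2}$ is an isomorphism, equivalently $\M(T)=0$: as a right $T$-module one has $T^+\simeq V\otimes T$, so $T^+\otimes_T T^+\simeq V\otimes T^+$, and under this identification $m_T$ becomes the canonical bijection $V\otimes T^+\simeq T_{\geq 2}$. Combining this with the identification $B^+\otimes_B B^+=B^+\otimes_T B^+$ (the $T$-action on $B$ factors through $B$) and with the right-exact sequence $I\to T^+\to B^+\to 0$ tensored over $T$ on both sides, one gets $B^+\otimes_B B^+\simeq (T^+\otimes_T T^+)/(I\otimes_T T^+ + T^+\otimes_T I)$, which via $m_T$ becomes $T_{\geq 2}/(T(V)^+I+IT(V)^+)$. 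The kernel of the induced multiplication map to $B^+=T^+/I$ is then visibly $I/(T(V)^+I+IT(V)^+)$, yielding the claimed isomorphism with $\M(B)$.

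Next, assume that $I$ is generated as a two-sided ideal by $R\in\ydh$. Any $x\in I$ can be written as $x=\sum_j t_jr_jt'_j$ with $r_j\in R$ and $t_j,t'_j\in T(V)$. Splitting each $t_j$ as $\varepsilon(t_j)+t_j^+\in \Bbbk\oplus T(V)^+$ and similarly for $t'_j$, and expanding, every summand containing a $t_j^+$ or $(t'_j)^+$ sits in $T(V)^+R+RT(V)^++T(V)^+RT(V)^+\subseteq T(V)^+I+IT(V)^+$. Hence modulo $T(V)^+I+IT(V)^+$ the element $x$ reduces to a scalar combination of elements of $R$, which proves that the canonical map $R\to I/(T(V)^+I+IT(V)^+)\simeq \M(B)$ is surjective; its $\ydh$-linearity is inherited from the naturality of the construction (all the ingredients $T(V)$, $I$, $R$, and the projection $T(V)\to B$ live in $\ydh$). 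Minimality is precisely the vanishing of the kernel $R\cap (T(V)^+I+IT(V)^+)$, upgrading the epimorphism to an isomorphism.

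For the cohomological items $(1)$ and $(2)$ I invoke Lemma \ref{lem:FG}. The $\ydh$-epimorphism $R\twoheadrightarrow \M(B)$ induces, by precomposition, an injection $\yd(\M(B),\mP(B))_\ell\hookrightarrow \yd(R,\mP(B))_\ell$, so the hypothesis $\yd(R,\mP(B))_\ell=0$ implies $\yd(\M(B),\mP(B))_\ell=0$. Since $B=T(V)/I$ is generated as an algebra in degree one, it is a fortiori generated in degrees $\leq r=-\ell$, so Lemma \ref{lem:FG} yields $(F,G)\in\Bbt2(A)$, proving item $(1)$. Item $(2)$ is the specialization to Nichols algebras: when $B=\toba(V)$ one has $\mP(B)=V$ by the defining property of the Nichols algebra, so $\yd(R,V)=\yd(R,\mP(B))$ and item $(1)$ applies directly. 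The crux of the argument is the first isomorphism, whose proof rests on the freeness of $T(V)$ (used both in the structural identification $T^+\simeq V\otimes T$ as right $T$-module and in recognizing $T^+\otimes_T T^+\to T_{\geq 2}$ as bijective); the remaining steps are bookkeeping with quotients and a direct appeal to Lemma \ref{lem:FG}.
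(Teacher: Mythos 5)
Your argument is correct. The paper states this remark without proof, delegating the key facts to \cite[Corollary 4.5]{AKM} and \cite[Subsection 4.1]{MW}; your computation of $\M(B)$ via the freeness of $\T(V)^+$ as a right $\T(V)$-module and right-exactness of $\otimes_{\T(V)}$, the reduction of a general element of $I$ to a scalar combination of elements of $R$ modulo $\T(V)^+I+I\T(V)^+$, and the deduction of items (1) and (2) from Lemma \ref{lem:FG} via the induced injection $\yd(\M(B),\mP(B))_\ell\hookrightarrow\yd(R,\mP(B))_\ell$ are exactly the standard arguments intended there, so your proposal faithfully supplies the omitted details.
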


From now on we assume that there exists a right integral
$\lambda\colon H\to\kk$ for $H^*$ (\emph{i.~e.}, $\id*\lambda = \lambda$) such that
\begin{enumerate}
\item $\lambda(1)=1$,
\item $\lambda$ is stable under the adjoint action of $H$.
\end{enumerate}
Note that such a map $\lambda$ always exists if $H$ is a cosemisimple unimodular Hopf
algebra, \emph{i.~ e.}, the space of right and left integrals in $H$ coincide.
In case $H$ is finite-dimensional (we are not assuming this anywhere in this section), the later holds if and only if $H$ is semisimple.

\begin{lema}\label{lem-lambda}  Let $(F,G)\in \Zbt2(A)_\ell$ for some $\ell<0$ and write $r=-\ell$.
Then the map $f\colon B\ot B\to\kk$ given by $f|_{(B\ot B)_r}=\lambda\circ F$ and $f|_{(B\ot B)_s}=0$
for $s\not=r$, is an $H$-stable $\ep$-cocycle.  Moreover, if $\tilde{f}\colon A\ot A\to\kk$ is the
induced $\ep$-cocycle, then $(F+\pac \tilde{f})_r=0$.
\end{lema}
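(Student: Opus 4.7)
The plan is to derive both assertions from the defining properties of $(F, G)$ by composing with the integral $\lambda$. For the first assertion, I evaluate the Hochschild cocycle identity
$$x F(y, z) - F(xy, z) + F(x, yz) - F(x, y) z = 0$$
on $x, y, z \in B^+$ with total degree $r$. Since $F(y, z) \in A_{-\deg x} = 0$ and similarly $F(x, y) = 0$, the first and last terms vanish, and applying $\lambda$ to the survivors yields $f(xy, z) = f(x, yz)$. For the $H$-stability, composing $\lambda$ with the identity $F(h_{(1)} \cdot x, h_{(2)} \cdot y) = h_{(1)} F(x, y) S(h_{(2)})$ and using the adjoint invariance of $\lambda$ gives $f(h_{(1)} \cdot x, h_{(2)} \cdot y) = \eps(h) f(x, y)$.

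For the second assertion, both $F_r$ and $(\pac \tilde{f})_r$ are $H$-bitrivial Hochschild $2$-cocycles into $A$ (the latter because $\tilde{f}$ is normalized and $\tilde{f}(1_B h, \cdot) = 0$ forces $\pac \tilde{f}(h, y) = 0 = \pac \tilde{f}(x, h)$), so it suffices to check the identity on $x, y \in B^+$ of total degree $r$. A direct computation of $\pac \tilde{f}(x, y)$ using the bosonization formula
$$\De_A(b) = b_{(-1)} \ot b_{(0)} + b \ot 1 + b^{[1]}(b^{[2]})_{(-1)} \ot (b^{[2]})_{(0)} \quad (b \in B^+),$$
where $\De_B(b) = 1 \ot b + b \ot 1 + b^{[1]} \ot b^{[2]}$ with $b^{[1]}, b^{[2]} \in B^+$, combined with the fact that $\tilde{f}$ is concentrated in total degree $r$ and vanishes whenever an argument is $1_B$ or lies in $H$, shows that only two of the nine possible Sweedler summands survive, yielding
$$\pac \tilde{f}(x, y) = x_{(-1)} y_{(-1)} f(x_{(0)}, y_{(0)}) - f(x, y).$$

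To close the proof I invoke the third bialgebra cocycle condition for $(F, G)$ at $(x, y)$. All three $G$-contributions on the right-hand side vanish: $G(x) = G(y) = 0$ by degree since $\deg x, \deg y < r$, while $G(xy) \in (A \ot A)_0 \cap (A^+ \ot A^+) = 0$ from the $H$-bicotriviality of $G$ applied at $xy \in A_r$. Repeating the same Sweedler analysis on the left-hand side (now in $A \ot A$) produces
$$F(x, y) \ot 1 - \De F(x, y) + x_{(-1)} y_{(-1)} \ot F(x_{(0)}, y_{(0)}) = 0$$
in $H \ot H$. Applying $(\id \ot \lambda)$ and using the integral property $\sum h_{(1)} \lambda(h_{(2)}) = \lambda(h) 1_H$ on the middle term yields $F(x, y) = f(x, y) - x_{(-1)} y_{(-1)} f(x_{(0)}, y_{(0)}) = -\pac \tilde{f}(x, y)$, as required. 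The main obstacle is the careful bookkeeping of the bosonized Sweedler terms; the conceptual crux is that the simultaneous vanishing of all $G$-contributions at the critical degree allows the bialgebra cocycle compatibility to reduce to a statement purely in $F$, which $\lambda$ can then \emph{invert} via the integral property.
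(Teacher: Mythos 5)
Your proposal is correct and follows essentially the same route as the paper: both vanish the $G$-terms in the bialgebra compatibility condition by degree and $H$-cotriviality, reduce to the identity $\De F(x,y)=F(x,y)\ot 1+x_{(-1)}y_{(-1)}\ot F(x_{(0)},y_{(0)})$ on $(B^+\ot B^+)_r$, and apply the integral property $\id*\lambda=\lambda$ to recover $F=-\pac\tilde f$ there. Your additional explicit checks (the $\ep$-cocycle condition for $f$ and the formula for $\pac\tilde f$ on $B^+\ot B^+$) are details the paper delegates to its surrounding remarks, so the argument is the same in substance.
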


\pf
Since $F$ is $H$-stable and $\lambda$ is stable under the adjoint action of $H$, we have that
$f$ is $H$-stable. Let $x,y$ be homogeneous elements of $B$ of strictly positive degrees such that
$x\ot y\in (B\ot B)_s$.  As in the proof of Lemma \ref{lem:FG}, we have that $F_{<r}=0$ and $G_{\le r}=0$
(by using degree considerations
and the fact that $G$ is $H$-cotrivial).  Hence $\pa^h G(x,y)=0$,
and the compatibility between $F$ and $G$ yields that
$$\De F(x,y)= x_{(1)}y_{(1)}\ot F(x_{(2)},y_{(2)})+ F(x_{(1)},y_{(1)})\ot x_{(2)}y_{(2)} = x_{(-1)}y_{(-1)}\ot F(x_{(0)},y_{(0)})+F(x,y)\ot 1.$$
Applying $\m(\id\ot\lambda)$ to this equality gives
$
(\id*\lambda)(F(x,y)) = x_{(-1)}y_{(-1)}f(x_{(0)},y_{(0)})+F(x,y)
$.
Since $\id*\lambda=\lambda$, we get that $F(x,y)=f(x,y)-x_{(-1)}y_{(-1)}f(x_{(0)},y_{(0)})$.  Moreover, as
$\tilde{f}|_{B\ot B}=f|_{B\ot B}$ and $f|_{<r}=0$, we have that $(F+\pac\tilde f)(x,y)=0$.
Since $F$ and $\pac\tilde f$ are uniquely determined by their values on $B\ot B$, we conclude that
$(F+\pac\tilde f)_r=0$.
\epf

We end this subsection with the proof of Theorem \ref{thm:M(B)V} and two technical remarks which are useful for calculations.

\smallbreak
\noindent \textit{Proof of Theorem A.} Let $(F,G)\in \Zbt2(A)_\ell$ for some $\ell<0$ and let $r=-\ell$ as above.  Let $f\colon B\ot B\to\kk$ be the cocycle
from Lemma \ref{lem-lambda} and $\tilde{f}\colon A\ot A\to\kk$ the induced cocycle on $A$.
Set $(F',G)=(F,G)+\tilde{\pa}_\ell f = (F'+\pac\tilde{f},G)$.
By Lemma \ref{lem-lambda}, it follows that $F'_{r}=0$. Hence, by Lemma \ref{lem:FG}
we have that $(F',G)\in\Bbt2(A)$.
Thus, the cohomology classes of $(F,G)$ and $-\tilde{\pa}_\ell f$ coincide and
consequently, the cohomology class of $(F,G)$ is in the image of $\tilde{\pa}_\ell$.
\qed

\begin{obs}\label{rmk:formula-def-gen}
Fix $\ell<0$ and set $r=-\ell$.
For $f\in\Zep2(B,\kk)^H_\ell$ and $(u,v)\in (B^+\ot B^+)_r$, it holds
$$\pac (\tilde f)(u,v)= u_{(-1)}v_{(-1)}f(u_{(0)},v_{(0)})- f(u,v).$$
Indeed, assume $u\in B_{k}$ and $v\in B_{\ell}$ with $k+\ell = r$.
Since $B = A^{\co\pi}$, we may assume that $u\in P_{k}(B)$ and $v\in P_{\ell}(B)$.
Then, $\com(u) \in u\ot 1 + u_{(-1)}\ot u_{(0)} + \sum_{i=1}^{k-1} A_{i}\ot A_{k-i}$
and $\com(v) \in v\ot 1 + v_{(-1)}\ot v_{(0)} + \sum_{i=1}^{\ell-1} A_{i}\ot A_{\ell-i}$. This implies that
$$
\pac(\tilde f)(u,v)  = u_{(1)}v_{(1)}f(u_{(2)},v_{(2)}) - f(u_{(1)},v_{(1)})u_{2}v_{2} \\
= u_{(-1)}v_{(-1)}f(u_{(0)},v_{(0)}) - f(u,v).
$$
\end{obs}

\begin{obs}\label{rmk:formula-def-part}
Fix $\ell<0$, set $r=-\ell$ and let $f\in\Zep2(B,\kk)^H_\ell$. If $B=T(V)/\langle R\rangle$ with $R \subseteq V\ot V$, then the only possible candidate for a lifting
corresponding to $f$ is
$$A_f = (\T(V)\#H)/\langle R_f \rangle,$$
where $R_f$ is obtained from $R$ by replacing every relation
$x=\sum_i u^i\ot v^i \in R\subseteq V\ot V$ by
$$x_f = x + \pac(\tilde f)(x)= x - \sum_i \big(f(u^i_{(1)},v^i_{(1)})u^i_{(2)}v^i_{(2)}- u^i_{(1)}v^i_{(1)}f(u^i_{(2)},v^i_{(2)})\big).$$
This follows from the fact that $m_{A_{f}}|_{(A\ot A)_2} = m_{A} + \pac(\tilde f)$.
Note that if $\deg(x)<r$, then $x_f=x$, and in case $\deg(x)=r$, it holds that
$$
x_f = x - \sum_i \big(f(u^i,v^i) - u^i_{(-1)}v^i_{(-1)}f(u^i_{(0)},v^i_{(0)})\big).
$$
In principle, it is hard to know if the above is indeed a lifting.
This is guaranteed if there is a multiplicative cocycle with infinitesimal part equal to $\tilde{f}$.
In the very special case when the braiding is symmetric (and hence $R$ is in degree $2$,
\emph{i.~e.}, $R\subseteq V\ot V$), this is always the case as such cocycles are produced by $e^{\tilde{f}}$.
\end{obs}

\subsection{Cohomologically homogeneous liftings}
The following results in this subsection are well-known for graded deformations of algebras \cite{BG} and it is to some extent implicit in \cite{DCY}.
We include some sketches of proofs for completeness.

\begin{lema}
Let $B=(B,m,\Delta)$ be a graded bialgebra and let $U=(B[t],m_t^U,\Delta_t^U)$ be a graded deformation.
If $\ell\in\mathbb{N}$ and $f\in \widehat{\mathrm{B}}^2_b(B)_{-\ell}$, then $U$ is equivalent to a deformation $V=(B[t],m_t^V,\Delta_t^V)$ such that
for $k<\ell$ we have $(m_k^U,\Delta_k^U)=(m_k^V,\Delta_k^V)$ and $(m_\ell^U,\Delta_\ell^U)=(m_\ell^V,\Delta_\ell^V)+f$.
Moreover, we can additionally assume that there is an isomorphism $\Phi\colon U\to V$ such that for $k<\ell$ we have that $\Phi|_{B_k}=\id|_{B_k}$.
\end{lema}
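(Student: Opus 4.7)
The plan is to gauge away $f$ at level $\ell$ by conjugating with the identity plus its trivializing cochain — the standard technique for graded deformations of algebras in Braverman--Gaitsgory and implicit in \cite{DCY} for bialgebras. Since $f=(f_m,f_\Delta)\in\Bbh2(B)_{-\ell}$, I would pick a homogeneous map $h\colon B^+\to B^+$ of degree $-\ell$ realizing $f=\pab h$, so that
$$f_m(a,b)=ah(b)-h(ab)+h(a)b, \qquad f_\Delta(c)=\De h(c)-h(c_{(1)})\ot c_{(2)}-c_{(1)}\ot h(c_{(2)}).$$
Extending $h$ $\Bbbk[t]$-linearly to $\tilde h\colon B[t]\to B[t]$, I would set
$$\Phi:=\id-t^\ell\tilde h\colon B[t]\to B[t].$$
Because $h$ has $B$-degree $-\ell$, the operator $t^\ell\tilde h$ preserves the total grading on $B[t]$, so $\Phi$ is a graded $\Bbbk[t]$-linear endomorphism. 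It is an automorphism with inverse $\Phi^{-1}=\sum_{k\geq 0}(t^\ell\tilde h)^k$, where the series collapses to a finite sum on each homogeneous piece because $\tilde h^k$ vanishes there once $k\ell$ exceeds the degree. Crucially, if $b\in B_k$ with $k<\ell$, then $h(b)\in B_{k-\ell}=0$, hence $\Phi|_{B_k}=\id|_{B_k}$, as required.

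Next I would transport $U$ across $\Phi$ to produce $V=(B[t],m_t^V,\Delta_t^V)$ via
$$m_t^V:=\Phi^{-1}\circ m_t^U\circ(\Phi\ot\Phi), \qquad \Delta_t^V:=(\Phi^{-1}\ot\Phi^{-1})\circ\Delta_t^U\circ\Phi.$$
By construction $V$ is a graded bialgebra deformation of $B$ and $\Phi\colon V\to U$ is a graded $\Bbbk[t]$-linear bialgebra isomorphism, so $U$ and $V$ are equivalent. It remains to extract the components on $B$ modulo $t^{\ell+1}$. Using $\Phi(a)=a-t^\ell h(a)$ and $\Phi^{-1}=\id+t^\ell\tilde h+t^{2\ell}\tilde h^2+\cdots$, one sees that for every $k<\ell$ all correction terms carry at least one factor $t^\ell$, forcing $m_k^V=m_k^U$ and $\Delta_k^V=\Delta_k^U$. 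At level $\ell$, the $t^\ell$-coefficient of $\Phi^{-1}\circ m_t^U\circ(\Phi\ot\Phi)$ on $a\ot b\in B\ot B$ collects $m_\ell^U(a,b)$ from the middle, $-ah(b)-h(a)b$ from $\Phi\ot\Phi$ passed through $m$, and $h(ab)$ from $\Phi^{-1}$ acting on $ab$. Hence $m_\ell^V(a,b)=m_\ell^U(a,b)-f_m(a,b)$, and an entirely analogous computation yields $\Delta_\ell^V(c)=\Delta_\ell^U(c)-f_\Delta(c)$. Equivalently, $(m_\ell^U,\Delta_\ell^U)=(m_\ell^V,\Delta_\ell^V)+f$, as desired.

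The only real work is the bookkeeping in the expansion above; there is no genuine obstacle, since the negative-degree condition on $h$ guarantees that the series defining $\Phi^{-1}$ are finite on each homogeneous component, and the explicit coboundary formulas for $f_m$ and $f_\Delta$ are precisely what makes the $t^\ell$-coefficient of the transported structures equal $-f$. The auxiliary assertion that $\Phi$ restricts to the identity below degree $\ell$ is a free consequence of the construction rather than an extra requirement to verify.
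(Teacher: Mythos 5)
Your proof is correct and follows essentially the same route as the paper's: transport the bialgebra structure of $U$ along the graded gauge automorphism $\Phi=\id \mp t^{\ell}\tilde h$ built from a cochain trivializing $f$, and read off the $t^{\ell}$-coefficient. The only difference is that you carry out the coefficient bookkeeping explicitly (and fix the sign so that the correction lands on the $U$-side, as the statement requires), whereas the paper leaves this to the reader.
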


\begin{proof} Let $s\colon B\to B$ be the homogeneous map of degree $-\ell$ such that $f=\pab s$ and consider
the $\kk[t]$-linear map $\Phi\colon B[t]\to B[t]$ given
by $\Phi|_{B}=\id+s t^k$.  The lemma then follows by setting $V$ to be the unique bialgebra structure on $B[t]$ such that $\Phi$ is a
bialgebra isomorphism, \emph{i.~e.}, $m_t^V=\Phi^{-1}\circ m_t^U\circ (\Phi\otimes \Phi)$ and $\Delta_t^V=(\Phi^{-1}\otimes\Phi^{-1})\circ \Delta_t^U\circ \Phi$.
\end{proof}

\begin{lema}[Infinitesimal cohomological difference] Let $B$ be a graded bialgebra and let
$U=(B[t],m_t^U,\Delta_t^U)$ and $V=(B[t],m_t^V,\Delta_t^V)$ be non-equivalent graded deformations.
Then $V$ is equivalent to a deformation $W=(B[t],m_t^W,\Delta_t^W)$ such that for some $\ell\in\mathbb{N}$ we have that
$(m_k^U,\Delta_k^U)=(m_k^W,\Delta_k^W)$ for $k<\ell$ and $(m_\ell^U,\Delta_\ell^U)-(m_\ell^W,\Delta_\ell^W)\in \widehat{\mathrm{Z}}_b^2(B)_{-\ell}$
represents a non-trivial cohomology class.  The number $\ell$ and the cohomology class of $(m_\ell^U,\Delta_\ell^U)-(m_\ell^W,\Delta_\ell^W)$
is uniquely determined by the equivalence classes of $U$ and $V$.
\end{lema}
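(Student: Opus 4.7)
The plan is to modify $V$ by successive equivalences so that it matches $U$ in as many initial degrees as possible, and then to identify the first obstruction as a nontrivial class in $\Hbh2(B)_{-\ell}$. The key input is the preceding lemma, which shows that a degree-$k$ discrepancy between $U$ and any deformation equivalent to $V$ can be absorbed into an equivalence fixing all strictly lower degrees, provided that discrepancy is a coboundary in $\Bbh2(B)_{-k}$.

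First, let $\ell$ be the smallest positive integer for which no deformation equivalent to $V$ can match $U$ in every degree $\le \ell$. Such $\ell$ exists because $U$ and $V$ are non-equivalent: otherwise, iterating the preceding lemma would build, layer by layer, a $\kk[t]$-linear bialgebra isomorphism $U \to V$. By choice of $\ell$, there is a deformation $W$ equivalent to $V$ with $(m_k^W,\com_k^W) = (m_k^U,\com_k^U)$ for all $k < \ell$.

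Next, I verify that $(F,G) := (m_\ell^U - m_\ell^W, \com_\ell^U - \com_\ell^W)$ defines a cocycle class in $\Hbh2(B)_{-\ell}$ and that this class is nontrivial. Expanding the associativity, coassociativity, and bialgebra compatibility axioms for $U$ and $W$ as formal power series in $t$ and isolating the coefficient of $t^\ell$, the mixed lower-order contributions cancel upon subtraction because $U$ and $W$ coincide in degrees $k<\ell$. What remains is precisely the triple of identities that constitutes $\pab(F,G) = 0$, as listed in the definition of $\Zbh2$ in the excerpt. Nontriviality is then immediate by contradiction: if $(F,G) \in \Bbh2(B)_{-\ell}$, the preceding lemma would yield a deformation equivalent to $V$ also matching $U$ in degree $\ell$, contradicting the minimality of $\ell$.

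For uniqueness, suppose $W$ and $W'$ are two deformations equivalent to $V$, both matching $U$ in every degree $<\ell$. Composing the two equivalences gives a bialgebra isomorphism $\Phi\colon W \to W'$ of the form $\Phi|_B = \id + \sum_{k\ge 1}\phi_k t^k$; the requirement that $\Phi$ preserve the agreement with $U$ in degrees $<\ell$ forces $\phi_k = 0$ for $k<\ell$. Extracting the coefficient of $t^\ell$ from $\Phi\circ m_t^W = m_t^{W'}\circ(\Phi\ot\Phi)$ and $(\Phi\ot\Phi)\circ \com_t^W = \com_t^{W'}\circ \Phi$ yields $(m_\ell^{W'}-m_\ell^W,\; \com_\ell^{W'}-\com_\ell^W) = \pab \phi_\ell$, so the two classes coincide in $\Hbh2(B)_{-\ell}$. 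The invariance of $\ell$ under the choice of representatives from the equivalence classes of $U$ and $V$ is immediate, since its defining condition is phrased purely in terms of those classes. The main obstacle I anticipate is the bookkeeping in extracting the cocycle identity from the expanded axioms, particularly the mixed compatibility between $F$ and $G$; this is exactly the computation underlying the Gerstenhaber--Schack bialgebra differential, but care is required to track which lower-order contributions vanish because of the degree-$<\ell$ alignment of $U$ and $W$.
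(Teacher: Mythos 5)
Your existence argument is essentially the paper's: iteratively absorb any coboundary discrepancy by an equivalence that is the identity in lower degrees, and if the process never terminates, the locally finite sum of the correcting maps assembles into a bialgebra isomorphism $U\to V$, contradicting non-equivalence. The verification that $(m_\ell^U,\Delta_\ell^U)-(m_\ell^W,\Delta_\ell^W)$ is a cocycle and the nontriviality-by-minimality argument are both fine (the paper takes the cocycle condition for granted). One caveat: defining $\ell$ by global minimality (``no deformation equivalent to $V$ matches $U$ through degree $\ell$'') rather than as the stopping point of the greedy process quietly presupposes part of the uniqueness statement when you identify the two; the paper's greedy formulation avoids this.

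The genuine gap is in your uniqueness argument. You assert that the composite isomorphism $\Phi\colon W\to W'$ has $\phi_k=0$ for $k<\ell$ because it must ``preserve the agreement with $U$''. There is no such constraint: $\Phi$ is whatever the composite of the two given equivalences happens to be. What actually follows from $(m_k^W,\Delta_k^W)=(m_k^{W'},\Delta_k^{W'})$ for $k<\ell$ is only that $\phi_1$ is a bialgebra $1$-cocycle (a derivation and coderivation), and for $2\le k<\ell$ that $\pab\phi_k$ equals an expression in the lower $\phi_j$'s and $m_j$'s; none of the $\phi_k$ need vanish (already any automorphism $\id+\phi_1t+\cdots$ of $W$ with $\phi_1$ a nonzero bialgebra derivation violates your claim with $W'=W$). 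Consequently, when you extract the coefficient of $t^\ell$ you obtain $(m_\ell^{W'}-m_\ell^W,\Delta_\ell^{W'}-\Delta_\ell^W)=\pab\phi_\ell$ plus cross terms involving the $\phi_j$ and $m_j$ for $j<\ell$, and you still owe an argument that these cross terms contribute only a coboundary. The paper's own proof sidesteps this entirely by not proving the uniqueness clause at all, so you have attempted more than the authors do --- but as written this step does not close.
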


\begin{proof}
If $(m_1^U,\Delta_1^U)-(m_1^V-\Delta_1^V)$ represents a nontrivial cohomology class, then we are done.
Otherwise, let $s_1\colon B\to B$ be a homogeneous map of degree $-1$ such that $(m_1^U,\Delta_1^U)-(m_1^V-\Delta_1^V)=\pab s_1$
and let $\Phi_1\colon B[t]\to B[t]$ be given by $\Phi_1=\id+s_1 t$.  Take  $W_1$ as the unique bialgebra structure on
$B[t]$ such that $\Phi_1$ is a bialgebra isomorphism; then $(m_1^U,\Delta_1^U)=(m_1^{W_1}-\Delta_1^{W_1})$.
If $(m_2^U,\Delta_2^U)-(m_2^{W_1}-\Delta_2^{W_1})$ represents a nontrivial cohomology class, then we are done. On the contrary,
take a homoheneous map $s_2\colon B\to B$  of defree $-2$ such that $(m_2^U,\Delta_2^U)-(m_2^{W_1}-\Delta_2^{W_1})=\pab s_2$,
and define $\Phi_2\colon B\to B$ by $\Phi_2|_B=\id+s_1 t+s_2 t^2=\Phi_1+t^2 s_2$. Let $W_2$ be
the unique bialgebra structure on $B[t]$ such that $\Phi_2$ is a bialgebra isomorphism.  In a similar fashion as above,
we may define $s_k, \Phi_k$ for $k\geq 3$.  We claim that
this process eventually stops.  Indeed, suppose that it does not.  We then define $W=\lim_{k\to\infty} W_k$
to be the unique bialgebra structure on $B[t]$ such that $\Phi\colon B[t]\to B[t]$ given by
$\Phi|_B=\id+\sum_{k=1}^\infty s_k = \lim_{k\to\infty} \Phi_k$ is a bialgebra isomorphism.  Since the sum $\sum_{k=1}^\infty s_k$
is locally finite it is therefore well-defined. But in such a case, we would have that $U=W$;  a contradiction.
\end{proof}

\begin{defn}[Infinitesimal difference] Let $B$ be a graded bialgebra and $U=(B[t],m_t^U,\Delta_t^U)$,
$V=(B[t],m_t^V,\Delta_t^V)$ be non-equivalent graded deformations.  Let $W$ and $\ell$
be as in the lemma above.  We define the \textit{infinitesimal difference} $U-_{\varepsilon}V$ to be the cohomological class
of $(m_\ell^U,\Delta_\ell^U)-(m_\ell^W,\Delta_\ell^W)$.  In such a case, we say that the infinitesimal difference is of degree $\ell$.
If $U$ and $V$ are equivalent, then we define $U-_\varepsilon V=0$ and say that
$\deg(U-_{\varepsilon} V)=\infty$.
If $L_1, L_2$ are liftings of $B$, we define their infinitesimal difference $L_1-_\varepsilon L_2$ to be the
infinitesimal difference between the associated graded deformations.
\end{defn}

Note that infinitesimal difference is ``associative" in each degree in the following sense:
if $\deg(L-_\varepsilon S)=\deg(S-_\varepsilon T)=\ell$, then
\begin{enumerate}
\item[$(a)$]
$\deg(L-_\varepsilon T)=\ell$ and $L-_\varepsilon T=(L-_\varepsilon S)+(S-_\varepsilon T)$, if $L-_\varepsilon S\not=-(S-_\varepsilon T)$,
\item[$(b)$] $\deg(L-_\varepsilon T)>\ell$, if  $L-_\varepsilon S=-(S-_\varepsilon T)$.
\end{enumerate}

\begin{defn} Let $(L_\lambda)_{\lambda\in\Lambda}$ be a collection of liftings of a graded bialgebra $B$.
We say that the collection is \textit{cohomologically homogeneous}, if for every $\lambda\in\Lambda$,
every $\ell\in\mathbb{N}$, and every cohomology class $\alpha\in\widehat{\mathrm{H}}_b(B)_{-\ell}$,
there exists a $\mu\in\Lambda$ such that $L_\lambda-_\varepsilon L_\mu = \alpha$.
\end{defn}

\begin{defn} We say that the lifting problem for a graded bialgebra $B$ is \textit{obstruction-free} if every partial
graded deformation (\emph{i.~e.}, deformation over $\kk[t]/(t^\ell)$ for some $\ell\in\mathbb{N}$) extends to a formal graded deformation.
\end{defn}

We end this section with the following theorem.

\begin{theorem} Let $B$ be a graded bialgebra such that $\widehat{\mathrm{H}}_b^2(B)_{-}:=\bigoplus_{k\in\mathbb{N}} \widehat{\mathrm{H}}_b^2(B)_{-k}$
is finitely graded, \emph{i.~e.}, for sufficienly large $k\in\mathbb{N}$ we have $\widehat{\mathrm{H}}_b^2(B)_{-k}=0$.  Then:
\begin{enumerate}
\item[(i)] The lifting problem for $B$ is obstruction-free if and only if there exists a cohomologically
homogeneous collection of liftings.
\item[(ii)] If $(L_\lambda)_{\lambda\in\Lambda}$ is a cohomologically homogeneous collection of liftings,
then, up to equivalence, the collection contains all liftings.
\end{enumerate}
\end{theorem}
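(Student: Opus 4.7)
The plan is to exploit the ``associativity'' clauses (a), (b) following the definition of infinitesimal difference, together with the hypothesis that $\Hbh2(B)_{-\ell}=0$ for all $\ell$ larger than some $N$. These two features let iterative ``one degree at a time'' matching of liftings terminate in finitely many steps, which is what drives both directions of (i) and the statement (ii).

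For the forward direction of (i), assume obstruction-freeness. I construct a cohomologically homogeneous collection $\mathcal L$ by closure under cocycle addition. Starting with $\mathcal L_0=\{B\}$, given $\mathcal L_n$, for every $L\in\mathcal L_n$, every $\ell\in\mathbb N$, and every class $\alpha\in \Hbh2(B)_{-\ell}$, I produce a formal deformation $L_\alpha$ with $L-_\varepsilon L_\alpha=\alpha$ at degree $\ell$ as follows. Pick a representative cocycle $(\alpha^m,\alpha^\Delta)\in\Zbh2(B)_{-\ell}$ and add $t^\ell(\alpha^m,\alpha^\Delta)$ to the degree-$\ell$ part of the multiplication and comultiplication of $L$, truncating beyond order $\ell$. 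The result is a partial deformation of order $\ell$: lower orders coincide with those of $L$ and so satisfy the bialgebra axioms modulo $t^{\ell+1}$, while the new order-$\ell$ contribution is a cocycle. By obstruction-freeness this partial deformation extends to a formal $L_\alpha$, and $L-_\varepsilon L_\alpha=\alpha$ by construction. Set $\mathcal L_{n+1}=\mathcal L_n\cup\{L_\alpha\}$ and take $\mathcal L=\bigcup_n\mathcal L_n$, which is cohomologically homogeneous by construction; finiteness of the grading on $\Hbh2(B)_-$ prevents this closure process from growing out of control.

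For the backward direction of (i) and for (ii), assume $(L_\lambda)_{\lambda\in\Lambda}$ is cohomologically homogeneous, and let $U$ be a deformation (formal for (ii), partial of some order $k$ for backward (i)). I inductively build $L^{(i)}$ from the collection with $\deg(U-_\varepsilon L^{(i)})>i$. Take $L^{(0)}$ arbitrary in the collection. Given $L^{(i)}$, if $\deg(U-_\varepsilon L^{(i)})=j\geq i+1$ with class $\alpha$, apply cohomological homogeneity at $L^{(i)}$ to obtain $L^{(i+1)}$ with $L^{(i)}-_\varepsilon L^{(i+1)}=-\alpha$; associativity clause (b) then gives $\deg(U-_\varepsilon L^{(i+1)})>j$. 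After finitely many iterations every cohomology class has been matched, so $U$ and the resulting $L^{(\infty)}$ differ only by coboundaries at each degree; a final iterated application of the first lemma of this subsection promotes these coboundary differences into a genuine equivalence of deformations. In case (ii) this gives $U$ equivalent to $L^{(\infty)}\in\mathcal L$, and in the partial case the formal deformation $L^{(\infty)}$ is an extension of the partial deformation $U$, establishing obstruction-freeness.

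The main obstacle will be the final step of turning ``degreewise coboundary agreement'' into an actual bialgebra isomorphism: one must compose the successive cobounding transformations provided by the first lemma, verify the composition is locally finite (which follows from the finiteness of the grading on $\Hbh2(B)_-$ and the fact that the $L^{(i)}$ stabilise degreewise), and simultaneously check compatibility with both the multiplication and the comultiplication. This is precisely what the explicit form of $\pab=(\pah,\pac)$ together with the cocycle conditions is designed to handle.
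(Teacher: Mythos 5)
Your proposal is correct and is in substance the paper's own argument: the paper proves (ii) by choosing $\lambda$ maximizing $\deg(L-_\varepsilon L_\lambda)$ (a maximum exists by the finitely-graded hypothesis) and deriving a contradiction from associativity clause (b), which is just the extremal repackaging of your degree-by-degree induction, while it dismisses (i) as obvious along exactly the lines you spell out. The only simplification worth noting is that the final step you flag as the ``main obstacle'' is already absorbed into the preceding lemma on infinitesimal cohomological difference: once $\deg(U-_\varepsilon L^{(i)})$ exceeds the largest $k$ with $\widehat{\mathrm{H}}_b^2(B)_{-k}\neq 0$, the two deformations are equivalent by the very definition of $-_\varepsilon$ (a nonzero infinitesimal difference must represent a nontrivial class in some negative degree), so no separate composition of cobounding transformations is needed.
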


\begin{proof} The only (somewhat) non-obvious part of the theorem is assertion (ii). Suppose on the contrary,
that $(L_\lambda)_{\lambda\in\Lambda}$ is a cohomologically homogeneous collection of liftings such that there exists a
lifting $L$ that is not equivalent to any $L_\lambda$.  Let $\ell\in\mathbb{N}$ be the largest number such
that $L-_\varepsilon L_\lambda=:\alpha$ is of degree $\ell$.  By cohomological homogeneity,
there exists $\mu\in\Lambda$ such that $L_\lambda-_\varepsilon L_\mu=-\alpha$.  But then we either
have that $L$ and $L_\mu$ are equivalent or that the cohomological difference $L-_\varepsilon L_\mu$
is of degree strictly larger then $\ell$;  a contradiction.
\end{proof}

\section{Copointed Hopf algebras over dihedral groups} \label{sec:pointed-dihedral}

\subsection{The commutative algebra $\Bbbk^{\dm}$}
As in \cite{FG}, we use the following
presentation by generators and relations
for the dihedral group of order $2m$:
\begin{equation}\label{eq:def-dihedral}
\mathbb D_m:=\langle g,h| \ g^2=1=h^m  \,\, , \,\,
gh=h^{-1}g \rangle.
\end{equation}

Because of our purposes, we
assume that $m=4a \geq 12$, $n=\frac{m}{2}= 2a$ and we fix
$\omega$ an $ m $-th primitive root of unity.

A linear basis of the group algebra $\Bbbk \dm$ is given by
$\{e_{ij}=g^{i}h^{j}:\ i=0,1,\ j=0,\ldots, m-1\}$. Let
$\{\varphi_{ij}:\ i=0,1,\ j=0,\ldots, m-1\}$ be the corresponding dual basis,
\emph{i.~e.}, $\varphi_{i,j}(e_{k,\ell})=\delta_{i,k}\delta_{j,\ell}$.
By definition, we have that
 \begin{itemize}
 \item $\varphi_{ij}\varphi_{k\ell} = \delta_{ik}\delta_{j\ell}\varphi_{ij}$.
\item  $1_{\Bbbk^{\dm}}=\sum_{i,j}\varphi_{ij}$.
 \item $\eps(\varphi_{ij})=\delta_{i,0}\delta_{j,0}$.
\item $\com(\varphi_{ij}) = \sum_{k,\ell} \varphi_{i+k,(-1)^{k}(j-\ell)}\ot \varphi_{k,\ell}$.
\item $\cS(\varphi_{ij})=\varphi_{i,(-1)^{i+1}j}$.
 \end{itemize}
\emph{Throughout this section we set $H = \Bbbk^{\dm}$}. The group
$G(H)\simeq \Z_{2}\times \Z_{2}$ is given by $\Alg(\Bbbk\dm,\Bbbk)$,
which coincides with the group of multiplicative characters $\widehat{\dm}$ given
by the one-dimensional representations of $\dm$. These are given by the following table:
\begin{table}[h]\label{table:chardm}
\begin{center}
\begin{tabular}{|c|c|c|c|c|}
\hline $z$&  $h^n$ & $h^b$, $1\leq b\leq n-1$ & $g$ & $gh$
\\ \hline  \hline $\alpha_0$ &1&1&1&1
\\ \hline $\alpha_1$ &1&1&$-1$&$-1$
\\ \hline $\alpha_2$ &$(-1)^n$& $(-1)^b$ &1&$-1$
\\ \hline $\alpha_3$ &$(-1)^n$& $(-1)^b$ &$-1$&1\\
\hline
\end{tabular}
\end{center}
\caption{Linear characters of $\dm$}
\end{table}

\noindent For $0\leq k\leq 3$, write $\alpha_{k} = \sum_{i,j} \alpha_{k}(g^{i}h^{j})\varphi_{ij}$; explicitly,
\begin{align*}
 \alpha_{0}&=
\sum_{i,j} \varphi_{ij}
 = \sum_{j} \varphi_{0j} +  \sum_{j} \varphi_{1j}=\eps,\\
 \alpha_{1} &=
 \sum_{i,j} (-1)^{i}\varphi_{ij}= \sum_{j} \varphi_{0j} -  \sum_{j} \varphi_{1j},\\
 \alpha_{2} &=
 \sum_{i,j} (-1)^{j}\varphi_{ij}
 = \sum_{j} (-1)^{j}\varphi_{0j} +  \sum_{j} (-1)^{j}\varphi_{1j},\\
 \alpha_{3} &=
 \sum_{i,j} (-1)^{i+j}\varphi_{ij}
 =\sum_{j} (-1)^{j}\varphi_{0j} -  \sum_{j} (-1)^{j}\varphi_{1j}.
\end{align*}
It holds that $\alpha_{2}\alpha_{3}=\alpha_{1}=\eps$, $\ord \alpha_{i} = 2$ with $i\neq 0$
and $G(\Bbbk^{\dm}) = \langle \alpha_{2}\rangle \times \langle \alpha_{3}\rangle \simeq \Z_{2} \times \Z_{2}$.
For $k=0,1$ and $0\leq r\leq n$,
write $\theta_{k,r} = \sum_{\ell} \omega^{r\ell}\varphi_{k,\ell}\in \Bbbk^{\dm}$. It holds that
\begin{align}
1 = \eps = \theta_{0,0} + \theta_{1,0},\quad
 \alpha_{1} &= \theta_{0,0} - \theta_{1,0},\quad \alpha_{2} = \theta_{0,n} + \theta_{1,n},\quad
 \alpha_{3} =\theta_{0,n} - \theta_{1,n}. \label{eq:theta1}\\
 \theta_{0,r}\theta_{1,s} &=0,\quad  \theta_{k,r}\theta_{k,s} =\theta_{k,r+s},\quad
 \text{ for all }k=0,1,\ 0\leq r,s \leq n.\label{eq:theta3}
\end{align}

\subsection{Yetter-Drinfeld modules and finite-dimensional Nichols algebras
over $\Bbbk^{\dm}$}\label{subsec:ydmodkdm}
As we pointed out before, the category $\ydhs$  is braided equivalent to $\ydh$.
Since finite-dimensional Nichols algebras
in $\yddm$ were classified in \cite{FG}, all finite-dimensional
Nichols algebras in $\ydkm$ are known. For details concerning
simple objects in $\yddm$ see \cite[Section 2]{FG}.

The irreducible Yetter-Drinfeld modules that give rise
to finite-dimensional Nichols algebras are associated with
the conjugacy classes of $h^{n}$ and $h^{i}$ with $1\leq i < n$, see
\cite[Table 2]{FG}. In the following, we describe these modules explicitly as well as
the families of reducible Yetter-Drinfeld modules with finite-dimensional Nichols
algebras associated with them.

Recall that the non-trivial conjugacy classes of $ \dm $ and the corresponding
centralizers are
\begin{itemize}
\item $\Oc_{h^{n}}= \{h^{n}\}$ and $C_{\dm}(h^{n})= \dm$.
\item $\Oc_{h^{i}}= \{h^{i}, h^{m-i}\}$ and $C_{\dm}(h^{i})=
\langle
h \rangle \simeq  \Z/(m)$, for $1\leq i <n$.
\item $\Oc_{g}= \{gh^{j}: j\text{ even}\}$ and $ C_{\dm}(g) =
\langle g \rangle \times \langle h^{n}\rangle \simeq \Z/(2) \times \Z/(2) $.
\item$\Oc_{gh}= \{gh^{j}:j\text{ odd}\}$ and
$ C_{\dm}(gh) =
\langle gh \rangle \times \langle h^{n}\rangle \simeq \Z/(2) \times \Z/(2) $.
\end{itemize}

\subsubsection{Yetter-Drinfeld modules and Nichols algebras associated with $\oc_{h^i}$, with $1\leq i <n$}
\label{subsubsec:ydmodmike}
For $0\leq k <m$, denote by
$\Bbbk_{\chi^{k}}$ the simple representation of $C_{\dm}(h^{i}) = \langle
h \rangle \simeq  \Z/(m)$
given by the character $\chi^{k}(h) = \omega^{k}$.

Take $e$ and $g$ as representatives of left cosets in $\dm / \langle h \rangle$,
with $h^{i} = eh^{i}e$ and $h^{m-i} = gh^{i}g$.
Then $M_{(i,k)} = M(\oc_{h^i},\chi^{k}) \in \ydkm$  is
spanned linearly by the elements
$y^{(i,k)}_{1} = e\ot 1$ and $y^{(i,k)}_{2} = g\ot 1$.
Its Yetter-Drinfeld module structure is given by
\begin{align*}
\varphi_{rs}\cdot y^{(i,k)}_{1}  &= \varphi_{rs}(\cS(h^{i})) y^{(i,k)}_{1}
= \delta_{r,0}\delta_{s,-i}\, y^{(i,k)}_{1},  \quad &
\varphi_{rs}\cdot y^{(i,k)}_{2}  &= \varphi_{rs}(\cS(h^{-i})) y^{(i,k)}_{2}
=\delta_{r,0}\delta_{s,i}\, y^{(i,k)}_{2},\\
\nonumber \lambda(y^{(i,k)}_{1}) &  =  \theta_{0, -k} \ot y^{(i,k)}_{1}
+ \theta_{1,k} \ot y^{(i,k)}_{2},\quad &
\lambda(y^{(i,k)}_{2}) &  =  \theta_{1,-k} \ot y^{(i,k)}_{1}
+ \theta_{0,k} \ot y^{(i,k)}_{2}.
\end{align*}
The irreducible modules with finite-dimensional Nichols algebra
are the ones given by the pairs $(i,k)$  satisfying that $\omega^{ik}=-1$.
We set $J=\{(i,k):\ 1\leq i< n, 1\leq k < m \text{ such that }\omega^{ik}=-1\}$.
By
 \cite[Theorem 3.1]{AF-alt-die}, one has
$\toba(\oc_{h^i},  \chi^{k}) \simeq \bigwedge
M_{(i,k)}$, for all $(i,k)\in J$, and
 $\dim \toba(\oc_{h^i},
\chi^{k})= 4$.

\begin{exa}
Consider $M_{i,n}=\Bbbk\{a_{1},a_{2}\} \in  \ydkm$ with $i$ odd and $1\leq i \leq n-1$.
If we take $x=a_{1}+a_{2}$ and $y=a_{1}-a_{2}$ we have that
\begin{align*}
\lambda(x) & = \theta_{0,n} \ot a_{1} + \theta_{1,n} \ot a_{2}  +
\theta_{1,n} \ot a_{1} + \theta_{0,n} \ot a_{2}
= (\theta_{0,n}+\theta_{1,n})\ot (a_{1} + a_{2}) = \alpha_{2}\ot x,\\
\lambda(y) & = \theta_{0,n} \ot a_{1} + \theta_{1,n} \ot a_{2}  -
\theta_{1,n} \ot a_{1} - \theta_{0,n} \ot a_{2}
= (\theta_{0,n}-\theta_{1,n})\ot (a_{1} - a_{2}) = \alpha_{3}\ot y,
\end{align*}
that is, both elements are homogeneous with respect to the coaction of $\Bbbk^{\dm}$ on $M_{i,n}$. Thus, we may present $M_{i,n} \in \ydkm$ as the
Yetter-Drinfeld modules spanned linearly by $x$ and $y$ with its structure given by
\begin{align*}
\lambda(x) & = \alpha_{2}\ot x,\qquad\qquad \qquad \lambda(y) = \alpha_{3}\ot y,\\
 \varphi_{k\ell}\cdot x & = \varphi_{k\ell}\cdot (a_{1} + a_{2}) =  \delta_{k,0}\delta_{i,-\ell}a_{1} +
 \delta_{k,0}\delta_{i,\ell}a_{2}=
 \delta_{k,0}(\delta_{i,-\ell}\frac{x+y}{2}
+ \delta_{i,\ell}\frac{x-y}{2}),\\
\varphi_{k\ell}\cdot y & = \varphi_{k\ell}\cdot (a_{1} - a_{2})  = \delta_{k,0}\delta_{i,-\ell}a_{1} -
 \delta_{k,0}\delta_{i,\ell}a_{2}=
\delta_{k,0}(\delta_{i,-\ell}\frac{x+y}{2}
- \delta_{i,\ell}\frac{x-y}{2}),\\
\alpha_{2}\cdot x &= -x,\qquad\qquad\qquad \alpha_{3}\cdot x = -x,	\\
\alpha_{2}\cdot y &= -y,\qquad\qquad\qquad\alpha_{3}\cdot y = -y.
\end{align*}
\end{exa}

Consider now the set
$\II$  of all sequences of finite length of lexicographically ordered pairs
$((i_{1},k_{1}),\ldots,(i_{r},k_{r}))$ such that  $(i_{s},k_{s}) \in J$
and $\omega^{i_{s}k_{t}+ i_{t}k_{s}} = 1$ for
all $1\leq s,t\leq r$.

For $I = ((i_{1},k_{1}),\ldots,(i_{r},k_{r})) \in \II$, we define $M_{I}=
\bigoplus_{1\leq j\leq r}M_{(i_{j},k_{j})}$.
By \cite[Proposition 2.5]{FG}, we have that
$\toba(M_{I})\simeq \bigwedge M_{I}$  and
$\dim\toba(M_{I}) = 4^{|I|}$, where $|I|=r$ denotes the length of $I$.

\subsubsection{Yetter-Drinfeld modules and Nichols algebras associated with $\oc_{h^n}$}
\label{subsubsec:ydmodmel}
Since
$h^n$ is central, $\oc_{h^n}=\{h^n\}$ and $C_{\dm}(h^n)=\dm$.
The irreducible representations of $\dm$ are
of degree 1 or 2. Explicitly, there are:
\begin{itemize}
  \item[(i)] $n-1=\frac{m-2}{2}$ irreducible representations of degree 2 given by
$\rho_\ell:\dm\to\GL(V)$ with
\begin{align}\label{eq:repdeg2}
\rho_\ell (g^ah^b)=\begin{pmatrix} 0 & 1 \\
                  1 & 0 \\
                \end{pmatrix}^a
 \begin{pmatrix}
                  \omega^\ell & 0 \\
                  0 & \omega^{-\ell} \\
                \end{pmatrix}^b, \quad \ell\in \N\text{ odd with }1\leq \ell<n.
\end{align}
\item[(ii)] 4 irreducible representations of degree 1 given in Table \ref{table:chardm}.
\end{itemize}

The irreducible Yetter-Drinfeld modules with finite-dimensional Nichols algebra
are the ones given by the two-dimensional representations
$\rho_{\ell}$ with $\ell\in \N$ odd.

Fix $\ell \in \N$ odd with $1\leq \ell <n$ and consider
the two-dimensional simple representation $(\rho_{\ell}, V)$ of
$\dm$ described in \eqref{eq:repdeg2}
above.
Then
$M_{\ell} = M(\oc_{h^n},\rho_\ell)  \in \ydkm$ is
spanned linearly by the elements
$ x^{(\ell)}_{1}, x^{(\ell)}_{2}$, and
its Yetter-Drinfeld module structure is given by
\begin{align}\label{eq:dualmel}
\varphi_{ij}\cdot x^{(\ell)}_{k}  &=  \varphi_{ij}(\cS(h^{n}))x^{(\ell)}_{k}
= \delta_{i,0}\delta_{j,n}x^{(\ell)}_{k},  \quad k=1,2 & \text{ and }\\
\nonumber \lambda(x^{(\ell)}_{1}) &  =  \theta_{0,-\ell} \ot x^{(\ell)}_{1}
+ \theta_{1,\ell} \ot x^{(\ell)}_{2},\quad &
\lambda(x^{(\ell)}_{2}) &  =  \theta_{1,-\ell} \ot x^{(\ell)}_{1}
+ \theta_{0,\ell} \ot x^{(\ell)}_{2}.
\end{align}
By
 \cite[Theorem 3.1]{AF-alt-die}, one has that
$\toba(\oc_{h^n}, \rho_{\ell}) \simeq \bigwedge
M_{\ell}$, and consequently
 $\dim \toba(\oc_{h^n},
\rho_{\ell})= 4$.

Consider now the set
$\Lc$  of all sequences of finite length $(\ell_{1},\ldots,\ell_{r})$ with
$ \ell_{i} \in \N$  odd  and
$1\leq \ell_{1},\ldots, \ell_{r}<n$.
Then, for $L = (\ell_{1},\ldots,\ell_{r}) \in \Lc$ we define
$M_{L} = \bigoplus_{1\leq i \leq r} M_{\ell_{i}}$.
Clearly, $M_{L} \in \yddm$ is \emph{reducible} and
by \cite[Proposition 2.8]{FG}, we have that
$\toba(M_{L})\simeq \bigwedge M_{L} $ and $\dim
\toba(M_{L})=4^{|L|}$, where $|L|=r$ denotes the length of $L$.

\subsubsection{Yetter-Drinfeld modules
and Nichols algebras associated with mixed classes}\label{subsubsec:ydmodmelandmike}
Finally, we describe a family of reducible Yetter-Drinfeld modules
given by direct sums of the modules described above.

Let $\K$ be the set of all pairs of sequences of finite length
$(I,L)$ with $I= ((i_{1},k_{1}),\ldots,(i_{r},k_{r})) \in \II$ and
$L = (\ell_{1},\ldots,\ell_{s})\in \Lc$ such that $k_{j}$ is odd
for all $1\leq j\leq r$ and $\omega^{i_{j}\ell_{t}}=-1$
for all $1\leq j\leq r$ and $1\leq t\leq s$.

As before, for $(I,L) \in \K$ we define $M_{I,L} =
\left(\bigoplus_{1\leq j\leq s} M_{(i_{j},k_{j})}\right)\oplus
\left(\bigoplus_{1\leq t\leq s} M_{\ell_{t}}\right) $.
By \cite[Proposition 2.12]{FG}, we have that
$\toba(M_{I,L}) \simeq \bigwedge M_{I,L}$ and $\dim \toba(M_{I,L})
= 4^{\vert I \vert + |L|} $.

We end this subsection with the classification of
finite-dimensional Nichols algebras over $\kdm$. It follows from
the braided equivalence between $\yddm$ and $\ydkm$.
\begin{theorem}\cite[Theorem A]{FG}\label{thm:all-nichols-dm}
Let $\toba(M)$ be a finite-dimensional Nichols algebra
in $ \ydkm $. Then $\toba(M)\simeq \bigwedge M$, with $M$
isomorphic
either to $M_{I}$, or to
$M_{L}$, or to $M_{I,L}$, with $I\in \II$, $L\in \Lc$ and
$(I,L) \in \K$, respectively.\qed
\end{theorem}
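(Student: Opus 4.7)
The plan is to reduce the classification over $\kdm$ to the already-known classification over $\kk\dm$ by invoking the braided equivalence between Yetter-Drinfeld categories. Since $\kdm$ is finite dimensional with $(\kdm)^{*}=\kk\dm$, Proposition 2.2.1 of \cite{AG} (recalled in Subsection \ref{subsec:ydCG-nichols-alg}) gives a braided monoidal equivalence
\[
F\colon \yddm \;\xrightarrow{\;\sim\;}\; \ydkm,
\]
implemented concretely by the formulas \eqref{eq:dualyd}. A braided tensor equivalence sends Nichols algebras to Nichols algebras of the image braided vector space and preserves dimension, since the Nichols algebra depends only on the underlying braided vector space (see Subsubsection \ref{subsubsec:nichols-alg}). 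Therefore a module $N\in\ydkm$ has finite-dimensional Nichols algebra if and only if $M:=F^{-1}(N)\in\yddm$ does, and in that case $\dim\toba(N)=\dim\toba(M)$ with $\toba(N)\simeq F(\toba(M))$.

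The next step is to invoke \cite[Thm. A]{FG} (which is the classification in $\yddm$): every finite-dimensional Nichols algebra in $\yddm$ is of the form $\bigwedge M$ with $M$ isomorphic to one of $M_I$, $M_L$, or $M_{I,L}$ for suitable $I\in\II$, $L\in\Lc$, $(I,L)\in\K$. Applying the functor $F$ term by term and comparing with the explicit descriptions of the $\ydkm$-structures given in Subsubsections \ref{subsubsec:ydmodmike}, \ref{subsubsec:ydmodmel}, \ref{subsubsec:ydmodmelandmike}, one checks that $F$ sends the $\yddm$-modules denoted $M_{(i,k)}$, $M_{\ell}$, $M_I$, $M_L$, $M_{I,L}$ in \cite{FG} to the $\ydkm$-modules bearing the same names here. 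This is precisely the content of the formulas \eqref{eq:dualyd}, applied to the irreducible pieces parametrized by the conjugacy class data listed at the start of Subsection \ref{subsec:ydmodkdm}; for the mixed/reducible modules the same holds coordinatewise because $F$ is additive.

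Finally, because $F$ is a braided \emph{monoidal} equivalence, it preserves the exterior-algebra presentation $\toba(M)\simeq\bigwedge M$ (which is a statement about the braiding being a negative of a transposition on each summand, hence symmetric), together with the dimension count $4^{|I|}$, $4^{|L|}$, $4^{|I|+|L|}$ established in \cite[Props. 2.5, 2.8, 2.12]{FG}. Putting these ingredients together yields the theorem. The only non-routine point is the translation of the parameter sets $\II$, $\Lc$, $\K$ between the two categories, which however is immediate since these sets are defined purely combinatorially in terms of $m$ and the root of unity $\omega$, and therefore remain unchanged under $F$.
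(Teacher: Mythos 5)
Your proposal is correct and follows essentially the same route as the paper, which states this result as a direct consequence of \cite[Thm. A]{FG} combined with the braided equivalence $\yddm\simeq\ydkm$ of \cite[Prop. 2.2.1]{AG} (this is exactly why the theorem appears with a citation and no written proof). Your additional observations --- that the equivalence preserves the underlying braided vector space and hence the Nichols algebra and its dimension, and that the parameter sets $\II$, $\Lc$, $\K$ are purely combinatorial and so unaffected --- are accurate and simply make explicit what the paper leaves implicit.
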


\medbreak

\subsection{Finite-dimensional copointed Hopf algebras
over ${\dm}$}\label{subsec:hadm}
In this section we describe
all finite-dimensional
Hopf algebras $A$ such that the corradical
$A_{0}$ is isomorphic to
$\Bbbk^{\dm}$,
for $m=4a \geq 12$. Note that these algebras
have the Chevalley property.

Using again the braided equivalence $\yddm\cong \ydkm$, we have the following result.

\begin{theorem}\cite[Theorem 3.2]{FG}\label{thm:item-c}
Let $A$ be a finite-dimensional Hopf algebra with
$A_{0} = \Bbbk^{\dm}$. Then $A$ is generated in degree one, that is, by its first term
$A_{1}$ in the coradical filtration.\qed
\end{theorem}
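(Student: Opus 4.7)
The approach is to exploit the braided equivalence $\ydkm \simeq \yddm$ to reduce the claim to the analogous result for pointed Hopf algebras over $\dm$, which is available from \cite{FG}. Given $A$ with $A_0\simeq \kk^{\dm}$, since $\kk^{\dm}$ is cosemisimple, there is a Hopf algebra projection $\pi\colon \gr A \to A_0$ splitting the inclusion, and by Radford--Majid bosonization we may write $\gr A \simeq R\# \kk^{\dm}$, with $R=(\gr A)^{\co\pi}$ a graded connected braided Hopf algebra in $\ydkm$. Setting $V=R(1)$, one has the natural inclusion $\toba(V)\hookrightarrow R$ as the subalgebra generated by the primitives, and the statement ``$A$ is generated by $A_1$'' is equivalent to the equality $R=\toba(V)$.

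Next, transport the situation to the pointed side. The braided equivalence $F\colon \ydkm\to\yddm$ of \cite[Prop.\ 2.2.1]{AG} sends graded connected braided Hopf algebras to graded connected braided Hopf algebras and preserves dimension; in particular it sends $R$ to a finite-dimensional graded connected braided Hopf algebra $\widetilde R\in\yddm$ with $\widetilde R(1)=F(V)=:\widetilde V$, and sends $\toba(V)$ to $\toba(\widetilde V)$, because the Nichols algebra is characterized by purely categorical properties (primitives equal degree one, generation by degree one, connectedness). Hence $R=\toba(V)$ holds in $\ydkm$ if and only if $\widetilde R=\toba(\widetilde V)$ holds in $\yddm$.

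Then I apply the classification on the pointed side. Since $\widetilde R$ is finite-dimensional, $\toba(\widetilde V)$ is finite-dimensional, so Theorem \ref{thm:all-nichols-dm} (via the equivalence this is exactly the classification from \cite{FG}) forces $\widetilde V$ to be one of $M_I$, $M_L$, or $M_{I,L}$, and $\toba(\widetilde V)\simeq \bigwedge \widetilde V$. Now $\widetilde R\# \kk\dm$ is a finite-dimensional pointed Hopf algebra with coradical $\kk\dm$, so one invokes the generation-in-degree-one result for such pointed Hopf algebras established in \cite{FG}: its diagram must coincide with the Nichols algebra of its infinitesimal braiding, giving $\widetilde R=\toba(\widetilde V)$. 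Transporting back via $F^{-1}$ yields $R=\toba(V)$, whence $A$ is generated by $A_1$.

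The main obstacle is the pointed-case input in the third paragraph: one must show that no ``extra'' primitive elements can appear in degrees $\geq 2$ of $\widetilde R$, beyond those forced by $\widetilde V$. In \cite{FG} this is handled case by case using the explicit exterior-algebra structure $\toba(\widetilde V)\simeq \bigwedge \widetilde V$: for each possible simple summand $M_{(i,k)}$ or $M_\ell$, the braided commutators exhausting the relations already generate a coideal in $T(\widetilde V)$ of the correct codimension, and any hypothetical additional primitive in higher degree would contradict either the braided commutation relations arising from the $\dm$-grading or the finite-dimensionality. Once this analysis has been carried out on the pointed side, the categorical transfer described above delivers the copointed statement essentially for free.
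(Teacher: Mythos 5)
Your proposal is correct and matches the paper's approach: the paper states this theorem with no proof beyond citing \cite[Thm. 3.2]{FG} together with the remark that it transfers through the braided equivalence $\yddm\cong\ydkm$, which is exactly the reduction you carry out. Your extra paragraphs merely spell out that transfer and sketch how \cite{FG} handles the pointed case, so there is no substantive difference in method.
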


\begin{obs}\label{rmk:kdm-inv}
To find all possible Hopf algebras $A$ with $A_{0}=\kk^{\dm}$ we will use the
theory developed in Section \ref{sec:cohomology}. Assume $\gr A \simeq B\# \kk^{\dm}$ with
$B\in \ydkm$.
Let $b \in B$ and denote
by $\lambda(b) = b_{(-1)}\ot b_{(0)}$ the coaction of $b \in \yddm$.
Recall that the action of $\kk^{\dm}$ on $B$ is given by
$f\cdot b = f(\cS(b_{(-1)}))b_{(0)}$. By the definition of the $\kk^{\dm}$-action on
$ \Zep2(B,\kk)$ given in Subsection \ref{subsec:partialinv}, we have that a $2$-cocycle
$\eta \in \Zep2(B,\kk)$ is $\kk^{\dm}$-invariant  if and only if
$$f(\cS(y_{(-1)}x_{(-1)}))\eta(x_{(0)},y_{(0)}) = f(1)\eta(x,y)\qquad\text{
for all }f\in \kk^{\dm},\ x,y \in B.$$
\end{obs}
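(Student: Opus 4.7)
The plan is to derive the stated characterization of $\kdm$-invariance by directly unwinding the definition of the $\kdm$-action on $\Zep2(B,\kk)$ given in Subsection \ref{subsec:partialinv}, together with the duality between comultiplication on $\kdm$ and multiplication on $\dm$.

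First, I would recall that $\eta$ is $\kdm$-invariant precisely when $f \cdot \eta = \ep(f) \eta$ for every $f \in \kdm$, where the action is defined by $(f\cdot\eta)(x,y) = \eta(f_{(1)}\cdot x, f_{(2)}\cdot y)$. Since $\kdm$ is the algebra of functions on $\dm$, the counit is evaluation at the identity, so $\ep(f) = f(1)$; thus invariance means
$$(f\cdot\eta)(x,y) = f(1)\,\eta(x,y) \qquad \text{for all } f\in\kdm,\ x,y\in B.$$

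Next, I would substitute the formula $f\cdot b = f(\cS(b_{(-1)}))b_{(0)}$ (which is the $\kdm$-action coming from \eqref{eq:dualyd}) into both arguments of $\eta$. Using $\kk$-bilinearity of $\eta$, the scalars $f_{(1)}(\cS(x_{(-1)}))$ and $f_{(2)}(\cS(y_{(-1)}))$ factor out, giving
$$(f\cdot\eta)(x,y) \;=\; f_{(1)}(\cS(x_{(-1)}))\,f_{(2)}(\cS(y_{(-1)}))\,\eta(x_{(0)},y_{(0)}).$$

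Then I would use that the comultiplication on $\kdm$ is dual to the multiplication on $\dm$, i.e.\ $f_{(1)}(a)f_{(2)}(b) = f(ab)$ for all $a,b \in \kk\dm$, together with the anti-multiplicativity of the antipode, $\cS(a)\cS(b) = \cS(ba)$, to combine the two pairings into
$$f_{(1)}(\cS(x_{(-1)}))\,f_{(2)}(\cS(y_{(-1)})) \;=\; f\bigl(\cS(x_{(-1)})\cS(y_{(-1)})\bigr) \;=\; f\bigl(\cS(y_{(-1)}x_{(-1)})\bigr).$$
Putting the pieces together gives $(f\cdot\eta)(x,y) = f(\cS(y_{(-1)}x_{(-1)}))\,\eta(x_{(0)},y_{(0)})$, so equating with $f(1)\eta(x,y)$ for every $f,x,y$ produces the stated criterion. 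No substantial obstacle is expected; the only points that require care are the order reversal when passing $\cS$ through the product and the fact that the pairing $f_{(1)}\otimes f_{(2)}$ against arbitrary elements of $\kk\dm$ (rather than just group-like ones) is used, which is automatic by $\kk$-bilinear extension from the group-element case.
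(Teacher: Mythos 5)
Your proposal is correct and follows exactly the route the paper leaves implicit: unwind the definition $(f\cdot\eta)(x,y)=\eta(f_{(1)}\cdot x,f_{(2)}\cdot y)$ using the action $f\cdot b=f(\cS(b_{(-1)}))b_{(0)}$, combine the two evaluations via $f_{(1)}(a)f_{(2)}(b)=f(ab)$ and the anti-multiplicativity of $\cS$, and use that the counit of $\kdm$ is evaluation at the identity. The paper states this remark without proof precisely because the verification is this direct computation, and your handling of the order reversal $\cS(x_{(-1)})\cS(y_{(-1)})=\cS(y_{(-1)}x_{(-1)})$ is the only point of care, which you address correctly.
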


As said before, any $2$-cocycle deformation of a coradically graded Hopf algebra $A$ is a lifting.
In general, the converse is not known to be true. In case the coradical is $\kk^{\dm}$,
the converse holds by the following result.

\begin{theorem}\label{thm:defcocycles}
Let $A$ be a finite-dimensional Hopf algebra with $A_{0}\simeq \kk^{\dm}$. Then $A$ is a cocycle
deformation of $\toba(M)\# \kk^{\dm}$ for some finite-dimensional
Nichols algebra $\toba(M) \in \ydkm$.
\end{theorem}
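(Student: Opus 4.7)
The plan is to apply the cohomological framework developed in Section \ref{sec:cohomology} to the finite-dimensional Nichols algebras in $\ydkm$ classified in Theorem \ref{thm:all-nichols-dm}. By Theorem \ref{thm:item-c}, $A$ is generated in degree one, so $\gr A \simeq \toba(M)\#\kk^{\dm}$ for some $M\in \ydkm$ whose Nichols algebra is finite-dimensional. By Theorem \ref{thm:all-nichols-dm}, $\toba(M) \simeq \bigwedge M$ for $M$ one of $M_I$, $M_L$, or $M_{I,L}$. In particular, $\toba(M)$ is quadratic with relation space $R\subseteq M\ot M$, and the braiding on $M$ is symmetric (i.e., $c^2=\id$), so the exponentiation result of \cite{GM} applies.

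The first step is to verify that $\yd(R,M)_{-1}=0$ for each of the three families above. Since $\toba(M)$ is quadratic, $\M(\toba(M)) \simeq R$ is concentrated in degree $2$ (see Remark \ref{rmk:M(B)I}) and $\mP(\toba(M))=M$ is concentrated in degree $1$, so the only negative $\ell$ relevant to the hypothesis of Theorem \ref{thm:M(B)V} is $\ell=-1$. The verification relies on the explicit Yetter-Drinfeld structure of Subsection \ref{subsec:ydmodkdm}: the $\kk^{\dm}$-coaction on $M$ is described via the idempotents $\theta_{k,r}$ and the characters $\alpha_k$, and this enables a coaction-grading argument to show that no non-zero $\ydkm$-morphism from $R$ to $M$ can exist.

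Once the vanishing is established, Theorem \ref{thm:M(B)V} gives surjectivity of
\[
\tilde{\pa}_\ell\colon \Hep2(\toba(M),\kk)^{\kk^{\dm}}_\ell \to \Hbt2(\toba(M)\#\kk^{\dm})_\ell
\]
for all $\ell<0$. For any $f\in \Hep2(\toba(M),\kk)^{\kk^{\dm}}_\ell$, the exponential $\sigma=e^{\tilde{f}}$ of \cite{GM} produces a multiplicative $2$-cocycle on $\toba(M)\#\kk^{\dm}$ whose infinitesimal part is $\tilde{f}$, and its cocycle deformation is a lifting realizing the cohomology class of $\tilde{\pa}_\ell(f)$. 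Hence the collection of all cocycle deformations of $\toba(M)\#\kk^{\dm}$ is cohomologically homogeneous in the sense of Section \ref{sec:cohomology}. Since $A$ is finite-dimensional, $\Hbt2(\toba(M)\#\kk^{\dm})$ is finitely graded, so the last theorem of Section \ref{sec:cohomology} implies that every lifting, and in particular $A$, is equivalent to a cocycle deformation of $\toba(M)\#\kk^{\dm}$.

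The main obstacle will be the case-by-case verification that $\yd(R,M)_{-1}=0$. Because the coaction of $\kk^{\dm}$ on $M$ is not homogeneous (as emphasized in the Introduction), a simple diagonal degree argument is unavailable. Instead, the constraints $\omega^{ik}=-1$ defining $J$, together with the compatibility conditions defining $\II$, $\Lc$, and $\K$, must be leveraged to control the $\kk^{\dm}$-coaction on $R$ and rule out morphisms into $M$. Once this is done, the rest of the proof follows formally from the machinery already developed.
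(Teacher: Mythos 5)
Your proposal follows essentially the same route as the paper's proof: reduce to $\gr A\simeq\toba(M)\#\kk^{\dm}$ via Theorems \ref{thm:item-c} and \ref{thm:all-nichols-dm}, realize the infinitesimal part of the corresponding formal deformation as $\pac(\tilde{\eta})$ for a $\kk^{\dm}$-invariant Hochschild cocycle $\eta$ (necessarily of degree $-2$ since $\M(\toba(M))$ sits in degree $2$), and exponentiate using the symmetry of the braiding and \cite{GM}. The one point you defer, the verification that $\yd(R,M)_{-1}=0$, is likewise left implicit in the paper, and it does go through by combining the $\dm$-degree and $h$-eigenvalue constraints with the conditions $\omega^{ik}=-1$ and the compatibilities defining $\II$, $\Lc$ and $\K$, exactly as you anticipate.
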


\pf
Since the coradical $A_{0}$ is a Hopf subalgebra, we know that $\gr A \simeq B\#\kk^{\dm}$
for some finite-dimensional braided Hopf algebra $B\in \ydkm$,  and $A$ is a lifting of $B\#\kk^{\dm}$.
Moreover, by Theorem \ref{thm:all-nichols-dm},
$B\simeq \toba(M)$ for some $M\in \ydkm$.
By Remark \ref{rmk:lifting=def} and Theorem \ref{thm:M(B)V}, $A$ then corresponds to a formal deformation with
infinitesimal part given by $\partial^c(\tilde\eta)$, for some
$\eta \in \Zep2(B,\kk)^{\kk^{\dm}}$. On the other hand, since the braiding in
$\ydkm$ is symmetric, by \cite[Corollary 2.6]{GM},
the map $\sigma = e^{\tilde{\eta}}
=\sum_{i=0}^\infty \frac{\tilde{\eta}^{* i}}{i!}\colon \gr A\otimes \gr A\to \kk
$ is a normalized multiplicative $2$-cocycle with infinitesimal part equal to $\tilde{\eta}$. Hence, by
Remark \ref{rmk:formula-def-part}, we have that $(\gr A)_{\sigma}\simeq A$ because $B$ is quadratic.
\epf

\begin{obs}\label{rmk:H2BM}
Let $B=\toba(V)$ be a finite-dimensional Nichols algebra
in $ \ydkm $, $\{x_{j}\}_{j\in J}$ a basis of $V$ and
$\{d_{j}\}_{j\in J}$ its dual basis.
Since $B$ is an exterior algebra,
by Remark \ref{rmk:M(B)I} we have that
$M(B)$ is linearly generated by $\{x_{j}\ot x_{k} + x_{k}\ot x_{j}\}_{j,k\in J}$
and consequently, by Remark \ref{rmk:hoch-M(B)}
it follows that
$ \Zep2(B,\kk)$ is linearly generated by the symmetric
functionals $\{d_{j}\ot d_{k} + d_{k}\ot d_{j}\}_{j,k\in J}$.
\end{obs}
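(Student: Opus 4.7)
The plan is to invoke the two preceding remarks in sequence: first identify $\M(B)$ via Remark \ref{rmk:M(B)I}, then transfer this description to $\Zep2(B,\kk)$ via Remark \ref{rmk:hoch-M(B)}. By Theorem \ref{thm:all-nichols-dm}, $B=\toba(V)\simeq \bigwedge V$, and hence $B = T(V)/I$ where the defining ideal $I$ is generated by $R=\mathrm{span}\{x_j\ot x_k + x_k\ot x_j: j,k\in J\}\subseteq V\ot V$, the symmetric tensors in $V\ot V$.

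For the first stage, since $R$ is concentrated in tensor degree $2$ while $T(V)^{+}I + I T(V)^{+}$ lives in degrees $\geq 3$, we have $R\cap (T(V)^{+}I + I T(V)^{+})=0$. Thus $R$ is a minimal $\ydh$-submodule of generators for $I$, and by the second part of Remark \ref{rmk:M(B)I}, the canonical morphism $R\to \M(B)$ is an isomorphism. This yields the claimed generating set $\{x_j\ot x_k + x_k\ot x_j\}_{j,k\in J}$ for $\M(B)$.

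For the second stage, Remark \ref{rmk:hoch-M(B)} provides the isomorphism $\hat{\varphi}\colon \Hom(\M(B),\kk)\stackrel{\sim}{\to}\Hep2(B,\kk)$, $f\mapsto f_\varphi$. The dual basis of $\{x_j\ot x_k + x_k\ot x_j\}_{j,k\in J}$ in $\Hom(\M(B),\kk)$ is, up to scalar normalization, exactly the family of symmetric functionals $\{d_j\ot d_k + d_k\ot d_j\}_{j,k\in J}$. Transporting this basis through $\hat{\varphi}$ produces the desired generating set of cocycle representatives in $\Zep2(B,\kk)$.

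The main subtlety is to verify that, for a natural choice of section $\varphi\colon (B^{+})^2\to B^{+}\otimes_B B^{+}$ of the multiplication, the Hochschild $2$-cocycle $\hat{\varphi}(d_j\ot d_k + d_k\ot d_j)$ indeed coincides with the symmetric bilinear form on $V\ot V$ extended by zero elsewhere on $B^{+}\ot B^{+}$. One fixes an ordering on $J$ and sets $\varphi(x_j x_k)=x_j\ot x_k$ for $j\leq k$; the anticommutativity $x_j x_k = -x_k x_j$ in $\bigwedge V$ then makes the resulting cocycle symmetric on the chosen generators and trivial in higher degrees after an elementary computation. No deeper obstruction is anticipated, since $\bigwedge V$ is Koszul and the whole computation reduces to linear algebra on $V$.
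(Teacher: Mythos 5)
Your proposal is correct and follows the same route the paper intends: identify $R=\mathrm{span}\{x_j\ot x_k+x_k\ot x_j\}$ as a minimal generating module for the defining ideal of $\bigwedge V$ (minimality by the degree argument, since $T(V)^+I+IT(V)^+$ sits in degrees $\ge 3$), so $R\simeq \M(B)$ by Remark \ref{rmk:M(B)I}, and then dualize via the isomorphism $\Hom(\M(B),\kk)\simeq \Hep2(B,\kk)$ of Remark \ref{rmk:hoch-M(B)}. The paper states this as a remark with no further detail, so your added verification of minimality and of the choice of section $\varphi$ only fills in what the authors leave implicit (noting, as you do, that the identification with the symmetric functionals holds up to normalization and coboundary).
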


Let $\gr A=\bigoplus_{i>0} A_i/A_{i-1}$ be the graded Hopf algebra
associated with the coradical filtration. Then $\gr A\simeq B \#\kk^{\dm}$,
where $B = (\gr A)^{\co \pi}$, and $B$ inherits the gradation
of $A$ with $\gr A(n) = B(n)\# A_{0}$.
If $x \in B(1)$, we will denote again $x=x\#1$ if no confusion arises.

Now we introduce three families of quadratic algebras given by
deformations of bosonizations of Nichols algebras. Then we prove that
these families exhaust the possible Hopf algebras with coradical $\kk^{\dm}$.
We follow the notation of \cite{AV}.

\subsubsection{Deformation of $\toba(M_{I})\# \kdm$}
For $I \in \II$, define
$\zeta_{I} = (\zeta_{i,k,p,q})_{(i,k),(p,q)\in I}$ as a family of
elements in $\kk$ such that $\zeta_{i,k,p,q}=0$ if $i\neq p$.
We call $\zeta_{I}$ a \textit{lifting datum} for $I$.

\begin{definition}\label{def:Ailambda}
Let $I\in \II$ and $\zeta_{I}$ a lifting datum for $I$.
We denote by
$\mA(\zeta_{I})$ the
$\Bbbk$-algebra given by
$(T(M_{I})\# \kk^{\dm}) / \mJ_{\zeta_{I}}$, where $\mJ_{\zeta_{I}}$ is the two-sided
ideal generated by the elements
\begin{align*}
y_{r}^{(i,k)}y_{r}^{(p,q)} + y_{r}^{(p,q)}y_{r}^{(i,k)} & ,\qquad \text{ for all }(i,k),(p,q)\in I,\ r=1,2,\\
y_{1}^{(i,k)}y_{2}^{(p,q)} + y_{2}^{(p,q)}y_{1}^{(i,k)} & -
[\zeta_{i,k,p,q}(1 -\theta_{0,q-k}) -\zeta_{i,q,p,k}\theta_{1,k-q}]
,\qquad \text{ for all }(i,k),(p,q)\in I.
\end{align*}
\end{definition}
A direct computation shows that $\mJ_{\zeta_{I}}$ is a Hopf ideal and
thus $\mA(\zeta_{I})$ is a Hopf algebra. The coalgebra structure is determined
for all $(i,k)\in I$ by
\begin{align*}
\com(y^{(i,k)}_{1}) &  =  y^{(i,k)}_{1}\ot 1 + \theta_{0, -k} \ot y^{(i,k)}_{1}
+ \theta_{1,k} \ot y^{(i,k)}_{2},\qquad \eps(y^{(i,k)}_{1}) = 0,\\
\com(y^{(i,k)}_{2}) &  =  y^{(i,k)}_{2}\ot 1 + \theta_{1,-k} \ot y^{(i,k)}_{1}
+ \theta_{0,k} \ot y^{(i,k)}_{2},\qquad \eps(y^{(i,k)}_{2}) = 0.
\end{align*}

\begin{lema}\label{lem:Ailambdadef}
$\mA(\zeta_{I})$ is a lifting of $\toba(M_{I})\# \kk^{\dm}$ and
any lifting of $\toba(M_{I})\# \kk^{\dm}$ is isomorphic to
$\mA(\zeta_{I})$ for some lifting datum $\zeta_{I}$.
In particular, $\mA(\zeta_{I})_{0} = \kdm$ and $\dim \mA(\zeta_{I}) = 4^{|I|} 2m$.
\end{lema}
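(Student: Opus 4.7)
The plan is to apply the cocycle-deformation machinery of Section \ref{sec:cohomology}. Since $\toba(M_{I})=\bigwedge M_{I}$ is a quadratic exterior algebra, the braiding in $\ydkm$ is symmetric, and $\kdm$ is cosemisimple, Theorem \ref{thm:defcocycles} applies: every lifting of $\toba(M_{I})\#\kdm$ is a cocycle deformation by a multiplicative $2$-cocycle $\sigma=e^{\tilde{\eta}}$ for some $\eta\in\Zep2(\toba(M_{I}),\kk)^{\kdm}$. Combined with the surjectivity of $\tilde{\pa}_\ell$ from Theorem \ref{thm:M(B)V}, applied in the only nontrivial degree $\ell=-1$ (where one must check $\yd(\M(\toba(M_{I})),M_{I})_{-1}=0$ directly from the description of $\M(B)$ for an exterior algebra), this reduces the classification of liftings to the computation of $\Hep2(\toba(M_{I}),\kk)^{\kdm}$.

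I would next compute this invariant cohomology explicitly. By Remark \ref{rmk:H2BM}, the group $\Zep2(\toba(M_{I}),\kk)$ is spanned by the symmetric dual functionals to the quadratic relations $y_{r}^{i,k}y_{s}^{p,q}+y_{s}^{p,q}y_{r}^{i,k}$. Using the $\kdm$-invariance criterion of Remark \ref{rmk:kdm-inv} and the coaction formulas from Subsubsection \ref{subsubsec:ydmodmike}, a term-by-term analysis based on the idempotent identities \eqref{eq:theta1}--\eqref{eq:theta3} shows that invariance forces the cocycle to vanish on pairs $(y_{r}^{i,k},y_{r}^{p,q})$ with equal index $r$ and on pairs $(y_{1}^{i,k},y_{2}^{p,q})$ with $i\neq p$. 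The remaining freedom lies on pairs $(y_{1}^{i,k},y_{2}^{i,q})$, and it is indexed exactly by the two scalars $\zeta_{i,k,q}$ and $\zeta_{i,q,k}$ of the lifting datum.

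The third step is to translate each invariant cocycle $\eta$ into explicit deformed relations via Remark \ref{rmk:formula-def}: each quadratic relation $x=\sum u^{i}\ot v^{i}$ is replaced by $x_{\eta}=x-\sum\bigl(\eta(u^{i},v^{i})-u^{i}_{(-1)}v^{i}_{(-1)}\eta(u^{i}_{(0)},v^{i}_{(0)})\bigr)$. A direct calculation with the coaction formulas shows that the relations $y_{r}^{i,k}y_{r}^{p,q}+y_{r}^{p,q}y_{r}^{i,k}$ remain undeformed, while the deformation of $y_{1}^{i,k}y_{2}^{p,q}+y_{2}^{p,q}y_{1}^{i,k}$ yields precisely the element $\delta_{i,p}[\zeta_{i,k,q}(1-\theta_{0,q-k})-\zeta_{i,q,k}\theta_{1,k-q}]$ appearing in Definition \ref{def:Ailambda}. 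Because $\sigma=e^{\tilde{\eta}}$ is automatically a multiplicative $2$-cocycle in the symmetric braided case, this simultaneously shows that $\mJ_{\zeta_{I}}$ is a Hopf ideal making $\mA(\zeta_{I})$ a well-defined Hopf algebra, and that every lifting is isomorphic to some $\mA(\zeta_{I})$. The remaining claims follow since cocycle deformations preserve the underlying coalgebra and dimension: from $\gr\mA(\zeta_{I})\simeq\toba(M_{I})\#\kdm$ we read off $\mA(\zeta_{I})_{0}=\kdm$ and $\dim\mA(\zeta_{I})=4^{|I|}\cdot 2m$.

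The main technical obstacle is the invariance calculation of the second step, which requires careful combinatorial bookkeeping over pairs of indices that may or may not be equal modulo $n$. In particular, the asymmetric combination $\zeta_{i,k,q}(1-\theta_{0,q-k})-\zeta_{i,q,k}\theta_{1,k-q}$ is not an arbitrary choice but is forced by the different coactions on $y_{1}^{i,k}$ and $y_{2}^{i,q}$; verifying this demands a careful tracking of products of the idempotents $\theta_{a,b}$ through the formula of Remark \ref{rmk:formula-def}.
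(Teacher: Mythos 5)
Your proposal is correct and follows essentially the same route as the paper: reduce to exponentials of $\kdm$-invariant Hochschild $2$-cocycles via Theorem \ref{thm:defcocycles}, identify the invariant cocycles as the symmetric functionals $d_{1}^{(i,k)}\ot d_{2}^{(i,q)}+d_{2}^{(i,q)}\ot d_{1}^{(i,k)}$ (the equal-index ones and those with $i\neq p$ being killed by invariance), and push them through Remark \ref{rmk:formula-def} to recover exactly the relations of Definition \ref{def:Ailambda}. The only quibble is that the deformation of the quadratic relations lives in cohomological degree $\ell=-2$ (where the hypothesis $\yd(\M(\toba(M_{I})),M_{I})_{-2}=0$ is automatic, since $\M(\toba(M_{I}))$ is concentrated in degree $2$ and $M_{I}$ in degree $1$), rather than $\ell=-1$ as you state; this does not affect the structure of the argument.
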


\pf
We begin by proving that $\mA(\zeta_{I})$ is a lifting of $\toba(M_{I})\# \kk^{\dm}$. To see this, it is enough to show that $\mA(\zeta_{I})$
is a cocycle deformation
 of $A_{I}=\toba(M_{I})\# \kk^{\dm}$. For this, we apply the results of Section \ref{sec:cohomology}:
To produce the multiplicative cocycle, we look for an element $\mu$ in $\Hbt2(A_{I})_2$, because the relations in $\toba(M_{I})$ are quadratic.
By Theorem \ref{thm:M(B)V}, it is enough to look for $\eta \in \Zep2(\toba(M_{I}),\kk)^{\kdm}$ and take $\mu=\partial^{c}(\tilde{\eta})$.
Since by Remark \ref{rmk:H2BM},
$\Zep2(\toba(M_{I}),\kk)$ is linearly spanned by linear functionals,  by \cite[Corollary 2.6]{GM}
the exponentiation $\sigma = e^{\tilde{\eta}}$ yields a multiplicative cocycle for $A_{I}$. To obtain the deformation performed by the
multiplicative cocycle,
we need to set only the deformation in degree 2,  by Remark \ref{rmk:formula-def-part}. It turns out that the parameters describing
$ \Zep2(\toba(M_{I}),\kk)^{\kdm}$ are exactly those that produce the lifting data for $I$.

Recall that $M_{I}$ is linearly spanned by
the elements $y_{1}^{(i_{j},k_{j})}, y_{2}^{(i_{j},k_{j})}$, with $1\leq j \leq s:=|I|$,
which are homogeneous in
$\yddm$.
Let $\{d_{1}^{(i_{j},k_{j})},d_{2}^{(i_{j},k_{j})}\}_{
1\leq j\leq s}$ denote the dual basis.
By Remark \ref{rmk:H2BM}, we know that
$\Zep2(\toba(M_{I}),\kk)$ is linearly spanned by the elements
\begin{align*}
\eta_{rr}^{ikpq} & =
d_{r}^{(i,k)}\ot d_{r}^{(p,q)} + d_{r}^{(p,q)}\ot d_{r}^{(i,k)} ,
\qquad\text{ for }r=1,2,\ (i,k),(p,q) \in I,\\
\eta_{12}^{ikpq} &=
d_{1}^{(i,k)}\ot d_{2}^{(p,q)} +  d_{2}^{(p,q)}\ot d_{1}^{(i,k)},
\qquad\text{ for all }(i,k), (p,q) \in I.
\end{align*}
Let $\eta$ be a linear combination of $\eta_{rr}^{ikpq}$ and $\eta_{12}^{ikpq}$
with $(i,k), (p,q) \in I$. Note that
$\eta$ is $ \kk^{\dm}$-invariant if and only if the non-zero summands $\eta_{rr}^{ikpq}$ and $\eta_{12}^{ikpq}$
are $ \kk^{\dm}$-invariant.
Write $\eta_{rr}^{ikpq}=\eta_{rr}$.
Then
$\eta_{rr}$ is not $ \kk^{\dm}$-invariant because $$f\cdot \eta_{rr} (y_{r}^{(i,k)}, y_{r}^{(p,q)}) =
f(h^{(-1)^{r}(i+p)})\eta_{rr}(y_{r}^{(i,k)}, y_{r}^{(p,q)}),$$
and
$f(h^{(-1)^{r}(i+p)})\neq f(1)$ for all $f \in \kk^{\dm}$ since $1\leq i,p < n$, see Remark \ref{rmk:kdm-inv}.
On the other hand, $\eta_{12}^{ikpq}$ is $ \kk^{\dm}$-invariant if and only if
$f(h^{(i-p)}) = f(1)$ for all $f \in \kk^{\dm}$, that is, $i=p$.
Let $(i,k), (i,q) \in I$ and write
\begin{equation}\label{eq:eta}
\eta_{12} = \sum_{(i,k),(p,q)\in I} \frac{\zeta_{i,k,i,q}}{2}\eta_{12}^{ikiq}.
\end{equation}
Then, any possible
deformation in degree 2 is given by $\pac(\widetilde{\eta_{12}}) $.
By Remark \ref{rmk:formula-def-part}, we have
\begin{align*}
 (y_{1}^{(i,k)})^{2} & = {\eta_{12}}(y_{1}^{(i,k)},y_{1}^{(i,k)}) -
 \theta_{0,m-k}\theta_{0,m-k} {\eta_{12}}(y_{1}^{(i,k)},y_{1}^{(i,k)}) - \theta_{0,m-k}\theta_{1,k}
 {\eta_{12}}(y_{1}^{(i,k)},y_{2}^{(i,k)})\\
& \quad -\theta_{1,k}\theta_{0,m-k} {\eta_{12}}(y_{2}^{(i,k)},y_{1}^{(i,k)})- \theta_{1,k}
\theta_{1,k}{\eta_{12}}(y_{2}^{(i,k)},y_{2}^{(i,k)})
\overset{\eqref{eq:theta3}}= 0,\\
(y_{2}^{(i,k)})^{2} & = {\eta_{12}}(y_{2}^{(i,k)},y_{2}^{(i,k)}) -
 \theta_{1,m-k}\theta_{1,m-k}
{\eta_{12}}(y_{1}^{(i,k)},y_{1}^{(i,k)}) -
 \theta_{1,m-k}\theta_{0,k}
{\eta_{12}}(y_{1}^{(i,k)},y_{2}^{(i,k)})\\
& \quad -\theta_{0,k}\theta_{1,m-k}
{\eta_{12}}(y_{2}^{(i,k)},y_{1}^{(i,k)})-\theta_{0,k}\theta_{0,k}
{\eta_{12}}(y_{2}^{(i,k)},y_{2}^{(i,k)})
\overset{\eqref{eq:theta3}}= 0,
\end{align*}
since ${\eta_{12}}(y_{1}^{(i,k)},y_{1}^{(i,k)})= 0 =
{\eta_{12}}(y_{2}^{(i,k)},y_{2}^{(i,k)})$
for all $(i,k)\in I$.
Now we compute the deformation of $y_{1}^{(i,k)}y_{2}^{(p,q)}+y_{2}^{(p,q)}y_{1}^{(i,k)} $ for
$(i,k),(p,q)\in I$. For the first term we have
\begin{align*}
 y_{1}^{(i,k)}y_{2}^{(p,q)}  & = {\eta_{12}}(y_{1}^{(i,k)},y_{2}^{(p,q)}) -
(y_{1}^{(i,k)})_{(-1)}(y_{2}^{(p,q)})_{(-1)}
{\eta_{12}}((y_{1}^{(i,k)})_{(0)},(y_{2}^{(p,q)})_{(0)})
 \\
 & = {\eta_{12}}(y_{1}^{(i,k)},y_{2}^{(p,q)}) -
 \theta_{0,m-k}\theta_{1,m-q}
{\eta_{12}}(y_{1}^{(i,k)},y_{1}^{(p,q)})
 -\theta_{0,m-k}\theta_{0,q}
{\eta_{12}}(y_{1}^{(i,k)},y_{2}^{(p,q)}) \\
& \quad
 -\theta_{1,k}\theta_{1,m-q}
{\eta_{12}}(y_{2}^{(i,k)},y_{1}^{(p,q)})- \theta_{1,k}\theta_{0,q}
{\eta_{12}}(y_{2}^{(i,k)},y_{2}^{(p,q)}) \\
& =
\frac{1}{2}(\zeta_{i,k,p,q} - \zeta_{i,k,p,q}\theta_{0,m-k}\theta_{0,q} - \zeta_{i,q,p,k}\theta_{1,k}\theta_{1,m-q})\\
&= \frac{1}{2}(\zeta_{i,k,p,q} - \zeta_{i,k,p,q}\theta_{0,q-k} - \zeta_{i,q,p,k}\theta_{1,k-q}),
 \end{align*}
 since ${\eta_{12}}(y_{1}^{(i,k)},y_{1}^{(p,q)})= 0 =
 {\eta_{12}}(y_{2}^{(i,k)},y_{2}^{(p,q)})$
for all $(i,k), (p,q)\in I$.
For the second term,
\begin{align*}
y_{2}^{(p,q)}y_{1}^{(i,k)} & ={\eta_{12}}(y_{2}^{(p,q)},y_{1}^{(i,k)}) -
(y_{2}^{(p,q)})_{(-1)}(y_{1}^{(i,k)})_{(-1)}
{\eta_{12}}((y_{2}^{(p,q)})_{(0)},(y_{1}^{(i,k)})_{(0)})
 \\
 & = {\eta_{12}}(y_{2}^{(p,q)},y_{1}^{(i,k)}) -
 \theta_{1,m-q}\theta_{0,m-k}
{\eta_{12}}(y_{1}^{(p,q)},y_{1}^{(i,k)})
 -\theta_{1,m-q}\theta_{1,k}
{\eta_{12}}(y_{1}^{(p,q)},y_{2}^{(i,k)}) -\\
& \quad
 -\theta_{0,q}\theta_{0,m-k}
{\eta_{12}}(y_{2}^{(p,q)},y_{1}^{(i,k)}) -\theta_{0,q}\theta_{1,k}
{\eta_{12}}(y_{2}^{(p,q)},y_{2}^{(i,k)})\\
& =\frac{1}{2}( \zeta_{i,k,p,q} -\zeta_{i,q,p,k} \theta_{1,m-q}\theta_{1,k} -\zeta_{i,k,p,q}\theta_{0,q}\theta_{0,m-k})\\
& =\frac{1}{2}( \zeta_{i,k,p,q} - \zeta_{i,q,p,k} \theta_{1,k-q} -\zeta_{i,k,p,q}\theta_{0,q-k}).
 \end{align*}
Consequently, $\pac(\widetilde{\eta_{12}}) $ produces the deformation
$$
 y_{1}^{(i,k)}y_{2}^{(p,q)}+y_{2}^{(p,q)}y_{1}^{(i,k)}  = \zeta_{i,k,p,q}(1 -\theta_{0,q-k}) -\zeta_{i,q,p,k}\theta_{1,k-q},
$$
and $(\zeta_{i,k,p,q})_{(i,k),(p,q)\in I}$ is a lifting datum for $\mA(\zeta_I)$.
Note that any lifting datum for $\mA(\zeta_I)$ defines an element $\eta_{12}$ in $\Zep2(\toba(M_{I}),\kk)^{\kk^{\dm}}$ as in \eqref{eq:eta}.

Conversely, let $A$ be a Hopf algebra such that $\gr A =\toba(M_{I})\# \kk^{\dm} = A_{I}$.
Then by Remark \ref{rmk:lifting=def}, $A$ corresponds to a formal deformation of $A_{I}$. Write $\mu  \in \Zbh2(A_{I})$ for its infinitesimal part.
By Theorem \ref{thm:M(B)V} and the calculations above,
we have that $\mu = \partial^{c}(\tilde{\eta})$ for some $\eta \in \Zep2(\toba(M_{I}),\kk)^{\kdm}=\kk\; \eta_{12}$,
and the exponentiation $\sigma= e^{\tilde{\eta}}$ is a multiplicative cocycle
for $A_{I}$. As $\toba(M_{I})$ is a quadratic algebra, by Remark \ref{rmk:formula-def-part} the deformation corresponding
to $\eta$ coincides with the deformation given by $\sigma$. Hence, $A \simeq \mA(\zeta_I)$ for the lifting datum
associated with $\eta$.
 \epf

\begin{cor}\label{cor:mikesnotdef}
Let $A$ be a finite-dimensional Hopf algebra with $A_{0}\simeq \kk^{\dm}$ such that
its infinitesimal braiding is isomorphic to
$M_{I}$ with $I= (i,k)$ or $|I|\geq 2$ such that
$i \neq p$ for all $(i,k), (p,q) \in I$.
Then $A \simeq \toba (M_{I})\# \Bbbk^{\dm}$.
\end{cor}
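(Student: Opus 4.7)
The plan is to apply Lemma \ref{lem:Ailambdadef} and then show that, under the stated hypotheses on $I$, every lifting datum $\zeta_{I}$ produces a deformation that vanishes identically, forcing $\mA(\zeta_{I})$ to be isomorphic to the undeformed bosonization $\toba(M_{I})\# \kdm$.

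First, since $A$ is a finite-dimensional Hopf algebra with coradical $\kdm$ and infinitesimal braiding $M_{I}$, by Theorem \ref{thm:defcocycles} and Lemma \ref{lem:Ailambdadef} we may assume $A\simeq \mA(\zeta_{I})$ for some lifting datum $\zeta_{I}=(\zeta_{i,k,q})$. By the definition of a lifting datum, $\zeta_{i,k,q}=0$ whenever the first indices of the two pairs in $I$ producing this scalar are distinct; equivalently, only the relations
\begin{align*}
y_{1}^{(i,k)}y_{2}^{(p,q)}+y_{2}^{(p,q)}y_{1}^{(i,k)}=\delta_{i,p}\bigl[\zeta_{i,k,q}(1-\theta_{0,q-k})-\zeta_{i,q,k}\theta_{1,k-q}\bigr]
\end{align*}
with $i=p$ can possibly give a nonzero deformation.

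Next, I split into the two cases of the hypothesis. If $|I|\geq 2$ and the first components of the pairs in $I$ are pairwise distinct, then for any two distinct elements $(i,k),(p,q)\in I$ we have $i\neq p$, so the factor $\delta_{i,p}$ kills the above term; the only remaining relations are the diagonal ones corresponding to $(i,k)=(p,q)$, i.e.\ the case $|I|=1$ treated below. If $|I|=1$, writing $I=\{(i,k)\}$, the only relevant deformation term is
\[
\zeta_{i,k,k}(1-\theta_{0,0})-\zeta_{i,k,k}\theta_{1,0}=\zeta_{i,k,k}\bigl(1-\theta_{0,0}-\theta_{1,0}\bigr),
\]
and by \eqref{eq:theta1} we have $\theta_{0,0}+\theta_{1,0}=1$, so this expression vanishes.

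Consequently, in both situations the ideal $\mJ_{\zeta_{I}}$ of Definition \ref{def:Ailambda} coincides with the ideal defining $\toba(M_{I})\#\kdm$, and therefore $A\simeq \mA(\zeta_{I})\simeq \toba(M_{I})\#\kdm$. There is no real obstacle here beyond the bookkeeping check that the identities \eqref{eq:theta1}--\eqref{eq:theta3} make the diagonal deformation term collapse; the rest is an immediate consequence of Lemma \ref{lem:Ailambdadef}.
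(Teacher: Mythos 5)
Your proposal is correct and follows essentially the same route as the paper: invoke Lemma \ref{lem:Ailambdadef} to write $A\simeq\mA(\zeta_I)$ and then observe that under the hypotheses the deformed relation collapses to the undeformed one. You simply make explicit the two ingredients the paper compresses into one line — the factor $\delta_{i,p}$ killing the off-diagonal terms when the first indices differ, and the identity $\theta_{0,0}+\theta_{1,0}=1$ from \eqref{eq:theta1} killing the diagonal terms — so no further comment is needed.
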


\pf
By assumption, $\gr A \simeq \toba (M_{I})\#  \kk^{\dm}$
and $A$ is a lifting of $\toba (M_{I})\#  \kk^{\dm}$.
Then, by Lemma \ref{lem:Ailambdadef}, we know that
$A \simeq (T(M_{I})\#  \kk^{\dm}) / \mJ_{\zeta_{I}}$, where $\mJ_{\zeta_{I}}$ is the ideal generated by
\begin{align*}
y_{r}^{(i,k)}y_{r}^{(p,q)} + y_{r}^{(p,q)}y_{r}^{(i,k)} & ,\qquad \text{ for all }(i,k),(p,q)\in I,\ r=1,2,\\
y_{1}^{(i,k)}y_{2}^{(p,q)} + y_{2}^{(p,q)}y_{1}^{(i,k)} & -
[\zeta_{i,k,p,q}(1 -\theta_{0,q-k}) -\zeta_{i,q,p,k}\theta_{1,k-q}]
,\qquad \text{ for all }(i,k),(p,q)\in I.
\end{align*}
If $I=(i,k)$ or $i\neq p$ for all $(i,k), (p,q) \in I$, by \eqref{eq:theta1}
the second relation reduces to
$ y_{1}^{(i,k)}y_{2}^{(p,q)} + y_{2}^{(p,q)}y_{1}^{(i,k)}$.
Thus, $A \simeq \toba(M_{I})\# \kk^{\dm} $ and the corollary is proved.
\epf

\subsubsection{Deformations of $\toba(M_{L})\# \kdm$}
For $L \in \Lc$, define
$\mu_{L} = (\mu_{\ell,t})_{\ell\leq t\in L}$, $\nu_{L} = (\nu_{\ell,t})_{\ell\leq t\in L}$,
$\tau_{L} = (\tau_{\ell,t})_{\ell, t\in L}$ as families of
elements in $\kk$.
We call $(\mu_{L},\nu_{L},\tau_{L})$ a \textit{lifting data} for $L$.

\begin{definition}\label{def:BLmugamma} Let $L \in \Lc$.
Given a lifting data $(\mu_{L},\nu_{L},\tau_{L})$ for $L$
we denote by
$\mB(\mu_{L},\nu_{L},\tau_{L})$ the
algebra $(T(M_{L})\# \kk^{\dm}) / \mJ_{\mu_{L},\nu_{L},\tau_{L}}$
where $\mJ_{\mu_{L},\nu_{L},\tau_{L}}$ is the two-sided ideal generated by
\begin{align*}
x_{1}^{(\ell)}x_{1}^{(t)} + x_{1}^{(t)}x_{1}^{(\ell)}
- \mu_{\ell,t} (1 -
 \theta_{0,-\ell-t}) - \nu_{\ell,t}\theta_{1,\ell+t}
& ,\qquad \text{ for all }\ell, t \in L,\\
x_{2}^{(\ell)}x_{2}^{(t)} + x_{2}^{(t)}x_{2}^{(\ell)}
- \nu_{\ell,t} (1 -
 \theta_{0,\ell+t}) - \mu_{\ell,t}\theta_{1,-\ell-t}
 & ,\qquad \text{ for all }\ell, t \in L,\\
x_{1}^{(\ell)}x_{2}^{(t)} + x_{2}^{(t)}x_{1}^{(\ell)}  -
\tau_{\ell,t}( 1 -\theta_{0,t-\ell}) - \tau_{t,\ell} \theta_{1,\ell-t}
&,\qquad \text{ for all }\ell,t\in L.
\end{align*}
\end{definition}
Here $\pm\ell \pm t$ means $\pm\ell \pm t$  $\mod m$. Recall that $\ell, t <n$ and $2n =m$.
\medbreak

A direct computation shows that $\mJ_{\mu_{L},\nu_{L},\tau_{L}}$ is a Hopf ideal and
thus $\mB(\mu_{L},\nu_{L},\tau_{L})$ is a Hopf algebra. The coalgebra structure is determined
for all $\ell \in L$ by
\begin{align*}
\com(x^{(\ell)}_{1}) &  =  x^{(\ell)}_{1}\ot 1 + \theta_{0,-\ell} \ot x^{(\ell)}_{1}
+ \theta_{1,\ell} \ot x^{(\ell)}_{2}, \qquad
\eps(x^{(\ell)}_{1}) = 0,\\
\com(x^{(\ell)}_{2}) &  =  x^{(\ell)}_{1}\ot 1 + \theta_{1,-\ell} \ot x^{(\ell)}_{1}
+ \theta_{0,\ell} \ot x^{(\ell)}_{2},\qquad
\eps(x^{(\ell)}_{1}) = 0.
\end{align*}

\begin{lema}\label{lem:BLmugammadef}
$\mB(\mu_{L},\nu_{L},\tau_{L})$ is a lifting of $\toba(M_{L})\# \kk^{\dm}$ and
any lifting of $\toba(M_{L})\# \kk^{\dm}$ is isomorphic to $\mB(\mu_{L},\nu_{L},\tau_{L})$
 for some lifting datum $(\mu_{L},\nu_{L},\tau_{L})$.
In particular, $\mB(\mu_{L},\nu_{L},\tau_{L})_{0} = \kdm$ and $\dim \mB(\mu_{L},\nu_{L},\tau_{L}) = 4^{|L|} 2m$.
\end{lema}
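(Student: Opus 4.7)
The plan is to mimic the proof of Lemma~\ref{lem:Ailambdadef}: exhibit $\mB(\mu_L,\nu_L,\tau_L)$ as a multiplicative $2$-cocycle deformation $(\toba(M_L)\#\kdm)_\sigma$ with $\sigma=e^{\pac(\tilde\eta)}$ for a $\kdm$-invariant Hochschild $2$-cocycle $\eta\in \Zep2(\toba(M_L),\kk)^{\kdm}$, and then invoke Theorem~\ref{thm:defcocycles} for the converse.

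By Remark~\ref{rmk:H2BM}, the space $\Zep2(\toba(M_L),\kk)$ is linearly spanned by the symmetric functionals $\eta_{rr}^{\ell t}=d_r^{(\ell)}\ot d_r^{(t)}+d_r^{(t)}\ot d_r^{(\ell)}$ (with $r=1,2$) and $\eta_{12}^{\ell t}=d_1^{(\ell)}\ot d_2^{(t)}+d_2^{(t)}\ot d_1^{(\ell)}$, for $\ell,t\in L$. In sharp contrast with the $M_I$ case, \emph{every} such cocycle is automatically $\kdm$-invariant: by \eqref{eq:dualmel} the $\kdm$-action on $M_L$ factors through the algebra map $\kdm\to\kk$, $\varphi_{ij}\mapsto\delta_{i,0}\delta_{j,n}$, and the comultiplication formula $\com(\varphi_{ij})=\sum_{k,\ell'}\varphi_{i+k,(-1)^k(j-\ell')}\ot\varphi_{k,\ell'}$ ensures that only the summand with $k=0$, $\ell'=n$ can act non-trivially on $M_L\ot M_L$, which in turn forces $i=0$ and $j\equiv 2n\equiv 0\pmod m$. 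Hence $(\varphi_{ij}\cdot\eta)(x,y)=\delta_{i,0}\delta_{j,0}\,\eta(x,y)=\varphi_{ij}(1)\,\eta(x,y)$ for all $x,y\in M_L$.

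Writing the generic invariant cocycle as $\eta=\tfrac{1}{2}\sum_{\ell,t}(\mu_{\ell,t}\eta_{11}^{\ell t}+\nu_{\ell,t}\eta_{22}^{\ell t})+\tfrac{1}{2}\sum_{\ell,t}\tau_{\ell,t}\eta_{12}^{\ell t}$, I would compute $\tilde{\pac}(\eta)$ on each pair $(x_r^{(\ell)},x_s^{(t)})$ using the formula of Remark~\ref{rmk:formula-def}. Inserting the coactions from \eqref{eq:dualmel} and applying the orthogonality identities \eqref{eq:theta3} collapses the four-term coaction sum into two surviving products, one from $\theta_{0,\cdot}\theta_{0,\cdot}$ and one from $\theta_{1,\cdot}\theta_{1,\cdot}$. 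The structurally important point is that $\eta_{11}^{\ell t}$ contributes both to the $(x_1,x_1)$ relation (directly) and to the $(x_2,x_2)$ relation via the off-diagonal piece $\theta_{1,\ell}\ot x_2^{(\ell)}$ of $\lambda(x_1^{(\ell)})$; symmetrically, $\eta_{22}^{\ell t}$ contributes to both $(x_2,x_2)$ and $(x_1,x_1)$, while $\eta_{12}^{\ell t}$ contributes only to the mixed $(x_1,x_2)$ relation. Matching the resulting degree-$2$ expressions against the ansatz of Definition~\ref{def:BLmugamma} (after absorbing a universal factor of $2$ into the normalization of the lifting datum) recovers the three stated families of relations verbatim.

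Conversely, Theorem~\ref{thm:defcocycles} ensures that any lifting $A$ of $\toba(M_L)\#\kdm$ has the form $(\toba(M_L)\#\kdm)_\sigma$ with $\sigma=e^{\pac(\tilde f)}$ for some $f\in \Zep2(\toba(M_L),\kk)^{\kdm}$; reading off the coordinates of $f$ in the basis above supplies a lifting datum $(\mu_L,\nu_L,\tau_L)$ for which $A\simeq\mB(\mu_L,\nu_L,\tau_L)$. Since cocycle deformation preserves the underlying coalgebra, one obtains $\mB(\mu_L,\nu_L,\tau_L)_0=\kdm$ and $\dim\mB(\mu_L,\nu_L,\tau_L)=\dim\toba(M_L)\cdot\dim\kdm=4^{|L|}\cdot 2m$. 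The main obstacle I expect is the combinatorial book-keeping in computing $\tilde{\pac}(\eta)$: one must carefully track the asymmetry between $\tau_{\ell,t}$ and $\tau_{t,\ell}$, the sign conventions, and the $\mu$--$\nu$ mixing induced by the off-diagonal coaction terms, in order to reproduce the relations of Definition~\ref{def:BLmugamma} precisely rather than up to linear change of variables.
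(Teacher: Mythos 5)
Your proposal is correct and follows essentially the same route as the paper: identify $\Zep2(\toba(M_L),\kk)$ via Remark \ref{rmk:H2BM}, observe that every such cocycle is automatically $\kdm$-invariant (the paper merely asserts this, whereas you correctly justify it by noting that $\varphi_{ij}$ acts on $M_L\ot M_L$ by $\delta_{i,0}\delta_{j,0}=\eps(\varphi_{ij})$ since both tensor factors have coaction supported on $h^n$ and $h^{2n}=1$), compute $\tilde{\pac}(\eta)$ for the generic invariant cocycle with the $\tfrac12$-normalization to recover the relations of Definition \ref{def:BLmugamma}, and deduce the converse from Theorem \ref{thm:defcocycles}. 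The structural points you flag (the $\mu$--$\nu$ mixing through the off-diagonal coaction terms and the $\tau_{\ell,t}$ versus $\tau_{t,\ell}$ asymmetry) are exactly what the paper's explicit computation exhibits.
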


\pf We proceed as in the proof of Lemma \ref{lem:Ailambdadef}. That is,
we prove
that $\mB(\mu_{L},\nu_{L},\tau_{L})$ is a lifting of $\toba(M_{L})\# \kk^{\dm}$,
by showing that it is a cocycle deformation. Similarly as before, the multiplicative $2$-cocycle is given by
$\sigma= e^{\tilde{\eta}}$ where
 $\eta \in \Zep2(\toba(M_{L}),\kk)^{\kk^{\dm}}$. Since the relations in $\toba(M_{L})$ are quadratic, we need to describe
 only the deformation in degree 2, by Remark \ref{rmk:formula-def-part}.

We know that $M_{L}$ is linearly spanned by
the elements $x_{1}^{(\ell)}, x_{2}^{(\ell)}$ with $\ell \in L$
which are homogeneous in
$\yddm$ of degree $h^{n}$.
Let $\{d_{1}^{(\ell)},d_{2}^{(\ell)}\}_{\ell \in L}$ denote the dual basis.
By Remark \ref{rmk:H2BM}, we know that
$\Zep2(\toba(M_{L}),\kk)$ is linearly spanned by the elements
\begin{align*}
\eta_{rr}^{(\ell),(t)} & =
d_{r}^{(\ell)}\ot d_{r}^{(t)} + d_{r}^{(t)}\ot d_{r}^{(\ell)} ,
\qquad\text{ for }r=1,2,\ \ell,t \in L\\
\eta_{12}^{(\ell,t)} &=
d_{1}^{(\ell)}\ot d_{2}^{(t)} +  d_{2}^{(t)}\ot d_{1}^{(\ell)},
\qquad\text{ for all }\ell,t \in L.
\end{align*}
It follows that $\eta_{rr}^{(\ell),(t)}$ and $\eta_{12}^{(\ell,t)}$
are $\kdm$-invariant for all $r=1,2$ and $\ell,t \in L$; in particular,
$\Zep2(\toba(M_{L}),\kk)^{\kdm}= \Zep2(\toba(M_{L}),\kk)$.
Define
$$ \eta = \frac{1}{2}\left(\sum_{\ell \leq t \in L}
\left(\mu_{\ell,t} \eta_{11}^{(\ell),(t)} +
\nu_{\ell,t} \eta_{22}^{(\ell),(t)}\right) +  \sum_{\ell,  t \in L}
\tau_{\ell,t}\eta_{12}^{(\ell,t)}\right).
$$
Then $\eta$ represents a generic element in $\Zep2(\toba(M_{L}),\kk)^{\kdm}$
and all the possible
$2$-deformations are given by $\pac(\widetilde{\eta}) $.
Following Remark \ref{rmk:formula-def-part}, for $\ell \leq t \in L$ we have
\begin{align*}
x_{1}^{(\ell)}x_{1}^{(t)} & = {\eta}(x_{1}^{(\ell)},x_{1}^{(t)}) -
 \theta_{0,-\ell}\theta_{0,-t}
{\eta}(x_{1}^{(\ell)},x_{1}^{(t)}) -\theta_{0,-\ell}\theta_{1,t}
{\eta}(x_{1}^{(\ell)},x_{2}^{(t)}) -\\
& \quad
 -\theta_{1,\ell}\theta_{0,-t}
 {\eta}(x_{2}^{(\ell)},x_{1}^{(t)})
  -\theta_{1,\ell}\theta_{1,t}
 {\eta}(x_{2}^{(\ell)},x_{2}^{(t)})\\
& = \frac{1}{2}(\mu_{\ell,t} -
 \theta_{0,-\ell}\theta_{0,-t}
\mu_{\ell,t} - \theta_{0,-\ell}\theta_{1,t}\tau_{\ell,t}
-\theta_{1,\ell}\theta_{0,-t}\tau_{t,\ell}
 -\theta_{1,\ell}\theta_{1,t}\nu_{\ell,t})\\
 & = \frac{1}{2}[\mu_{\ell,t} (1 -
 \theta_{0,-\ell-t}) - \theta_{1,\ell+t}\nu_{\ell,t}].
\end{align*}
Analogously,
\begin{align*}
x_{2}^{(\ell)}x_{2}^{(t)} & = {\eta}(x_{2}^{(\ell)},x_{2}^{(t)}) -
 \theta_{1,-\ell}\theta_{1,-t}
{\eta}(x_{1}^{(\ell)},x_{1}^{(t)}) -\theta_{1,-\ell}\theta_{0,t}
{\eta}(x_{1}^{(\ell)},x_{2}^{(t)}) -\\
& \quad
 -\theta_{0,\ell}\theta_{1,-t}
 {\eta}(x_{2}^{(\ell)},x_{1}^{(t)})
  -\theta_{0,\ell}\theta_{0,t}
 {\eta}(x_{2}^{(\ell)},x_{2}^{(t)})\\
& = \frac{1}{2}(\nu_{\ell,t} -
\mu_{\ell,t} \theta_{1,-\ell-t}
 - \nu_{\ell,t}\theta_{0,\ell+t})
 = \frac{1}{2}[\nu_{\ell,t} (1 -
 \theta_{0,\ell+t}) - \mu_{\ell,t}\theta_{1,-\ell-t}].
\end{align*}
Finally,
for the relation involving $x_{1}^{(\ell)}$ and $x_{2}^{(t)}$ we have that
\begin{align*}
 x_{1}^{(\ell)}x_{2}^{(t)}+x_{2}^{(t)}x_{1}^{(\ell)}  & =
{\eta}(x_{1}^{(\ell)},x_{2}^{(t)}) -
(x_{1}^{(\ell)})_{(-1)}(x_{2}^{(t)})_{(-1)}
{\eta}((x_{1}^{(\ell)})_{(0)},(x_{2}^{(t)})_{(0)}) +\\
& \quad + {\eta}(x_{2}^{(t)},x_{1}^{(\ell)}) -
(x_{2}^{(t)})_{(-1)}(x_{1}^{(\ell)})_{(-1)}
{\eta}((x_{2}^{(t)})_{(0)},(x_{1}^{(\ell)})_{(0)}).
\end{align*}
For the first term on the right hand side we have
\begin{align*}
 & {\eta}(x_{1}^{(\ell)},x_{2}^{(t)}) -
(x_{1}^{(\ell)})_{(-1)}(x_{2}^{(t)})_{(-1)}
{\eta}((x_{1}^{(\ell)})_{(0)},(x_{2}^{(t)})_{(0)})
 = \\
 & = {\eta}(x_{1}^{(\ell)},x_{2}^{(t)}) -
 \theta_{0,-\ell}\theta_{1,-t}
{\eta_{12}}(x_{1}^{(\ell)},x_{1}^{(t)})
 -\theta_{0,-\ell}\theta_{0,t}
{\eta}(x_{1}^{(\ell)},x_{2}^{(t)}) \\
& \quad
 -\theta_{1,\ell}\theta_{1,-t}
{\eta}(x_{2}^{(\ell)},x_{1}^{(t)})- \theta_{1,\ell}\theta_{0,t}
{\eta}(x_{2}^{(\ell)},x_{2}^{(t)}) \\
& =  \frac{1}{2}(\tau_{\ell,t} - \tau_{\ell,t}\theta_{0,-\ell+t} -
\tau_{t,\ell}\theta_{1,\ell-t})
=  \frac{1}{2}[\tau_{\ell,t}(1 - \theta_{0,-\ell+t}) -
\tau_{t,\ell}\theta_{1,\ell-t}],
 \end{align*}
and a similar computation for the second term yields
\begin{align*}
 & {\eta}(x_{2}^{(t)},x_{1}^{(\ell)}) -
(x_{2}^{(t)})_{(-1)}(x_{1}^{(\ell)})_{(-1)}
{\eta}((x_{2}^{(t)})_{(0)},(x_{1}^{(\ell)})_{(0)})
 = \frac{1}{2}[\tau_{\ell,t}( 1 -\theta_{0,t-\ell}) - \tau_{t,\ell} \theta_{1,\ell-t}].
 \end{align*}
Hence, the deformation of
$\toba(M_{L})\#\kdm$ performed by $\pac(\tilde{\eta})$ is isomorphic to
$\mB(\mu_{L}, \nu_{L}, \tau_{L})$ for some lifting data
$(\mu_{L}, \nu_{L}, \tau_{L})$ associated with the parameters of $\eta$.

Conversely, let $A$ be a Hopf algebra such that $\gr A =\toba(M_{L})\# \kk^{\dm}=A_{L}$.
Then by Remark \ref{rmk:lifting=def}, $A$ corresponds to a formal deformation of $A_{L}$. Write $\mu  \in \Zbh2(A_{L})$ for its infinitesimal part.
By Theorem \ref{thm:M(B)V} and the calculations above,
we have that $\mu = \partial^{c}(\tilde{\eta})$ for some $\eta \in \Zep2(\toba(M_{L}),\kk)^{\kdm}$,
and the exponentiation $\sigma= e^{\tilde{\eta}}$ is a multiplicative cocycle
for $A_{L}$. By Remark \ref{rmk:formula-def-part}, the deformation corresponding
to $\eta$ coincides with the deformation given by $\sigma$. Hence, $A\simeq \mB(\mu_{L}, \nu_{L}, \tau_{L})$ for some lifting datum
$(\mu_{L}, \nu_{L}, \tau_{L})$.
\epf

\subsubsection{Deformations of $\toba(M_{I,L})\# \kdm$}
We introduce the last families of deformed algebras corresponding to
the finite-dimensional Nichols algebras associated with mixed classes.

\begin{definition}\label{def:CIL} Let $(I,L) \in \K$.
Given lifting data $\zeta_{I}$ and $(\mu_{L},\nu_{L},\tau_{L})$ for $I$ and $L$,
respectively,
we denote by
$\mC(\zeta_{I},\mu_{L},\nu_{L},\tau_{L})$ the
algebra $(T(M_{I,L})\# \kk^{\dm}) / \mJ_{\zeta_{I},\mu_{L},\nu_{L},\tau_{L}}$
where $\mJ_{\zeta_{I}, \mu_{L},\nu_{L},\tau_{L}}$ is the two-sided ideal generated by
the elements
\begin{align*}
y_{r}^{(i,k)}y_{r}^{(p,q)} + y_{r}^{(p,q)}y_{r}^{(i,k)} & ,\qquad \forall\ (i,k),(p,q)\in I,\ r=1,2,\\
y_{1}^{(i,k)}y_{2}^{(p,q)} + y_{2}^{(p,q)}y_{1}^{(i,k)}  -
[\zeta_{i,k,p,q}(1 -\theta_{0,q-k}) -\zeta_{i,q,p,k}\theta_{1,k-q}]
&,\qquad \forall\ (i,k),(p,q)\in I,\\
x_{1}^{(\ell)}x_{1}^{(t)} + x_{1}^{(t)}x_{1}^{(\ell)}
- \mu_{\ell,t} (1 -
 \theta_{0,-\ell-t}) - \nu_{\ell,t}\theta_{1,\ell+t}
& ,\qquad \forall\ \ell, t \in L,\\
x_{2}^{(\ell)}x_{2}^{(t)} + x_{2}^{(t)}x_{2}^{(\ell)}
- \nu_{\ell,t} (1 -
 \theta_{0,\ell+t}) - \mu_{\ell,t}\theta_{1,-\ell-t}
 & ,\qquad \forall\ \ell, t \in L,\\
x_{1}^{(\ell)}x_{2}^{(t)} + x_{2}^{(t)}x_{1}^{(\ell)}  -
\tau_{\ell,t}( 1 -\theta_{0,t-\ell}) - \tau_{t,\ell} \theta_{1,\ell-t}
&,\qquad \forall\ \ell,t\in L.\\
y_{r}^{(i,k)}x_{s}^{(\ell)} + x_{s}^{(\ell)}y_{r}^{(i,k)} & ,\qquad \forall\ (i,k)\in I,\ \ell \in L,\ r,s=1,2.
\end{align*}
\end{definition}
A direct computation shows that $\mJ_{\zeta_{I},\mu_{L},\nu_{L},\tau_{L}}$ is a Hopf ideal and
thus $\mC(\zeta_{I},\mu_{L},\nu_{L},\tau_{L})$ is a Hopf algebra.
As before, for $(I,L)\in \K$, we call $(\zeta_{I},\mu_{L},\nu_{L},\tau_{L})$ a \textit{lifting
data} for $(I,L)$.

Following the same lines of the proofs of Lemmata \ref{lem:Ailambdadef} and
\ref{lem:BLmugammadef} we have

\begin{lema}\label{lem:CIL}
$\mC(\zeta_{I},\mu_{L},\nu_{L},\tau_{L})$ is a lifting of $\toba(M_{I,L})\# \kk^{\dm}$ and
any lifting of $\toba(M_{I,L})\# \kk^{\dm}$ is isomorphic to $\mC(\zeta_{I},\mu_{L},\nu_{L},\tau_{L})$
for some lifting datum $(\zeta_{I},\mu_{L},\nu_{L},\tau_{L})$.
In particular, we have $\mC(\zeta_{I},\mu_{L},\nu_{L},\tau_{L})_{0} = \kdm$
and $\dim \mC(\zeta_{I},\mu_{L},\nu_{L},\tau_{L}) = 4^{|I|+|L|} \;2m$.
\end{lema}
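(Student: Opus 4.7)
The plan is to adapt the cocycle-deformation argument used in Lemmata \ref{lem:Ailambdadef} and \ref{lem:BLmugammadef} to the reducible Yetter--Drinfeld module $M_{I,L}=M_I\oplus M_L$. Since $\toba(M_{I,L})\simeq\bigwedge M_{I,L}$ is a quadratic exterior algebra, Theorem \ref{thm:defcocycles} says that any lifting of $\toba(M_{I,L})\#\kdm$ is a $2$-cocycle deformation with cocycle $\sigma=e^{\pac(\tilde{\eta})}$ for some $\eta\in\Zep2(\toba(M_{I,L}),\kk)^{\kdm}$, and by Remark \ref{rmk:formula-def} only the degree-$2$ piece of $\eta$ is needed to describe the deformed relations.

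By Remark \ref{rmk:H2BM}, $\Zep2(\toba(M_{I,L}),\kk)$ is linearly spanned by symmetric functionals on pairs of dual basis elements, and these pairs split into three families: (a) pairs of $y$'s, both in $M_I$; (b) pairs of $x$'s, both in $M_L$; and (c) mixed pairs with one factor in $M_I$ and one in $M_L$. The $\kdm$-invariant cocycles in family (a) give exactly the $\eta_{12}^{ikiq}$ parameters $\zeta_{i,k,q}$ studied in Lemma \ref{lem:Ailambdadef}, and those in family (b) give the $\mu_{\ell,t}$, $\nu_{\ell,t}$, $\tau_{\ell,t}$ parameters from Lemma \ref{lem:BLmugammadef}. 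The main obstacle I foresee is ruling out family (c), thereby ensuring that the mixed relations $y_r^{i,k}x_s^{(\ell)}+x_s^{(\ell)}y_r^{i,k}$ remain undeformed and that the presentation $\mJ_{\zeta_I,\mu_L,\nu_L,\tau_L}$ of Definition \ref{def:CIL} is complete.

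To handle family (c), I would apply Remark \ref{rmk:kdm-inv} to a candidate mixed symmetric functional $\eta_{rs}(u,v)=d_r^{(i,k)}(u)d_s^{(\ell)}(v)+d_s^{(\ell)}(u)d_r^{(i,k)}(v)$. Expanding the coactions on $y_r^{(i,k)}$ and $x_s^{(\ell)}$, the relations \eqref{eq:theta3} kill cross terms of the form $\theta_{0,\ast}\theta_{1,\ast}$, so only the pairs $(r,s)\in\{(1,1),(2,2)\}$ can contribute. The invariance condition then reduces to requiring $\varphi(\theta_{a,\ell+k})=\varphi(1)$ for all $\varphi\in\kdm$ and some fixed $a\in\{0,1\}$; but $1=\theta_{0,0}+\theta_{1,0}$ has non-trivial components in both idempotent blocks, while $\theta_{a,\ell+k}$ lies in only one of them, so this identity fails for every choice of $(i,k,\ell)$. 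Hence all mixed invariant cocycles vanish.

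With this splitting, the proof finishes in direct analogy with the earlier lemmas: the deformation $\pac(\tilde{\eta})$ computed in Lemma \ref{lem:Ailambdadef} produces the $y$-$y$ relations in Definition \ref{def:CIL}, the computation from Lemma \ref{lem:BLmugammadef} produces the $x$-$x$ relations, and the vanishing of mixed invariants leaves the $y$-$x$ relations untouched. This shows that $\mC(\zeta_I,\mu_L,\nu_L,\tau_L)$ is a cocycle deformation of $\toba(M_{I,L})\#\kdm$, hence a lifting. For the converse, Theorem \ref{thm:defcocycles} realizes every lifting as $e^{\pac(\tilde{\eta})}$ for some $\kdm$-invariant $\eta$, and the classification of such $\eta$ above forces the lifting to match $\mC(\zeta_I,\mu_L,\nu_L,\tau_L)$ for a uniquely determined lifting datum. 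The dimension and coradical claims are then immediate from $\dim\toba(M_{I,L})=4^{|I|+|L|}$ (Theorem \ref{thm:all-nichols-dm}) and the fact that cocycle deformation preserves the filtered Hopf algebra structure with associated graded $\toba(M_{I,L})\#\kdm$.
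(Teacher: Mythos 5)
Your overall architecture coincides with the paper's: split the symmetric generators of $\Zep2(\toba(M_{I,L}),\kk)$ into the $M_I$-block, the $M_L$-block, and the mixed block; reuse Lemmata \ref{lem:Ailambdadef} and \ref{lem:BLmugammadef} for the first two; and show the mixed invariants vanish so that the relations $y_r^{i,k}x_s^{(\ell)}+x_s^{(\ell)}y_r^{i,k}$ stay undeformed. The problem is that your verification of the one genuinely new step --- the non-invariance of the mixed cocycles --- is not correct as written. The $\kdm$-invariance criterion (Remark \ref{rmk:kdm-inv}) is governed by the \emph{action} of $\kdm$ on $B$, i.e.\ by the $\Bbbk\dm$-degrees of the basis vectors: $y_r^{(i,k)}$ has degree $h^{\pm i}$ and $x_s^{(\ell)}$ has degree $h^{n}$, so invariance of a functional supported on a mixed pair forces $f(h^{n\pm i})=f(1)$ for every $f\in\kdm$, i.e.\ $h^{n\pm i}=1$, which fails because $0<n-i$ and $n+i<m$. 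Your condition ``$\varphi(\theta_{a,\ell+k})=\varphi(1)$'' instead involves the labels $k$ and $\ell$ of the representations, which control the $\kdm$-\emph{coaction} (the $\theta$'s), not the action; moreover $\varphi$ and $\theta_{a,\ell+k}$ both live in $\kdm$, so the evaluation does not typecheck. That this is not merely a notational slip can be seen by applying your criterion to the $M_L$-block: there one would also get ``$\theta_{a,\ell+t}\ne 1$'' and wrongly conclude that $\eta_{rr}^{(\ell),(t)}$ and $\eta_{12}^{(\ell,t)}$ are not invariant, contradicting Lemma \ref{lem:BLmugammadef} (they are invariant precisely because $h^{n}\cdot h^{n}=1$). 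Likewise, the appeal to \eqref{eq:theta3} to discard the pairs $(r,s)\notin\{(1,1),(2,2)\}$ addresses the deformation formula of Remark \ref{rmk:formula-def}, not invariance; the cocycles $\eta_{12}^{ik\ell}$ and $\eta_{21}^{ik\ell}$ must be ruled out by the same group-degree computation ($h^{\mp i}h^{n}\ne 1$), which is what the paper does.

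Once the non-invariance of all three mixed families is established correctly, the rest of your argument (assembling the relations of Definition \ref{def:CIL} from the two earlier lemmata, the converse via Theorem \ref{thm:defcocycles}, and the dimension count) is fine and matches the paper.
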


\pf We begin by looking for the $\kdm$-invariant cocycles in $\Zep2(\toba(M_{I,L}),\kk)$.
Since $M_{I,L} =\left(\bigoplus_{1\leq j\leq s} M_{(i_{j},k_{j})}\right)\oplus
\left(\bigoplus_{1\leq t\leq s} M_{\ell_{t}}\right)$, it is
linearly spanned by the elements
$y_{1}^{(i_{j},k_{j})}, y_{2}^{(i_{j},k_{j})}$, with $1\leq j \leq s=|I|$, and
$x_{1}^{(\ell)}, x_{2}^{(\ell)}$ with $\ell \in L$.
Let $\left\{d_{1}^{(i_{j},k_{j})},d_{2}^{(i_{j},k_{j})}\right\}_{1\leq j\leq s}
\bigcup \left\{d_{1}^{(\ell)},d_{2}^{(\ell)}\right\}_{\ell \in L}$ denote the dual basis.

By the proof of the Lemmata \ref{lem:Ailambdadef} and \ref{lem:BLmugammadef}, we know that
$\eta_{12}^{ikiq}$, $\eta_{11}^{(\ell),(t)}$, $\eta_{22}^{(\ell),(t)}$ and
$\eta_{12}^{(\ell),(t)}$ belong to $\Zep2(\toba(M_{I,L}),\kk)^{\kdm}$.
On the other hand, the elements in $\Zep2(\toba(M_{I,L}),\kk)$ given by
\begin{align*}
\eta_{rr}^{ik\ell} & = d_{r}^{(i,k)}\ot d_{r}^{(\ell)} + d_{r}^{(\ell)} \ot d_{r}^{(i,k)},
\qquad\text{ for } r=1,2,\ (i,k), \in I, \ell \in L,\\
\eta_{12}^{ik\ell} &= d_{1}^{(i,k)}\ot d_{2}^{(\ell)} +  d_{2}^{(\ell)}\ot d_{1}^{(i,k)},
\qquad\text{ for all }(i,k) \in I, \ell \in L,\\
\eta_{21}^{ik\ell} &= d_{2}^{(i,k)}\ot d_{1}^{(\ell)} +  d_{1}^{(\ell)}\ot d_{2}^{(i,k)},
\qquad\text{ for all }(i,k) \in I, \ell \in L,
\end{align*}
are not $\kdm$-invariant because
\begin{align*}
f\cdot \eta_{rr}^{ik\ell} (y_{r}^{(i,k)}, x_{r}^{(\ell)}) &= f(h^{(-1)^{r}(i+\ell)}) \,
\eta_{rr}^{ik\ell} (y_{r}^{(i,k)}, x_{r}^{(\ell)}),\\
f\cdot \eta_{12}^{ik\ell} (y_{1}^{(i,k)}, x_{2}^{(\ell)}) &= f(h^{(-1)^{r}(i+\ell)}) \,
\eta_{12}^{ik\ell} (y_{r}^{(i,k)}, x_{2}^{(\ell)}),\\
f\cdot \eta_{21}^{ik\ell} (y_{2}^{(i,k)}, x_{1}^{(\ell)}) &= f(h^{(-1)^{r}(i+\ell)}) \,
\eta_{21}^{ik\ell} (y_{r}^{(i,k)}, x_{1}^{(\ell)}),
\end{align*}
and $f(h^{(-1)^{r}(i+\ell)})\neq f(1)$ for all $f \in \kk^{\dm}$ since $1\leq i,\ell < n$, see Remark \ref{rmk:kdm-inv}.
Hence, the deformation of
$\toba(M_{I,L})\#\kdm$ performed by $\pac(\tilde{\eta}_{12})$ and $\pac(\tilde{\eta})$ is isomorphic to
$\mC(\zeta_{I},\mu_{L},\nu_{L},\tau_{L})$ for some lifting data
$(\zeta_{I},\mu_{L},\nu_{L},\tau_{L})$
associated with the parameters of $\eta_{12}$ and $\eta$.

The converse  follows \textit{mutatis mutandis} using the same arguments of the last paragraph of the proof of
Lemma \ref{lem:Ailambdadef}.
\epf

We end this paper with the proof of Theorem \ref{thm:haoverkdm}.
\bigbreak

\noindent \textit{Proof of Theorem \ref{thm:haoverkdm}.}
It is clear that two algebras from different families are not isomorphic as Hopf algebras
since their infinitesimal braidings are not isomorphic as Yetter-Drinfeld modules.

Let $A$ be a finite-dimensional Hopf algebra with coradical $A_{0}\simeq \kdm$.
Then by Theorems \ref{thm:item-c} and \ref{thm:all-nichols-dm}, we have that $\gr A \simeq \toba(M)\# \kdm$,
where $M$ isomorphic either to $M_{I}$, or to
$M_{L}$, or to $M_{I,L}$, with $I\in \II$, $L\in \Lc$ and $(I,L) \in \K$, respectively.
Hence, by Lemmata \ref{lem:Ailambdadef}, \ref{lem:BLmugammadef} and \ref{lem:CIL} the result follows.
\qed


\subsection*{Acknowledgments}
We thank N. Andruskiewitsch, I. Angiono and A. Garc\'ia Iglesias for interesting
conversations.
Research of this paper was done during mutual visits of the authors to the Math.
Department at Saint Mary's University (Canada),
the Math. Department at Universidad Nacional de La Plata, and FaMAF at Universidad
Nacional de C\'ordoba (Argentina).
We thank the people of these departments for the warm hospitality.
We also wish to thank the referee for the careful reading of the paper and the detailed suggestions that helped us to improve
the exposition.

\end{document}